\numberwithin{equation}{section}
\newtheorem{Theorem}{Theorem}[section]
\newtheorem{Corollary}[Theorem]{Corollary}
\newtheorem{Lemma}[Theorem]{Lemma}
\newtheorem{Proposition}[Theorem]{Proposition}
\newtheorem{Question}[Theorem]{Question}
{\theoremstyle{definition}

\newtheorem{Example}[Theorem]{Example}

\newtheorem{Remarks}[Theorem]{Remarks}
\newtheorem{Observations}[Theorem]{Obervations}
\newtheorem{Observation}[Theorem]{Observation}
\newtheorem{Examples}[Theorem]{Examples}
\newtheorem{Comments}[Theorem]{Comments}}
\begin{document}
\allowdisplaybreaks

\newcommand{\arXivNumber}{1812.02965}

\renewcommand{\thefootnote}{}

\renewcommand{\PaperNumber}{071}

\FirstPageHeading

\ShortArticleName{Stratified Bundles on Curves}

\ArticleName{Stratified Bundles on Curves and Differential Galois\\ Groups in Positive Characteristic\footnote{This paper is a~contribution to the Special Issue on Algebraic Methods in Dynamical Systems. The full collection is available at \href{https://www.emis.de/journals/SIGMA/AMDS2018.html}{https://www.emis.de/journals/SIGMA/AMDS2018.html}}}

\Author{Marius VAN DER PUT}

\AuthorNameForHeading{M.~van der Put}

\Address{Bernoulli Institute, University of Groningen, \\ P.O.~Box 407, 9700 AG Groningen, The Netherlands}
\Email{\href{mailto:m.van.der.put@rug.nl}{m.van.der.put@rug.nl}}

\ArticleDates{Received December 10, 2018, in final form September 14, 2019; Published online September 21, 2019}

\Abstract{Stratifications and iterative differential equations are analogues in positive characteristic of complex linear differential equations. There are few explicit examples of stratifications. The main goal of this paper is to construct stratifications on projective or affine curves in positive characteristic and to determine the possibilities for their differential Galois groups. For the related ``differential Abhyankar conjecture'' we present partial answers, supplementing the literature. The tools for the construction of {\em regular singular} stratifications and the study of their differential Galois groups are $p$-adic methods and rigid analytic methods using Mumford curves and Mumford groups. These constructions produce many stratifications and differential Galois groups. In particular, some information on the tame fundamental groups of affine curves is obtained.}

\Keywords{stratified bundle; differential equations; positive characteristic; fundamental group; Mumford curve; Mumford group; differential Galois group}

\Classification{14F10; 13N10; 14G22; 14H30}

\renewcommand{\thefootnote}{\arabic{footnote}}
\setcounter{footnote}{0}

\section{Introduction and summary}\label{section1}

Let $C$ denote an algebraically closed field of characteristic $p>0$. For an irreducible smooth algebraic variety $Y$ over $C$, we write $\mathcal{D}_{Y/C}$ for the sheaf of differential operators on $Y$ (see \cite[Section~16.8.1]{EGA4}). A {\it stratified bundle} (also called a stratification) is a locally free $O_Y$-module of finite rank equipped with a compatible left action by $\mathcal{D}_{Y/C}$. {\it Iterative differential modules} were introduced and studied in~\cite{Ma-vdP1}. We briefly indicate the relation with stratified bundles. See Section~\ref{section2.1} for details.

Consider the case $\dim Y=1$. The field $C(Y)$ is provided with a {\it higher derivation} $\big\{\partial^{(n)}\big\}_{n\geq 0}$ such that $\partial^{(n)}\neq 0$ for all $n$. Then $C(Y)\big[\big\{\partial^{(n)}\big\}_{n\geq 0}\big]$ is the algebra of the $C$-linear differential operators on~$C(Y)$. An {\it iterative differential module}~$N$ is a finite-dimensional $C(Y)$-vector space equipped with a left action $\partial_N^{(n)}$ of $\partial^{(n)}$ for all $n\geq 0$ satisfying the rules corresponding to the statement that $N$ is a left $C(Y)\big[\big\{\partial^{(n)}\big\}_{n\geq 0}\big]$-module.

\looseness=-1 Let $M$ be a stratification on $Y$. The generic fiber $M_\eta$ is a finite-dimensional $C(Y)$-vector space with a left action of the algebra of $C$-linear differential operators of~$C(Y)$. Thus $M_\eta$ is an iterative differential module. Moreover one can reconstruct $M$ from $M_\eta$ by considering the regular points for $M_\eta$. In order to avoid pathologies we {\it require} that any iterative differential module $N$ over $C(Y)$ has the form $M_\eta$ for a stratification $M$ on some (non-empty) open subset of $Y$.

The study of stratified bundles started with D.~Gieseker's paper \cite{Gie} inspired by N.~Katz. This has led to work by H.~Esnault, V.~Metha, L.~Kindler, J.P.P.~dos~Santos, I.~Biswas et al., see for instance \cite{Sa, Es,Es-M,K,Ki}. Iterative differential modules are studied by B.H.~Matzat, A.~Maurischat, S.~Ernst et al., see for instance \cite{Er, Ma-vdP2,R}.

In the complex context, the {\it Riemann--Hilbert correspondence} is the relation between representations of the topological fundamental group and connections. For a smooth irreducible algebraic variety $Y$ over $C$ and a point $y_0\in Y(C)$, there is an \'etale fundamental group $\pi^{\rm et}_1(Y,y_0)$ and a tame fundamental group $\pi_1^{\rm tame}(Y,y_0)$. The latter is a canonical quotient of $\pi^{\rm et}_1(Y,y_0)$. Both groups are ``too small'' for describing the stratified bundles on $Y$. Let $\operatorname{Strat}(Y)$ denote the category of the stratified bundles on $Y$. The point $y_0\in Y(C)$ induces a fiber functor and this provides $\operatorname{Strat}(Y)$
with the structure of a neutral Tannakian category, equivalent to the category of the representations of an algebraic group scheme $\pi^{\rm str}(Y,y_0)$, called the {\it stratified fundamental group}. The group scheme $\pi^{\rm str}(Y,y_0)$ is an analogue of the algebraic hull $\pi_1^{\rm top}(Y)^{\rm hull}$ of the topological fundamental group in the complex case. The group of the connected components of $\pi^{\rm str}(Y,y_0)$ is $\pi^{\rm et}_1(Y,y_0)$.

For an object $M$ of $\operatorname{Strat}(Y)$, one considers the full Tannakian subcategory $\{\{M\}\}$ generated by~$M$. Its Tannakian group is a linear algebraic group over $C$. These groups and also $\pi^{\rm str}(Y,y_0)$ are {\it reduced}, according to~\cite{Sa}.

For the case $\dim Y=1$, we consider the iterative differential module $M_\eta$ and its Tannakian category $\{\{M_\eta\}\}$. The natural functor $\{\{M\}\}\rightarrow \{\{M_\eta \}\}$ is an equivalence of Tannakian categories (see Lemma~\ref{2.*}). From Picard--Vessiot theory one obtains that the Tannakian group for~$\{\{M_\eta \}\}$ coincides with the differential Galois group (see~\cite{Ma-vdP1} and Section~\ref{section2.1}). This dif\-fe\-ren\-tial Galois group is known to be a {\it reduced} linear algebraic group over $C$. This group will also be called the differential Galois group of~$M$.

In the sequel we will only consider linear algebraic groups and affine algebraic group schemes that are {\it reduced} and we drop the term reduced.

In contrast to the complex case, it is rather difficult to produce stratified bundles. The main goal of this paper is to develop various methods for the construction of stratifications on $X\setminus S$, where~$X$ is a curve (smooth, projective, irreducible) of genus~$g$ over~$C$ and $S\subset X$ a finite set.

In almost all cases it seems impossible to determine a stratified fundamental group. The determination in \cite[Section~3]{Sa} (see Theorem~\ref{2.4}) of the stratified fundamental group of an abelian variety is an exception. Instead of determining $\pi^{\rm str}(X\setminus S)$ we try to determine the linear algebraic groups which are {\it realizable for~$(X,S)$}, i.e., which are quotients of $\pi^{\rm str}(X\setminus S)$.

 One considers for a linear algebraic group $G$ the subgroup $p(G)$ generated by all elements of~$G$ with order $p^m$ for some $m\geq 0$. This is a Zariski closed normal subgroup of~$G$. Let $S\subset X$ be a finite, non empty set. We try in Section~\ref{section2}, as in~\cite{Ma-vdP1}, to find answers for:

\begin{Question}\label{Question1} Suppose that $H$ is a linear algebraic group with $p(H)=\{1\}$. When is $H$ rea\-li\-zable for $(X,S)$?
\end{Question}

\begin{Question}\label{Question2} Is the group $G$ realizable for $(X,S)$ if this holds for $G/p(G)$?
\end{Question}

We will call Question~\ref{Question2} {\it the differential Abhyankar conjecture}. A finite group $G$ is realizable for a stratification on $X\setminus S$ if and only if~$G$ is Galois group of an \'etale covering of $X\setminus S$. Thus the above two questions for finite groups coincide with Abhyankar's well known questions. According to~\cite{SGA1}, {\it the answer to Question~{\rm \ref{Question1}} for a finite group $H$ is}:
\begin{enumerate}\itemsep=0pt
\item[(a)] {\it Let $g$ be the genus of $X$ and let $s\geq 0$ denote the minimal number of generators of $H$. Then $H$ is realizable if and only if $s\leq 2g-1+\#S$}.
\end{enumerate}
{\it For infinite groups $H$}, the condition $p(H)=\{1\}$ is equivalent to $H^o$ being a torus and the order of $H/H^o$ being prime to~$p$. Let $s\geq 0$ be the minimal number of generators of $H/H^o$. Theorem~\ref{2.1} improves the partial answers in \cite[Theorems~7.1 and~7.2]{Ma-vdP1} as follows:
\begin{enumerate}\itemsep=0pt
\item[(b)] {\it An infinite commutative $H$ is realizable if and only if $s\leq 2g-1+\#S$ except in the two cases
 $g=0$, $\#S=1$ and $g>0$, $\# S=1$, $C=\overline{\mathbb{F}_p}$ and $T_p(\operatorname{Jac}(X))=0$ where $\operatorname{Jac}(X)$ denotes the Jacobian variety of~$X$ and $T_p(\operatorname{Jac}(X))$ its $p$-adic Tate module.}
\item[(c)] {\it For an infinite non commutative group $H$ the following conditions are sufficient for reali\-za\-bility on $(X,S)$}: {\it $s\leq 2g-1+\#S$ and
 the rank of the abelian group $O(X\setminus S)^*/C^*$ is greater than or equal to $\dim H^o$}.
\end{enumerate}

{\it Question~{\rm \ref{Question2}} for finite groups $G$} was given a positive answer by M.~Raynaud (for the case $\mathbb{P}^1\setminus \{\infty \}$) and by D.~Harbater for the general case~$X\setminus S$.

{\it Question~{\rm \ref{Question2}} for connected groups $G$} has a positive answer in \cite[Corollary~7.7]{Ma-vdP1}. The proof of part~(3) of Corollary~7.7 is however rather sketchy. We note that a positive answer to Question~\ref{Question2} for {\it connected} $G$ follows at once from \cite[Theorem 9.12] {R} where a detailed proof is given of the statement:

{\it Let $G$ be a connected linear algebraic group. Then $G$ can be realized for $(X,S)$ if $\#S\geq 2$. If the Jacobian variety of~$X$ has a non-trivial $p$-torsion point or if~$G$ is unipotently generated, then $G$ can be realized if $\#S=1$.}

The literature on {\it Question~{\rm \ref{Question2}} for non connected infinite groups} $G$ is not so clear and contains mistakes. We present here and in Section~\ref{section2} what can be proved at present. The condition $G/p(G)$ is realizable over $(X,S)$ implies that $G/G^o$ is realizable over $(X,S)$ and produces a~Galois covering $\big(\tilde{X},\tilde{S}\big)\rightarrow (X,S)$ with group $G/G^o$. Above $\big(\tilde{X},\tilde{S}\big)$ one has to realize $G^o$ in a~special (equivariant) way. This is an embedding problem.

\cite[Proposition 8.7]{Ma-vdP2} states that the embedding problem has a proper solution for $X=\mathbb{P}^1$ and any~$S$. Furthermore \cite[Theorem~8.11]{Ma-vdP2} claims that Question~\ref{Question2} has a positive answer for~$\big(\mathbb{P}^1,\{\infty \}\big)$. Both assertions are wrong since they are in contradiction with a {\it negative answer to Question~{\rm \ref{Question2}}} for the infinite dihedral group $\mathbb{D}_\infty$, $(X,S)=\big(\mathbb{P}^1, \{\infty \}\big)$ and $C=\overline{\mathbb{F}_2}$ found by A.~Maurischat \cite[Theorem~9.1]{R}.

In Section~\ref{section2.3} (Proposition~\ref{2.6} and Corollary~\ref{2.7}) we explain this example and extend it to any characteristic for $\big(\mathbb{P}^1,\{\infty \}\big)$
with $C=\overline{\mathbb{F}_p}$. These ``counterexamples'' disappear if one either replaces $\{\infty \}$ by $\{0,\infty \}$ or $\overline{\mathbb{F}_p}$ by a larger field. We would like to point out that the weaker versions of \cite[Proposition~8.7, Theorems~8.8 and~8.11]{Ma-vdP2} where one allows to increase the finite set~$S$, are valid.

Essentially the only tool for handling Question~\ref{Question2} is the construction of ``projective systems'' (flat bundles, $F$-divided sheaves) on (a covering of) $X\setminus S$ which produces a stratification with the required differential Galois group, see \cite[Theorem~1.3]{Gie}, \cite[Proposition~5.1]{Ma-vdP1} and \cite[Theorem~8]{Sa}. Finally, we note that there is no complete answer for the special case of Question~\ref{Question2}:
\begin{enumerate}\itemsep=0pt
\item[]{\it Which non-connected $G$ with $p(G)=G$ are realizable for $\big(\mathbb{P}^1,\{\infty \}\big)$?}
\end{enumerate}

In Section~\ref{section3} regular singular stratifications are investigated. {\it Regular singular stratifications}~$M$ on $X\setminus S$ are defined as follows. If $S=\varnothing$, then $M$ is considered to be regular singular. For $S\neq \varnothing$ one requires that~$M$ extends to a vector bundle $M^+$ on $X$ and that for every point $s\in S$ there is an affine neighbourhood $U$ of $s$ and a local parameter $t$ at $s$ with the properties $t^n\partial _t^{(n)}M^+(U)\subset M^+(U)$ for all $n$.

This makes sense, since the restriction $M^+(U)\rightarrow M^+(U\setminus S)=M(U\setminus S)$ is injective and becomes an isomorphism after tensoring with $C(X)$ over~$O(U)$.
 The above definition is a 1-dimensional version of the definition given in~\cite{Gie} for any dimension.

The regular singular stratifications on an affine curve $X\setminus S$ form also a neutral Tannakian ca\-tegory $\operatorname{Strat}^{\rm rs}(X\setminus S)$ and produce the regular singular stratified fundamental group \mbox{$\pi^{\rm str,rs}(X\setminus S)$} which is a quotient of $\pi^{\rm str}(X\setminus S)$. The group of the components
$\pi^{\rm str,rs}(X\setminus S)/\pi^{\rm str,rs}(X\setminus S)^o$ equals the tame fundamental group $\pi_1^{\rm tame}(X\setminus S,x_0)$. See~\cite{K} and~\cite{Ki} for details.

The local theory of regular singularities and local exponents is briefly exposed in Section~\ref{section3}. In \cite{Gie} the regular singular stratifications on $\mathbb{P}^1\setminus \{0,\infty \}$ are described and this leads in Section~\ref{section3.1} to the result $\pi^{\rm str,rs}\big(\mathbb{P}^1\setminus \{0,\infty\}\big)=\operatorname{Diag}(\mathbb{Z}_p/\mathbb{Z})$. This is a very small group compared to $\pi^{\rm str}\big(\mathbb{P}^1\setminus \{0,\infty\}\big)$. Furthermore the method of ``constructing'' a stratification on $X\setminus S, S\neq \varnothing$ by producing a projective system does not seem to work for regular singular stratifications.
In the special case $\mathbb{P}^1\setminus \{0,1,\infty\}$, the only method that we know to obtain regular singular stratifications is reducing ``bounded $p$-adic differential equations'' modulo~$p$.

\looseness=-1 Theorem~\ref{3.6} in Section~\ref{section3.2} improves \cite[Theorem~8.9]{Ma-vdP1}. It produces a family of regular singular stratifications of dimension two with local exponents (up to integers) $0$, $1-\gamma || 0$, $\gamma -\alpha -\beta || \alpha$, $\beta$ at the points $0$, $1$, $\infty$ for certain triples $(\alpha , \beta , \gamma) \in \mathbb{Z}_p^3$. These can be seen as analogues of the classical hypergeometric differential equations~$_2F_1$. The corresponding differential Galois groups are (roughly speaking) the reduction modulo~$p$ of the differential Galois groups in characteristic zero.

In Section~\ref{section3.3}, Proposition~\ref{3.7} it is shown that if the group $G$ is a differential Galois group for a regular singular stratification on $X\setminus S$, then $G^o$ is generated by its maximal tori and (of course) $G/G^o$ is a quotient of the tame fundamental group. In particular one can show, by using~\cite[Corollary~16]{Sa}, that the group $\mathbb{G}_a$ is not a differential Galois group for a regular singular stratification for any $X$ and any $S$.

In Section~\ref{section4} the uniformization $\Omega \rightarrow X$ of a Mumford curve over a field $C$, complete with respect to a non trivial valuation, is used to construct stratifications on~$X$. This is a rigid analytic analogue of the complex Riemann--Hilbert correspondence. This method is also present in \cite{Sa}. One concludes that
a linear algebraic group $G$ is a differential Galois group for $X$ if $G$ is topologically generated (for the Zariski topology) by $\leq g$ elements (see Theorem~\ref{4.2}).

In Section~\ref{section5} the field $C$ is again complete with respect to a non trivial valuation. Mumford groups $\Gamma\subset {\rm PGL}_2(C)$ are introduced. These are analogues of complex triangle groups and Kleinian groups. A representation $\rho\colon \Gamma \rightarrow {\rm GL}_n(C)$ produces a stratification on $\mathbb{P}^1_C$ which has
singularities at the set of branch points $S\subset \mathbb{P}^1_C$ of $\Gamma$. The singularities are regular singular if and only if $\Gamma$ is tame, i.e., $\Gamma$ has no elements of order $p$. For a tame Mumford group the number of branch points is at least~4. In particular, these groups cannot be used to construct ``hypergeometric stratifications''. There are a few tame Mumford groups with branch points $S=\{0,1,\infty ,\lambda \}$ and $0<|\lambda |<1$. In Corollary~\ref{5.5} these groups are used to obtain the following ``approximate answer'' to the inverse problem on $\mathbb{P}_C^1\setminus S$:

{\it Let $G$ be a linear algebraic group over $C$ such that $G^o$ is generated by its maximal tori. There are linear algebraic groups $G_3\lhd G_2\lhd G_1$ with $[G_ 1:G_2]< \infty$, such that $G_1$ is the differential Galois group of a stratification on $\mathbb{P}^1_C$ with regular singularities in~$S$ and $G\cong G_2/G_3$.

{\em For the special case of finite groups, one has}: Any finite group $G$ is a subquotient of the tame fundamental group of $\mathbb{P}^1\setminus \{0,1,\lambda ,\infty \}$ over $\overline{\mathbb{F}_p(\lambda )}$ (with transcendental~$\lambda$).}

\begin{Proposition}[the inverse problem depends on the base field]\label{1.1}
Let $C_0\subset C$ be algebraically closed fields of characteristic $p>0$. Consider the curves
$Y_0=X_0\setminus S$ over $C_0$ and $Y=C\times_{C_0}Y_0$. The functor $\mathcal{F}\colon \operatorname{Strat}^{\rm rs}(Y_0)\rightarrow \operatorname{Strat}^{\rm rs}(Y)$ is defined by $A\mapsto C\otimes_{C_0}A$ for the objects and morphisms $A$ of $\operatorname{Strat}^{\rm rs}(Y_0)$.
\begin{enumerate}\itemsep=0pt
\item[$(1)$] $C\otimes _{C_0}\operatorname{Hom}(A_0,B_0)\rightarrow \operatorname{Hom}(\mathcal{F}A_0,\mathcal{F}B_0)$ is an isomorphism for all $A_0,B_0\in \operatorname{Strat}^{\rm rs}(Y_0)$.
\item[$(2)$] If $G$ is the differential Galois group of an object $A_0\in \operatorname{Strat}^{\rm rs}(Y_0)$, then $C\times_{C_0}G$ is the differential Galois group of $\mathcal{F}A_0$.
\item[$(3)$] In general, an object $A\in \operatorname{Strat}^{\rm rs}(Y)$ does not descend to $C_0$, i.e., it does not lie in the image of $\mathcal{F}$. However $A$ descends to an extension $\tilde{C}$ of countable transcendence degree over $C_0$.
\end{enumerate}
\end{Proposition}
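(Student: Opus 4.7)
The plan is to reduce (1) and (2) to standard iterative Picard--Vessiot theory at the generic fibres, and to prove (3) by a finite-presentation argument combined with an appeal to the non-realisability phenomena of Section~\ref{section2.3}. Passing to the generic fibres $M_0=(A_0)_\eta$ and $N_0=(B_0)_\eta$, which are iterative differential modules over $K_0=C_0(Y_0)$, one has
\[
\operatorname{Hom}_{\operatorname{Strat}(Y_0)}(A_0,B_0)=\bigl(M_0^\vee\otimes_{K_0}N_0\bigr)^{\nabla},
\]
and the same formula on $Y$ over $K=C(Y)$. There is a natural injection
\[
C\otimes_{C_0}\bigl(M_0^\vee\otimes N_0\bigr)^{\nabla}\hookrightarrow \bigl(M^\vee\otimes N\bigr)^{\nabla},
\]
and (1) reduces to showing that both sides have the same finite $C$-dimension. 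This is the standard stability of the dimension of the solution space of an iterative differential module under extension of the algebraically closed field of constants, a formal consequence of the existence of an iterative Picard--Vessiot ring $R_0$ for $M_0^\vee\otimes N_0$: the ring $R_0\otimes_{C_0}C$ still has constants equal to $C$ and trivialises the base-changed module.

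Part (2) then follows from (1) by Tannakian formalism. The differential Galois group $G$ sits inside $\operatorname{GL}(V_0)$, where $V_0$ is the fibre of $A_0$ at $y_0$, as the subgroup fixing every horizontal vector in every mixed tensor construction $(A_0)^{\otimes a}\otimes(A_0^\vee)^{\otimes b}$. Part (1) applied to each such tensor construction shows that the spaces of horizontal vectors base-change correctly from $C_0$ to $C$, so the same tensors cut out $G\times_{C_0}C$ inside $\operatorname{GL}(V_0)\times_{C_0}C$, which is by construction the differential Galois group of $\mathcal F A_0$.

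For part (3), the descent to a countable extension is a finite-presentation argument. Choose a finite affine cover trivialising the underlying bundle of $A$; then $A$ is specified by finitely many free modules of finite rank equipped with the countable family of matrices representing $\partial^{(n)}$, $n\geq 0$. Each matrix has entries in a finitely generated $C$-algebra, so only countably many elements of $C$ are involved. Let $\tilde C$ be the algebraic closure inside $C$ of the $C_0$-subfield generated by these elements; then $\tilde C$ has countable transcendence degree over $C_0$ and $A$ descends to $\tilde C$, the regular singular condition being preserved because it is a finite local condition at each $s\in S$. To see that descent to $C_0$ fails in general, apply (2) contrapositively: any $A\in\operatorname{Strat}^{\rm rs}(Y)$ whose differential Galois group is realisable over $C$ but not over $C_0$ cannot lie in the image of $\mathcal F$, and such examples arise by enlarging the base field so that previously non-realisable groups appear, in the spirit of Section~\ref{section2.3}.

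The main obstacle is part (1): establishing that the dimension of the horizontal morphism space is stable under extension of the field of constants in the iterative setting. Once this is in hand, part (2) is routine Tannakian bookkeeping and part (3) combines a finite-presentation observation with the non-realisability counterexamples already flagged in the paper.
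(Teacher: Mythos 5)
Your part (1) is correct, though heavier than necessary: the paper reduces to $A_0=\mathbf{1}$, writes $b\in B=C\otimes_{C_0}B_0$ as $\sum_i b_i\xi_i$ over a $C_0$-basis $\{\xi_i\}$ of $C$ and observes $\partial^{(n)}b=\sum_i\partial^{(n)}(b_i)\xi_i$, which settles surjectivity in one line without any Picard--Vessiot input (and without the complication that $C(Y)\neq C\otimes_{C_0}C_0(Y_0)$, which your generic-fibre formulation would still have to handle by clearing denominators). Your part (3) is fine; the finite-presentation argument for countable descent is a detail the paper actually leaves implicit, and your non-descent examples are the ones the paper cites (Theorem~\ref{2.4}, Propositions~\ref{2.5} and~\ref{2.6}, Corollary~\ref{2.7}).

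The genuine gap is in part (2). A closed subgroup $G\subset\operatorname{GL}(V_0)$ is \emph{not} in general the fixer of all its horizontal (i.e., $G$-fixed) vectors in all mixed tensor constructions: that fixer is the observable hull of $G$, which is strictly larger whenever $G$ is not observable. For a Borel subgroup $B\subset\operatorname{GL}_2$ (realizable as a differential Galois group here, e.g., via Theorem~\ref{4.2}), any $B$-fixed vector $w$ in a construction $W$ gives a morphism $\operatorname{GL}_2/B=\mathbb{P}^1\rightarrow W$, $g\mapsto gw$, which is constant because the source is complete; hence $W^B=W^{\operatorname{GL}_2}$ and the fixed tensors cut out $\operatorname{GL}_2$, not $B$. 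So even granting that horizontal vectors base-change correctly (part (1)), this neither characterizes $G\times_{C_0}C$ nor the differential Galois group of $\mathcal{F}A_0$ (which is likewise not ``by construction'' the fixer of its horizontal tensors). What must actually be ruled out is that some construction $\mathcal{F}T_0$ acquires a new \emph{sub-object} (a line suffices, by Chevalley) not defined over $C_0$, and full faithfulness does not do this. The paper closes exactly this gap by a direct Picard--Vessiot argument: the PV ring of $A_0$ is $C_0(Y_0)\big[\{X_{i,j}\},\frac{1}{\det}\big]/P$ with $P$ a maximal differential ideal, and one shows the ideal generated by $P$ in $C(Y)\big[\{X_{i,j}\},\frac{1}{\det}\big]$ is still maximal among differential ideals, by pulling a putative larger differential ideal back over a finitely generated $C_0$-subalgebra $R\subset C$ and specializing $R$ at a maximal ideal to contradict the maximality of $P$. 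Some such argument (equivalently, descent of sub-objects) is indispensable for (2).
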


We postpone the proof to the end of Section~\ref{section5}. We remark that a more general result is proven in \cite{Ba}. Proposition~\ref{1.1} can be seen as a version in positive characteristic of a result \cite[Proposition~1.3.2]{Ka} by O.~Gabber. We observe in connection with Proposition~\ref{1.1} that for ``large fields''~$C$, say, of infinite
transcendence degree over~$\mathbb{F}_p$, the methods of this paper produce many stratifications. However, for the base field $C_0=\overline{\mathbb{F}}_p$ it seems that
$\operatorname{Strat}^{\rm rs}(X_0\setminus S)$ has few objects.

\section{Differential Galois groups for curves}\label{section2}
\subsection{Iterative differential modules and stratifications}\label{section2.1}

We recall the definition of iterative differential module and clarify the relation with stratified bundles. Let~$C$ denote an algebraically closed field of characteristic $p>0$. Let $Y$ be a smooth irreducible curve over~$C$ (affine or not). On the function field $C(Y)$ one considers a~{\it higher derivation} $\big\{\partial ^{(n)}\big\}_{n\geq 0}$ (over~$C$). This is a set of $C$-linear maps $\partial^{(n)}\colon C(Y)\rightarrow C(Y)$ satisfying:
\begin{enumerate}\itemsep=0pt
\item[(i)] $\partial^{(0)}$ is the identity;
\item[(ii)] $\partial ^{(n)}(fg)=\sum\limits_{a+b=n}\partial^{(a)}(f)\partial^{(b)}(g)$;
\item[(iii)] $\partial ^{(n)}\circ \partial ^{(m)}={n+m\choose n}\partial ^{(n+m)}$.
\end{enumerate}
We call the higher derivation {\it good} if moreover (iv) $\partial^{(n)}\neq 0$ for all $n$. In the sequel we only consider good higher derivations.

It is not difficult to show that the left $C(Y)$-vector space of all $C$-linear differential operators on the field~$C(Y)$ has basis $\big\{\partial ^{(n)}\big\}_{n\geq 0}$.

\begin{Example} Suppose that $C(t)\subset C(Y)$ is a (finite) separable extension. A standard higher derivation $\big\{\partial^{(n)}_t\big\}$ on~$C(t)$ is defined by the formulas $\partial_t^{(m)}(t^n)={n\choose m }t^{n-m}$. This standard higher derivation extends uniquely to~$C(Y)$ and will again be denoted by $\big\{\partial^{(n)}_t\big\}$. It is a good higher derivation.
\end{Example}

An {\it iterative differential module} $M$ is, in the terminology~\cite{Ma-vdP1}, a~finite-dimensional vector space over~$C(Y)$, equipped with $C$-linear operators $\big\{\partial_M^{(n)}\colon M\rightarrow M\big\}_{n\geq 0}$ satisfying:
\begin{enumerate}\itemsep=0pt
\item[(a)] $\partial_M^{0)}$ is the identity;
\item[(b)] $\partial^{(n)}_M(fm)=\sum\limits _{a+b=n}\partial^{(a)}(f)\partial_M^{(b)}(m)$ and
\item[(c)] $\partial _M^{(n)}\circ \partial _M^{(m)}={n+m\choose n }\partial_M^{(n+m)}$.
\end{enumerate}

Equivalently, $M$ is a left module over the algebra of all $C$-linear differential operators on~$C(Y)$. In order to {\it avoid pathological examples} (see \cite[Sections~4.2 and~7]{Ma-vdP1}) we {\it require} that there is a~$C(Y)$-basis $b_1,\dots ,b_d$ of $M$ and a non-empty affine open $U\subset Y$ such that $O(U)$ is stable under all~$\partial^{(n)}$ and $O(U)b_1+\cdots +O(U)b_d$ is stable under all $\partial_M^{(n)}$. In the sequel we just write $\partial^{(n)}$ for~$\partial^{(n)}_M$.

Let $M$ be a stratified bundle on $Y$. The generic fibre $M_\eta$ of~$M$ is a finite-dimensional vector space over~$C(Y)$. The left action of $\mathcal{D}_{Y/C}$ on~$M$ induces an action of the algebra of differential operators of~$C(Y)$ over~$C$ on~$M_\eta$. This algebra has basis $\big\{\partial ^{(n)}\big\}_{n\geq 0}$ over $C(Y)$ and so $M_\eta$ is an iterative differential module over~$C(Y)$.

Suppose now that an iterative differential module $N$ over~$C(Y)$ is given. Suppose that the curve~$Y$ is complete. There is, by definition, a stratification~$M$ on some non-empty affine open $U\subset Y$ with $M_\eta \cong N$ as iterative differential modules. There is a largest open subset $V\supset U$ such that~$M$ extends there as stratification, namely the set of the regular points of~$N$.

A point $y\in Y$ is called {\it regular} (for $N$) if there exists a basis $b_1,\dots ,b_d$ of $N$ over $C(Y)$ and an affine open neighbourhood $U$ of $y$ such that $O(U)$ and $O(U)b_1+\dots +O(U)b_d$ are stable by all $\partial ^{(n)}_t$, where $t$ is a local parameter at $y$ (see \cite[Section~7 and Corollary~6.2(3)]{Ma-vdP1}).

A point $y\in Y$ is {\it regular singular} (for $N$) if (with the same notations) $O(U)$ is stable under all $\partial^{(n)}_t$ and $O(U)b_1+\dots +O(U)b_d$ is stable under all $t^n\partial_t^{(n)}$. We note that the definition in Section~\ref{section1} of regular singular for a stratification $M$ coincides with this definition for its generic fiber $M_\eta$ as iterative differential module.

A stratified bundle $M$ of rank $d$ can be more explicitly described via the corresponding iterative differential module $M_\eta$. A choice of a basis for $M_\eta$ produces a~sequence of equations $\big\{\partial ^{(n)}y=A_ny\big\}_{n\geq 0}$, where~$y$ denotes a vector of length $d$ over $C(Y)$ and the $A_n$ are $(d\times d)$-matrices with entries in~$C(Y)$.

Let $M_\eta$ be written in matrix form $\big\{ \partial^{(n)}y=A_ny\big\}_{n\geq 0}$. A {\it Picard--Vessiot field} for $M_\eta$ is an extension of fields with higher derivations $L\supset C(Y)$ such that there exists a~fundamental matrix $F\in {\rm GL}_d(L)$ (this means that $F$ is a matrix of solutions $ \partial^{(n)}F=A_nF$ for all~$n$); the field of constants of $L$ is~$C$ and~$L$ is generated over $C(Y)$ by the entries of $F$ and $\frac{1}{\det F}$. A Picard--Vessiot field exists and is unique up to isomorphism.

The {\it differential Galois group} of $M_\eta$ is the group of the $C(Y)$-linear automorphisms of $L$ respecting the higher derivation. This group has a natural structure of {\it reduced linear algebraic group over $C$} (see \cite[p.~8]{Ma-vdP1}). Furthermore the differential Galois group coincides with the Tannaka group of the category $\{\{ M_\eta \}\}$ generated by~$M_\eta$.

\begin{Lemma}\label{2.*} Let $M$ be a stratification on $Y$ and $\{\{M\}\}$ the full Tannaka subcategory of $\operatorname{Strat}(Y)$ generated by $M$. Let $\{\{M_\eta \}\}$ denote the full Tannaka subcategory of the category of the iterative differential modules over $C(Y)$, generated by the object $M_\eta$. The natural functor $\{\{M\}\}\rightarrow \{\{M_\eta \}\}$ is an equivalence of Tannaka categories.
\end{Lemma}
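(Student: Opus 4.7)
The plan is to verify that the restriction-to-generic-fibre functor $F\colon N\mapsto N_\eta$ from $\{\{M\}\}$ to $\{\{M_\eta\}\}$ is $C$-linear tensor, fully faithful, and essentially surjective. Compatibility with tensor product, dual and unit object is immediate since formation of the generic fibre commutes with these operations and the action of $\mathcal{D}_{Y/C}$ on the generic fibre is exactly the action of the algebra of $C$-linear differential operators on $C(Y)$.

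For faithfulness, any morphism $\varphi\colon N\to N'$ of stratifications is a morphism of locally free $O_Y$-modules; since $N$ is torsion-free, the canonical map $N\to N_\eta$ is injective on sections, so $\varphi$ is determined by $\varphi_\eta$. For fullness, let $\psi\colon N_\eta\to N'_\eta$ be $\mathcal{D}$-linear. Consider the graph $\Gamma_\psi\subset N_\eta\oplus N'_\eta$; it is an iterative differential submodule. Define the coherent subsheaf $\tilde\Gamma := \Gamma_\psi\cap (N\oplus N')$ inside $N_\eta\oplus N'_\eta$. Because $Y$ is a smooth curve, $\tilde\Gamma$ is locally free (coherent and torsion-free). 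The inclusion $\tilde\Gamma \hookrightarrow N\oplus N'$ is $\mathcal{D}_{Y/C}$-stable: if $s$ is a local section, the components of $\partial^{(n)}s$ lie in $N\oplus N'$ since these are stratifications and simultaneously in $\Gamma_\psi$ since $\Gamma_\psi$ is $\mathcal{D}$-stable generically. Projection onto the first factor $\tilde\Gamma \to N$ is an isomorphism generically, hence everywhere (both sides are vector bundles of the same rank with the map injective on generic fibre, and on a smooth curve an injective map of vector bundles of the same rank whose cokernel vanishes generically must be an isomorphism; if necessary one reaches the conclusion by replacing $\tilde\Gamma$ with its saturation, but the intersection construction yields saturation automatically). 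The inverse followed by the second projection gives the desired morphism $N\to N'$ extending $\psi$.

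For essential surjectivity, it suffices (by the standard Tannakian criterion, e.g., Deligne) to show that every subobject of an object in the image of $F$ lies in the image of $F$. An object $P$ of $\{\{M_\eta\}\}$ is a subquotient of some tensor construction $T(M_\eta)=M_\eta^{\otimes a}\otimes (M_\eta^\vee)^{\otimes b}$, which is $T(M)_\eta$. So, given an iterative differential submodule $P\subset T(M)_\eta$, define $\tilde P := P \cap T(M)$ inside $T(M)_\eta$. As above, $\tilde P$ is a $\mathcal{D}_{Y/C}$-stable coherent torsion-free subsheaf, hence a subbundle, and the inclusion $\tilde P\hookrightarrow T(M)$ is a morphism of stratifications; moreover $T(M)/\tilde P \hookrightarrow T(M)_\eta/P$ is torsion-free, so the quotient is a stratification too. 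Hence $\tilde P\in \{\{M\}\}$ and $\tilde P_\eta = P$. Applying this to subobjects and then to quotients yields every subquotient.

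The main obstacle is the global extension step: a priori the requirement imposed on iterative differential modules only guarantees that $P$ prolongs to a stratification on some open $V\subset Y$, which could miss points. The intersection construction $\tilde P = P\cap T(M)$ bypasses the need to analyse the regular locus of $P$ separately, because $T(M)$ is already a stratification on all of $Y$; the only technical point is the verification that intersecting an iterative differential submodule of the generic fibre with the ambient $\mathcal{D}$-stable bundle produces a $\mathcal{D}$-stable subbundle on the whole curve, which uses the smoothness of $Y$ (torsion-free coherent sheaves are locally free) and the fact that $\mathcal{D}_{Y/C}$ acts through operators whose generic fibre is the full $C$-linear differential algebra spanned by $\{\partial^{(n)}\}$.
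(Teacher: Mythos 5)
Your faithfulness argument and your treatment of essential surjectivity (intersecting an iterative differential submodule of the generic fibre with the ambient bundle, which is exactly the paper's $B:=A\cap N$) are sound. The gap is in the fullness step. You reduce to showing that the first projection $\mathrm{pr}_1\colon \tilde\Gamma\to N$ is surjective, and you justify this by the claim that an injective map of vector bundles of the same rank with generically vanishing cokernel must be an isomorphism, with saturation as a fallback. Both justifications fail: multiplication by $t$ on $O_{\mathbb{A}^1}$ is injective between bundles of the same rank and generically an isomorphism but not surjective; and saturation of $\tilde\Gamma$ inside $N\oplus N'$ does not control the projection onto the first factor. Concretely, $\tilde\Gamma=O\cdot(t,1)\subset O\oplus O$ on $\mathbb{A}^1$ has torsion-free quotient (so it is saturated), yet $\mathrm{pr}_1(\tilde\Gamma)=tO$. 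This is precisely the configuration that would arise if $\psi$ behaved like multiplication by $1/t$, i.e., if a horizontal section of $(N^*\otimes N')_\eta$ had a pole; ruling that out is the entire content of fullness, and no amount of rank counting or saturation does it.

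What is missing is the differential-algebraic input, which can be supplied in two equivalent ways. (i) Since $\mathrm{pr}_1$ is $\mathcal{D}_{Y/C}$-equivariant and $\tilde\Gamma$ is $\mathcal{D}$-stable (as you checked), the cokernel of $\mathrm{pr}_1$ is a coherent torsion sheaf with a $\mathcal{D}_{Y/C}$-action, and such a module on a smooth curve vanishes: if $T\neq 0$ is killed by $t^k$ and $v\neq 0$ satisfies $tv=0$, then $0=\partial^{(1)}(tv)=v+t\partial^{(1)}v$ gives $v\in tT$, and iterating gives $v\in t^kT=0$. (ii) The paper's route: reduce via internal Hom to $\operatorname{Hom}(\mathbf{1},B)\rightarrow\operatorname{Hom}(\mathbf{1}_\eta,B_\eta)$ and observe that for a horizontal $\xi\in B_\eta$ the set $\{s\in O(Y)\,|\,s\xi\in B\}$ is a nonzero differential ideal, hence equal to $O(Y)$. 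Either way the full divided-power structure of $\mathcal{D}_{Y/C}$ is essential (with only first-order derivations in characteristic $p$ the statement is false), so this is a genuine step and not a formality; with it inserted, your graph construction does go through.
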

\begin{proof} It is easily seen that we may restrict to the case that $Y$ is affine and the objects in the two categories are projective modules and vector spaces.

 (a) For $A,B\in \{\{M\}\}$ we have to show that $\operatorname{Hom}(A,B)\rightarrow \operatorname{Hom}(A_\eta ,B_\eta)$ is a bijection. The map is clearly injective. It suffices to consider the case $A=\bf{1}$ (the 1-dimensional trivial object). Indeed, $\operatorname{Hom}(A,B)\cong \operatorname{Hom}({\bf 1}, A^*\otimes B)$. Let $\mathcal{D}^+_{Y/C}\subset \mathcal{D}_{Y/C}$ denote the ideal of the sections $\mathcal{L}$ with $\mathcal{L}(1)=0$. We may write $(\mathcal{D}_{Y/C})_\eta=\oplus _{n\geq 0}C(Y) \partial ^{(n)}$ and $\big(\mathcal{D}^+_{Y/C}\big)_\eta=\oplus _{n\geq 1}C(Y) \partial ^{(n)}$ for a~suitable higher derivation.

Now $\operatorname{Hom}({\bf 1},B)=\big\{ b\in B\,|\, \partial ^{(n)}b=0 \ \text{for} \ n\geq 1\big\}$ and $\operatorname{Hom}({\bf 1}_\eta,B_\eta)=\big\{\xi \in B_\eta \, | \, \partial ^{(n)}\xi =0 \allowbreak \text{for} \ n\geq 1\big\}$. Let $\xi$ belong to the last group. Then $\{s\in O(Y)\, | \, s\xi \in B\}$ is a non-trivial differential ideal in~$O(Y)$ and therefore equal to~$O(Y)$. Thus $\xi$ lies in the first group and the map is surjective.

(b) In order to show that the functor is essentially surjective on objects, it suffices to show that for any object $A\in \{\{M\}\}$, any object $N\subset A_\eta$ is isomorphic to $B_\eta$ for some object $B\subset A$. Since $A\rightarrow A_\eta$ is injective we may consider $A$ as subset of $A_\eta$. Then $B:=A\cap N$ has the required property. \end{proof}

 We note that Lemma~\ref{2.*} also follows from \cite[Lemma~2.5(a)]{Ki}.

\begin{Proposition}\label{2.2} If the differential Galois group $G$ of a~stratified bundle on a curve $X$ satisfies $p(G)=\{1\}$, then every singular point is regular singular.
\end{Proposition}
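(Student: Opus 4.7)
The strategy is to reduce to a formal local statement at the singular point and exploit the structure of iterative differential modules whose differential Galois group contains no $p$-torsion. Let $s \in X$ be a singular point of the stratification $M$, choose a local parameter $t$ at $s$, and let $K := C((t))$. Form $N := M_\eta \otimes_{C(X)} K$; by faithful flatness of $C[[t]]$ over the local ring of $s$, regular singularity of $M$ at $s$ is equivalent to the existence of a $C[[t]]$-lattice in $N$ stable under all $t^n\partial_t^{(n)}$, so it suffices to prove that $N$ is regular singular over $K$.

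The extension-of-scalars functor $\{\{M_\eta\}\} \to \{\{N\}\}$ is a faithful exact tensor functor with the property that every object of $\{\{N\}\}$ is a subquotient of the image of an object of $\{\{M_\eta\}\}$. Tannakian duality then produces a closed immersion $H \hookrightarrow G$, where $H$ is the differential Galois group of $N$ over $K$; concretely, if $E$ is a Picard--Vessiot ring for $M_\eta$ then $H$ is realized as the stabilizer in $G$ of a maximal ideal of $E \otimes_{C(X)} K$. In particular $p(H) \subseteq p(G) = \{1\}$, so $H^o$ is a torus and $H/H^o$ is finite of order prime to $p$. Pick $m$ coprime to $p$ and divisible by $|H/H^o|$, and pass to the tame Kummer extension $K' := C((u))$ with $u^m = t$. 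After this base change the differential Galois group becomes contained in the torus $H^o$, and regular singularity descends through Kummer covers (an invariant $C[[u]]$-lattice averages over the cyclic Galois group to an invariant $C[[t]]$-lattice). We may therefore assume $H$ is itself a torus.

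With $H$ a torus the Tannakian category $\{\{N\}\}$ is semisimple and every simple object has rank one, so $N$ splits as a direct sum of rank-1 iterative differential submodules. It remains to show that every rank-1 iterative differential module $L$ over $C((t))$ whose Galois group is a closed subgroup of $\mathbb{G}_m$ is regular singular. By the classification of rank-1 iterative differential modules over $C((t))$ used in \cite{Ma-vdP1, Ma-vdP2} and reflected in the computation of $\pi^{\rm str,rs}(\mathbb{P}^1\setminus\{0,\infty\})$ in Section~\ref{section3.1}, such an $L$ is isomorphic to a Kummer-type module $\partial_t^{(n)} e = \binom{\lambda}{n} t^{-n} e$ for some $\lambda \in \mathbb{Z}_p$, and these are manifestly regular singular. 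The main obstacle is precisely this last step: one must exclude rank-1 iterative differential modules over $C((t))$ which fail to be regular singular yet have multiplicative Galois group. The point is that any genuinely irregular rank-1 iterative module forces additive, and hence $p$-torsion, contributions into the formal Galois closure, contradicting $p(H)=\{1\}$; this is where the hypothesis $p(G)=\{1\}$ does its work.
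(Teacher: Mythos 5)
Your opening moves are sound and agree with the paper's: you localize at the singular point, identify the local differential Galois group $H$ as a closed subgroup of $G$, deduce $p(H)=\{1\}$, and (after a tame Kummer base change killing $H/H^o$) reduce to the case of a torus, hence to rank-one iterative differential modules over $C((t))$. The reduction itself is fine. But the proof then rests entirely on the claim that every rank-one iterative differential module over $C((t))$ whose Galois group sits in $\mathbb{G}_m$ is regular singular, and you do not prove it. The two justifications you offer both fail. First, the classification you cite (direct sums of the modules $E(\alpha)$, as in Section~\ref{section3} and \cite{Ma-vdP1}) is a classification of \emph{regular singular} modules over $C((t))$; invoking it to establish regular singularity is circular. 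Second, the closing remark that ``any genuinely irregular rank-one module forces additive, hence $p$-torsion, contributions, contradicting $p(H)=\{1\}$'' cannot do any work: the differential Galois group of a rank-one module is a reduced closed subgroup of ${\rm GL}_1=\mathbb{G}_m$, and in characteristic $p$ no such subgroup contains an element of order $p$. So at the rank-one stage the hypothesis $p(G)=\{1\}$ is automatically satisfied and gives no leverage whatsoever; the dichotomy ``irregular $\Rightarrow$ unipotent part'' has nothing to bite on there. What you actually need is the unconditional statement that \emph{every} rank-one iterative differential module over $C((t))$ is isomorphic to some $E(\lambda)$, $\lambda\in\mathbb{Z}_p$. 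This is true, but requires an argument, e.g., via the associated projective system $C((t))e_0\supset C\big(\big(t^p\big)\big)e_1\supset\cdots$, $e_{n+1}=a_ne_n$: writing $a_n=c_nt^{p^nk_n}u_n\big(t^{p^n}\big)$ with $u_n$ a one-unit, the convergent products $\prod_{m\geq n}u_m\big(t^{p^m}\big)$ give a change of basis normalizing $a_n$ to $t^{p^nk_n}$, which exhibits the module as $E(\lambda)$ with $\lambda=\sum k_np^n$. Without this (or an equivalent citation) your proof has a genuine hole exactly where you flag ``the main obstacle''.

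For comparison, the paper's proof is much shorter: it embeds the local Picard--Vessiot field into the global one to get $H\subset G$, and then quotes from \cite{Ma-vdP1} the fact that an irregular singularity forces a non-trivial unipotent part in $H$; since unipotent elements have $p$-power order, this contradicts $p(G)=\{1\}$. Your route trades that single cited local fact for a structural reduction to rank one plus the rank-one classification over $C((t))$; that is a legitimate and arguably more transparent path, but the local input still has to be supplied rather than asserted.
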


\begin{proof} The global differential Galois group $G$ contains the local differential Galois group at a~singular point $s\in X$. Indeed, the Picard--Vessiot field for the stratified bundle over~$C(X)$ can be embedded into the Picard--Vessiot field of the same bundle but now over $C(X)_s$, the field of fractions of~$\widehat{O}_{X,s}$. According to~\cite{Ma-vdP1}, the local differential Galois group has a non trivial unipotent part if the singularity is irregular (i.e., not regular singular). Finally, unipotent elements have order a power of~$p$.
\end{proof}

\subsection{Question~\ref{Question1} for non-commutative infinite groups}\label{section2.2}

 For a non-commutative linear algebraic group $H$ such that $H^o$ is a torus and the order of $H/H^o$ is prime to $p$,
 we want to find {\it sufficient conditions} for $H$ to be realizable for $(X,S)$ where $X$ has genus $g$ and $S\neq \varnothing$.
 Using \cite[Lemma 5.11]{B-S} one can restrict to the case that $H$ is a semi-direct product of a torus $\mathbb{T}$ and a finite group $F$ of order prime to $p$. We note that the group $O(X\setminus S)^*/C^*$ is torsion free and finitely generated.

 \begin{Theorem}\label{2.1} Let $H$ be a semi-direct product of a torus $\mathbb{T}$ of dimension $d$ and a finite group
 $F$ generated by $s$ elements and with order prime to $p$.
 $H$ is realizable for $(X,S)$ if $s\leq 2g-1+\# S$ and
 $O(X\setminus S)^*/C^*$ has rank $\geq d$.
 \end{Theorem}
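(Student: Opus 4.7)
The plan is to realise $H = \mathbb{T}\rtimes F$ by realising the finite group $F$ as the deck group of an étale cover, realising the torus $\mathbb{T}$ via rank-one iterative differential modules attached to units equipped with $p$-adic exponents, and then gluing the two pieces by an $F$-equivariant descent.

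First, for the étale step, the hypothesis $s\le 2g-1+\#S$ together with $\gcd(|F|,p)=1$ places $F$ among the tame finite quotients of $\pi_1(X\setminus S)$ by statement~(a) recalled in the introduction (from \cite{SGA1}), producing a Galois étale cover $\pi\colon \tilde X\setminus \tilde S \to X\setminus S$ with deck group $F$. Viewed through the direct image of the structure sheaf, this is a stratified bundle on $X\setminus S$ realising $F$ as a quotient of $\pi^{\rm str}(X\setminus S)$.

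Next, for the torus step, the rank hypothesis lets us pick units $u_1,\dots,u_d \in O(X\setminus S)^*$ whose classes in $O(X\setminus S)^*/C^*$ are $\mathbb{Z}$-linearly independent. For $p$-adic exponents $\alpha_1,\dots,\alpha_d\in \mathbb{Z}_p$ chosen so that $1,\alpha_1,\dots,\alpha_d$ are $\mathbb{Z}$-linearly independent, the rank-one iterative differential modules $L_i$ with ``formal solution'' $u_i^{\alpha_i}$, together with their tensor products and duals, generate a Tannakian subcategory of $\operatorname{Strat}(X\setminus S)$ whose Tannakian group is the full torus $\mathbb{T}=\mathbb{G}_m^d$, along the lines recalled in Section~\ref{section2.1}.

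To assemble $H$, fix a faithful rational representation $\rho\colon H\hookrightarrow \mathrm{GL}(V)$ and decompose $V|_{\mathbb{T}}=\bigoplus_{\chi\in\Omega}V_\chi$ along an $F$-stable finite subset $\Omega\subset X^*(\mathbb{T})$. Over $\tilde X\setminus\tilde S$ form
\[
\tilde M := \bigoplus_{\chi\in\Omega}\, L_\chi\otimes_C V_\chi,
\]
where $L_\chi$ is the rank-one module with formal solution $\prod_i(\pi^*u_i)^{\chi_i\alpha_i}$. The $F$-action on $V$ through $\rho$ permutes the weight spaces $V_\chi$ according to the $F$-action on $X^*(\mathbb{T})$; combined with the $F$-action on $\tilde X\setminus\tilde S$ this produces a descent datum on $\tilde M$, and its descent is a stratified bundle $M$ on $X\setminus S$ whose Picard--Vessiot ring carries an $H$-action, and therefore has differential Galois group $H$.

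The hard part is verifying the descent step. Since each $\pi^*u_i$ is $F$-invariant, every $L_\chi$ is $F$-invariant as an isomorphism class of stratification on $\tilde X\setminus \tilde S$, so the non-trivial $F$-action on weights has to be encoded through the spaces $V_\chi$ rather than through the rank-one factors; one must then check that the resulting $F$-equivariant structure on $\tilde M$ commutes with the full system of divided-power derivations, and that after descent the differential Galois group of $M$ is exactly $H$. The lower bound on the Galois group comes from genericity of the $\alpha_i$ (which forces the full $\mathbb{T}$) together with faithfulness of $\rho|_F$ (which forces the full $F$), while the upper bound is automatic from the construction of the Picard--Vessiot ring.
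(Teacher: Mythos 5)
Your overall architecture (a tame cover realizing $F$, rank-one modules with $p$-adic exponents realizing $\mathbb{T}$, then an $F$-equivariant descent) matches the paper's, but the torus step is set up in a way that makes the descent step fail --- and the failure is exactly at the point you flag as ``the hard part''. Because your units $u_i$ are pulled back from $X\setminus S$, each rank-one module $L_\chi$ satisfies ${}^{k}L_\chi\cong L_\chi$ for every $k\in F$, while for generic $\alpha_i$ the modules $L_\chi$ and $L_{k\chi}$ are \emph{non-isomorphic} whenever $k\chi\neq \chi$ (an isomorphism would force $\prod_i u_i^{(\chi_i-(k\chi)_i)\alpha_i}$ to be an honest rational function, contradicting independence of the $u_i$ and genericity of the $\alpha_i$). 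A descent datum $\Phi(k)$ on $\tilde M=\bigoplus_\chi L_\chi\otimes V_\chi$ must commute with all $\partial^{(n)}$, hence must respect the isotypic decomposition into the pairwise non-isomorphic rank-one constituents; its component from $L_\chi\otimes V_\chi$ to $L_{k\chi}\otimes V_{k\chi}$ would amount to an isomorphism of stratifications ${}^{k}L_\chi\to L_{k\chi}$, which does not exist. The multiplicity spaces $V_\chi$ carry no differential structure, so the $F$-action on weights cannot be ``encoded through the $V_\chi$'' as you suggest: any equivariant structure compatible with the stratification is forced to preserve each summand $L_\chi\otimes V_\chi$, and the descended bundle then has differential Galois group contained in the subgroup of $H$ stabilizing every weight space --- a proper subgroup whenever $F$ acts non-trivially on $X^*(\mathbb{T})$.

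The paper avoids this by putting the $F$-action into the rank-one factors themselves. The symbols are built from units $f_1,\dots,f_n$ on the \emph{covering} curve $\tilde X\setminus\tilde S$ that are permuted non-trivially by $F$ (in the worked example for $C_3$ acting over $\mathbb{P}^1\setminus\{0,1,\infty\}$ these are $1-u$, $1-\omega u$, $1-\omega^2u$ with $u^3=z$, cyclically permuted by $F$ and with product equal to the unit $1-z$ downstairs), and the $p$-adic exponents $a_{i,j}$ are chosen so that $F$ acts on the symbols $f_1^{a_{i,1}}\cdots f_n^{a_{i,n}}$ exactly as it acts on the characters $e_i$ of $\mathbb{T}$. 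Then the family of higher derivations $\phi_T$ is $H$-equivariant by construction, $F$ genuinely permutes the rank-one summands, and the $F$-invariants $M^F$ give the desired iterative differential module over $C(X)$ with group $H$. The missing ingredient in your argument is therefore the choice of an $F$-orbit of units upstairs (rather than $F$-invariant units pulled back from downstairs) together with an equivariant choice of the $p$-adic exponents; without it the descent datum you need simply does not exist.
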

 \begin{proof} The proof is rather involved. We start by working out the details for an explicit example. After that we sketch the proof for the general case.

{\bf An example.} $(X,S)=\big(\mathbb{P}^1,\{0,1,\infty \}\big)$, $p\neq 3$, $H$ is the semi-direct product of the torus~$\mathbb{T}$ with character group $\mathbb{Z}^2$ and $C_3$. A~generator of $C_3$ is called $\rho$ and it acts on the character group~$\mathbb{Z}^2$ by the matrix ${0\ -1\choose 1\ -1 }$.

First we construct a field extension $K$ over $C(z)$ on which the group $H$ acts in a natural way. This field will be the Picard--Vessiot field of the, to be constructed, stratified bundle. We choose $K$ to be $C(u)(e_1,e_2)$, where $u^3=z$. Fix a $\omega \in C$ with $\omega ^3=1$, $\omega \neq 1$ The extension $C(u)\subset C(u)(e_1,e_2)$ is, by definition, purely transcendental. The group of automorphisms of~$K/C(z)$ that we consider is generated by $\rho$ with $\rho (u)=\omega u$, $\rho (e_1)=e_2$, $\rho(e_2)=e_1^{-1}e_2^{-1}$ and, for $t=(t_1,t_2)\in \mathbb{T}=(C^*)^2$, by the automorphism $\sigma_t(u)=u$, $\sigma_t(e_1)=t_1e_1$, $ \sigma_t(e_2)=t_2e_2$. One sees that this group is indeed the above~$H$.

Now we want to define on $K$ suitable higher derivations, extending the canonical higher derivations on $C(z)$. As explained in \cite[Section~2.1]{Ma-vdP1}, this we do by producing a suitable $C$-linear homomorphism $\phi=\phi _T\colon K\rightarrow K[[T]]$ which has the properties:
\begin{enumerate}\itemsep=0pt
\item[(a)] $\phi_T (a)\equiv a \ \text{mod}\ (T)$ for all $a\in K$,
\item[(b)] $\phi _{S+T}=\phi_T \circ \phi_S$,
\item[(c)] $\phi_T(z)=z+T$,
\item[(d)] $\phi_T$ is equivariant for the action of $H$.
\end{enumerate}
Property (c) says that the higher derivations on $K$ extend the canonical higher derivations on~$C(z)$. Because $u^3=z$ one has $\phi(u)=u\big(1+z^{-1}T\big)^{1/3}$. We think of $e_1$ as the symbolic expres\-sion $(1-u)^{a_0}(1-\omega u)^{a_1}\big(1-\omega ^2u\big)^{a_2}$ with (still to be chosen) $a_0,a_1,a_2\in \mathbb{Z}_p$ with $a_0+a_1+a_2=0$. Then $e_2=\rho e_1$ should ``symbolically'' be $(1-\omega u)^{a_0}\big(1-\omega^2 u\big)^{a_1}(1-u)^{a_2}$.

We complete the definition of $\phi=\phi_T$ by sending $e_1$ to the expression
\begin{gather*}
(1-\phi(u))^{a_0}(1-\omega \phi(u))^{a_1}\big(1-\omega^2\phi(u)\big)^{a_2}.
\end{gather*} This means that we define $\phi (e_1)$ by
 \begin{gather*} \phi (e_1)=e_1\cdot \frac{(1-\phi(u))^{a_0}(1-\omega \phi(u))^{a_1}\big(1-\omega^2\phi(u)\big)^{a_2}}{(1-u)^{a_0}(1-\omega
 u)^{a_1}\big(1-\omega^2u\big)^{a_2}}.\end{gather*}
The formula for $\phi(e_2)$ is similar. Now $\phi_T$ is well defined and satisfies~(a) and~(c). Property~(b) is clear for the case that all $a_0$, $a_1$, $a_2$ are integers. Since $\mathbb{Z}$ is dense in~$\mathbb{Z}_p$, (b) holds for all $a_0$, $a_1$, $a_2$. By construction~(d) holds.

The $H$-invariant vector space $M=C(u)e_1+C(u)e_2+C(u)e_1^{-1}e_2^{-1}$ is a three-dimensional iterative differential module over $C(u)$. The formulas for $\frac{\phi e_i}{e_i},\ i=1,2$ show that the singular points are $u\in \big\{0,\infty, 1,\omega, \omega^2\big\}$.

For generic $a_0$, $a_1$, $a_2$ with $a_0+a_1+a_2=0$ (a sufficient condition is that $a_1$, $a_2$, $1$ are linearly independent over~$\mathbb{Q}$) its Picard--Vessiot field is
$K$. Let $N$ be $M^{\langle \rho \rangle}:=\{m\in M\, | \, \rho(m)=m\}$. This is an iterative module over~$C(z)$ of dimension three. It has again $K$ as Picard--Vessiot field and its differential Galois group is~$H$. The singularities are the points $u$ with $u=1,\omega, \omega^2, 0, \infty$. Over $C(z)$ the singular points are $0$, $1$, $\infty$.

{\bf The general case.} Suppose that $(X,S)$ and $H$, a semi-direct product of a torus $\mathbb{T}$ and a~finite group~$F$ with order prime to $p$ are given. The prime-to-$p$ fundamental group of $X\setminus S$ is, by assumption, large enough for the existence of a surjective homomorphism to~$F$. There results a Galois covering $\big(\tilde{X},\tilde{S}\big)\rightarrow (X,S)$ with group $F$. The function fields of $X$ and $\tilde{X}$ are called \mbox{$K_1\subset K_2$}. Let $\dim \mathbb{T}=d$ and consider the purely transcendental extension $K=K_2(e_1,\dots ,e_d)$ of $K_2$. We interpret $e_1,\dots ,e_d$ as a basis of the characters of~$\mathbb{T}$ and let~$F$ act on the multiplicative group $e_1^\mathbb{Z} \cdots e_d^\mathbb{Z}$ as given by the semi-direct product. This defines an action of $F$ on~$K$. Furthermore $\mathbb{T}$ acts trivially on $K_2$ and acts on $e_1,\dots, e_d$ according to their identification with characters. In this way one obtains an action of~$H$ on $K$. The field $K$ with the group $H$ will be the Picard--Vessiot field of a, to be constructed, iterative module.

The next step is to define a suitable $C$-linear homomorphism $\phi_T\colon K\rightarrow K[[T]]$ which provides~$K$ with higher derivations. For this we need enough independent invertible elements $f_1,\dots ,f_n$ (modulo $C^*$) in the ring of regular functions of $\tilde{X}\setminus \tilde{S}$. By the assumption on the rank of $O(X\setminus S)^*/C^*$, these elements exist.

Then the transcendental elements $e_i$ are seen as symbols $f_1^{a_{i,1}}\cdots f_n^{a_{i,n}}$ where the $a_{i,j}\in \mathbb{Z}_p$ are chosen such that $F$ acts on the symbols
in the same way that $F$ acts on the characters $\{e_i\}$.

Let the finite, $F$-invariant set $E\subset \mathbb{T}$ generate~$\mathbb{T}$. Then $M=\oplus _{e\in E}K_2e$ is an iterative differential submodule of $K$. For a generic choice of the $a_{i,j}$, its Picard--Vessiot field is~$K$. The singularities of $M$ are contained in~$\tilde{S}$. Then $N=M^F=\{m\in M\, | \, \rho(m)=m \ \text{for all} \ \rho \in F\}$,
is an iterative differential module over $K_1$ and has the required properties. \end{proof}

\begin{Remarks}\label{2.3} If $C=\overline{\mathbb{F}_p}$, then $O(X\setminus S)^*/C^*$ has rank $-1+\#S$. Indeed, we may suppose that $X$ has genus $g\geq 1$. Consider the map $O(X\setminus S)^*\rightarrow \operatorname{Div}^0(S)$ which sends a function to its divisor on $X$ with support in $S$. The kernel is $C^*$ and the image $\operatorname{Prin}(S)$ consists of the principle divisors with support in~$S$. The statement follows from $\operatorname{Div}^0(S)/\operatorname{Prin}(S)\subset \operatorname{Jac}(X)(C)$ and the latter group is a torsion group.

For $C\neq \overline{\mathbb{F}_p}$, the answer to Question~\ref{Question1} for~$(X,S)$ will in general not only depend on~$\# S$ but also on the position of the points $S$. Indeed, let $E$ be any elliptic curve and $S=\{q_1,q_2\}$. Then $O(E\setminus S)^*/C^*\neq \{1\}$ if and only if there is a rational function $f$ on $E$ with divisor $n([q_1]-[q_2])$. This is equivalent to $q_2=q_1\oplus t$ (addition on $E$) where $t$ is a torsion element of~$E$. The group~$E(C)$ is not a torsion group since $C\neq \overline{\mathbb{F}_p}$.\end{Remarks}

\subsection{Question~\ref{Question2} for non-connected groups}\label{section2.3}

We will use a nice and important result of \cite[Theorem~21]{Sa}, namely the description of the stratified fundamental group of an abelian variety $A$ over $C$. We adopt the following notion and notation. Let $\mathbb{X}$ be any commutative group. Then $\operatorname{Diag}(\mathbb{X})$ denotes the commutative affine group scheme
$\operatorname{Hom}(\mathbb{X},C)$. Thus the affine ring of $\operatorname{Diag}(\mathbb{X})$ is the group algebra~$C[\mathbb{X}]$. In particular $\operatorname{Diag}\big(\mathbb{Z}^d\big)$ is the $d$-dimensional torus over $C$. The statement is:

\begin{Theorem}[\cite{Sa}]\label{2.4} The stratified fundamental group of the abelian variety $A$ is the product of~the $p$-adic Tate module $T_p(A)$ and $\operatorname{Diag}(\mathbb{X})$. Here $\mathbb{X}$ is the projective limit of $A^*\overset{[p]}{\leftarrow}A^*\overset{[p]}{\leftarrow}A^*\overset{[p]}{\leftarrow}\cdots$ and~$A^*$ denotes the group of the $C$-points of the dual abelian variety, seen as an ``abstract group''. Finally, $[p]$ denotes the multiplication by $p$.\end{Theorem}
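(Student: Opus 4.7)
The plan is to identify $\operatorname{Strat}(A)$ as being controlled by two kinds of data: rank 1 stratifications, which I expect to give the factor $\operatorname{Diag}(\mathbb{X})$, and $p$-power étale covers of $A$, which I expect to give the factor $T_p(A)$.

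First I classify rank 1 objects. Via the $F$-divided description, a rank 1 stratification is a projective system of line bundles $(L_n)_{n\geq 0}$ on $A$ together with isomorphisms $F^{*}L_{n+1}\cong L_n$, where $F$ is absolute Frobenius. Since $F^{*}L\cong L^{\otimes p}$ for every line bundle, the classes $[L_n]\in \operatorname{Pic}(A)$ satisfy $p[L_{n+1}]=[L_n]$; because $\operatorname{NS}(A)=\operatorname{Pic}(A)/\operatorname{Pic}^0(A)$ is finitely generated (so has no nonzero infinitely $p$-divisible element), each $[L_n]$ automatically lies in $\operatorname{Pic}^0(A)=A^{*}$. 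The isomorphism classes of rank 1 stratifications are therefore in natural bijection with $\mathbb{X}$; since tensor product corresponds to addition in $\mathbb{X}$, the maximal diagonalizable quotient of $\pi^{\rm str}(A)$ is $\operatorname{Hom}(\mathbb{X},\mathbb{G}_m)=\operatorname{Diag}(\mathbb{X})$.

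Second, I need a decomposition theorem: every object of $\operatorname{Strat}(A)$ becomes, after pullback along a suitable finite étale $p$-power cover $\tilde{A}\to A$, a direct sum of rank 1 stratifications on $\tilde{A}$. The idea is that an $F$-divided bundle is numerically flat, that numerically flat bundles on an abelian variety admit a filtration with rank 1 subquotients in $\operatorname{Pic}^0$ (a Mehta--Nori / Atiyah style structure theorem), and that the $F$-divided structure combined with repeated pullback by $p$-power covers splits this filtration. This reduction is the technical core of the argument and what I expect to be the main obstacle.

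Finally, a general stratified bundle $M$ is recovered from its decomposition into rank 1 pieces on some $\tilde{A}$ together with a descent datum for $\tilde{A}\to A$, which is a torsor under an étale group of $p$-power order. Passing to the limit over $\tilde{A}$, the descent data give a morphism $\pi^{\rm str}(A)\to T_p(A)$. Because $T_p(A)$ is pro-$p$ and $C^{*}$ has no nontrivial $p$-power roots of unity in characteristic $p$, $T_p(A)$ admits no nontrivial character into $\mathbb{G}_m$, so $T_p(A)\cap \operatorname{Diag}(\mathbb{X})=\{1\}$ inside $\pi^{\rm str}(A)$. A splitting is furnished by equipping each étale cover with its tautological $F$-divided structure, and the two subgroups commute because $\pi^{\rm str}(A)$ is abelian (as both alleged factors are). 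The resulting isomorphism $\pi^{\rm str}(A)\cong T_p(A)\times \operatorname{Diag}(\mathbb{X})$ then matches the claim.
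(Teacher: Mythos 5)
The paper does not prove this statement: Theorem~\ref{2.4} is quoted verbatim from \cite[Theorem~21]{Sa}, so your attempt has to be measured against dos Santos's proof rather than anything in this text. Your first step is correct and is essentially the standard argument: for the absolute Frobenius one has $F^{*}L\cong L^{\otimes p}$, infinite $p$-divisibility in the finitely generated torsion-free group $\operatorname{NS}(A)$ forces all the classes $[L_n]$ into $\operatorname{Pic}^0(A)$, and the residual scalar ambiguity in the transition isomorphisms can be normalized away over the algebraically closed field $C$; this identifies the character group of $\pi^{\rm str}(A)$ with $\mathbb{X}$ and hence the maximal diagonalizable quotient with $\operatorname{Diag}(\mathbb{X})$.

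The genuine gap is exactly where you flag it: the decomposition in your second step is not an auxiliary technicality but carries essentially the entire content of the theorem, and you supply no argument for it. Two inputs are missing. First, the structure result actually used is not a filtration statement for general numerically flat bundles: dos Santos shows that a stratified bundle on $A$ is translation-invariant (the infinite-order connection provides parallel transport), hence homogeneous, and then invokes the classification of homogeneous bundles on abelian varieties as \emph{direct sums} $\bigoplus_i L_i\otimes U_i$ with $L_i\in\operatorname{Pic}^0(A)$ and $U_i$ unipotent; the canonicity of this direct sum decomposition is what later makes the two quotients independent and the group commutative. Second, your claim that the $F$-divided structure plus $p$-power covers ``splits the filtration'' is precisely the statement that the maximal unipotent quotient of $\pi^{\rm str}$ of a complete variety is pro-\'etale, computed via the projective limit of $H^1(A,O_A)$ under the $p$-linear map $F^{*}$; this is \cite[Corollary~16]{Sa}, a nontrivial theorem which the present paper also uses separately in the proof of Theorem~\ref{3.7}, and it is what identifies the unipotent part with $T_p(A)$. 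Without these two inputs your third step cannot close: you only obtain the two quotient maps $\pi^{\rm str}(A)\rightarrow T_p(A)$ and $\pi^{\rm str}(A)\rightarrow\operatorname{Diag}(\mathbb{X})$ (the ``tautological $F$-divided structure'' on an \'etale cover gives a quotient, not a splitting), and the assertion that the factors commute ``because $\pi^{\rm str}(A)$ is abelian'' is circular until the decomposition of an arbitrary object is established.
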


Using this, we give an alternative proof of \cite[Theorem~7.1(3)]{Ma-vdP1}, which can also be formulated as

\begin{Proposition}\label{2.5} \samepage Any torus can be realized for $(X,S)$ with $X$ of genus $g$, except for the cases:
\begin{enumerate}\itemsep=0pt
\item[$(a)$] $g=0$ and $\# S \leq 1$,
\item[$(b)$] $g>0$, $\# S\leq 1$, $T_p(\operatorname{Jac}(X))=0$ and $C=\overline{\mathbb{F}_p}$.
\end{enumerate}
 \end{Proposition}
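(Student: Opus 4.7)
The plan is to combine Theorem~\ref{2.4} on $\pi^{\rm str}(\operatorname{Jac}(X))$ with the Albanese morphism $\alpha\colon X\to J:=\operatorname{Jac}(X)$ for $g\ge 1$, and to handle $g=0$ by a direct rank-one construction. Write the target torus as $\mathbb{G}_m^d$.

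\emph{Realizability.} For $g=0$ and $\#S\ge 2$, fix a non-constant unit $z\in O(X\setminus S)^*$ and choose $a_1,\dots,a_d\in\mathbb{Z}_p$ whose classes modulo $\mathbb{Z}$ are $\mathbb{Q}$-linearly independent. The direct sum of the rank-one iterative modules $C(z)e_i$ with $\partial^{(n)}e_i=\binom{a_i}{n}z^{-n}e_i$ has Picard--Vessiot ring $C(z)[z^{\pm a_1},\dots,z^{\pm a_d}]$, hence differential Galois group $\mathbb{G}_m^d$. For $g\ge 1$, Theorem~\ref{2.4} gives $\pi^{\rm str}(J)=T_p(J)\times\operatorname{Diag}(\mathbb{X})$ with $\mathbb{X}=\varprojlim_{[p]}J^*(C)$. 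Outside the exceptional setting of~(b) the group $\mathbb{X}$ has infinite $\mathbb{Z}$-rank: if $C\ne\overline{\mathbb{F}_p}$ a free subgroup of $J^*(C)$ of any finite rank lifts to $\mathbb{X}$ by surjectivity of $[p]$ on $J^*(C)$, while if $C=\overline{\mathbb{F}_p}$ and $T_p(J)\ne 0$ the $p$-primary component of $\mathbb{X}$ is a non-trivial $\mathbb{Q}_p$-vector space. Hence $\operatorname{Diag}(\mathbb{X})$ surjects onto $\mathbb{G}_m^d$, producing a stratification $N$ on $J$ with Galois group $\mathbb{G}_m^d$; since $\alpha(X)$ generates $J$ as an algebraic group, $\alpha^*N$ keeps this Galois group, and its restriction to $X\setminus S$ realizes $\mathbb{G}_m^d$ for any $\#S\ge 0$.

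\emph{Non-realizability.} In case~(a), $\pi^{\rm str}(\mathbb{P}^1)=1$ is classical; on $\mathbb{A}^1$, a torus-Galois stratification would by Proposition~\ref{2.2} be regular singular at $\infty$, and the absence of non-constant units on $\mathbb{A}^1$ together with an analysis in the spirit of Section~\ref{section3.1} forces the exponent at $\infty$ to vanish, whence the Galois group is trivial. In case~(b), a torus-Galois stratification $M$ on $X\setminus S$ with $\#S\le 1$ has $p(G)=1$ and is regular singular at $S$ by Proposition~\ref{2.2}. Its Tannakian subcategory is pro-toric and decomposes into rank-one objects, which are classified by $\varprojlim_{[p]}\operatorname{Pic}(X\setminus S)$. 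For $\#S=0$ only the degree-zero part survives $F$-divisibility, yielding $\varprojlim_{[p]}J(C)=\mathbb{X}$; for $\#S=1$ the canonical isomorphism $\operatorname{Pic}(X\setminus\{s\})\cong J(C)$ (killing the class $(j_s,1)\in J(C)\oplus\mathbb{Z}=\operatorname{Pic}(X)$) gives the same answer. When $C=\overline{\mathbb{F}_p}$ and $T_p(J)=0$, $J(C)$ is prime-to-$p$ torsion and $[p]$ acts invertibly, so $\mathbb{X}$ is prime-to-$p$ torsion; then $\operatorname{Diag}(\mathbb{X})$ is pro-finite of order prime to $p$ and admits no non-trivial torus as quotient, ruling out $\mathbb{G}_m^d$ for $d\ge 1$.

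The main difficulty is two-fold. On the realizability side I need to check that $\alpha^*$ preserves differential Galois groups, which is the Tannakian statement $\pi^{\rm str}(X)\twoheadrightarrow\pi^{\rm str}(J)$; this will follow from $\alpha(X)$ generating $J$ together with the fact that $\alpha^*$ is injective on the character group of rank-one $F$-divided objects (under self-duality of $J$ the pullback $\alpha^*\colon\mathbb{X}_J\to\mathbb{X}_X$ is the identity). On the non-realizability side in case~(b), the more delicate step is the identification of rank-one regular singular stratifications on $X\setminus\{s\}$ with the $F$-divided Picard data on $X\setminus\{s\}$, in parallel with the computation of $\pi^{\rm str,rs}(\mathbb{P}^1\setminus\{0,\infty\})=\operatorname{Diag}(\mathbb{Z}_p/\mathbb{Z})$ recalled in Section~\ref{section3.1}.
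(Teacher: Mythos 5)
Your overall strategy---Theorem~\ref{2.4} plus the Albanese map for $g\geq 1$, a direct sum of rank-one objects $E(a_i)$ for $g=0$, and reduction of the exceptional cases to the torsion structure of $\mathbb{X}$---is essentially the paper's route, and the non-realizability half is sound (the paper phrases your $\#S=1$ step as ``a $1$-dimensional projective system for $(X,\{s\})$ extends to one for $(X,\varnothing)$'', which is the same Picard-group computation you sketch; and your appeal to Proposition~\ref{2.2} plus the triviality of regular singular stratifications on $\mathbb{P}^1\setminus S$, $\#S\leq 1$, replaces the paper's elementary observation that all units of $C[t]$ are constants). However, there is a genuine gap on the realizability side: the case $g>0$, $\#S\geq 2$, $C=\overline{\mathbb{F}_p}$ and $T_p(\operatorname{Jac}(X))=0$ is \emph{not} among the exceptions (a), (b), so the torus must be realized there, yet neither of your two mechanisms applies. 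Your Jacobian argument needs $\mathbb{X}$ to contain a free subgroup of rank $d$, but in this case $J^*(C)$ is prime-to-$p$ torsion, $[p]$ acts bijectively, $\mathbb{X}\cong J^*(C)$ is torsion, and $\operatorname{Diag}(\mathbb{X})$ has no non-trivial torus quotient---exactly the computation you use for non-realizability. Your $g=0$ construction does not apply either.

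The paper fills this case with a third device: since $\operatorname{Jac}(X)\big(\overline{\mathbb{F}_p}\big)$ is a torsion group, for two points $q_1,q_2\in S$ some multiple $n([q_1]-[q_2])$ is principal, so $O(X\setminus S)^*$ contains a non-constant unit $t$; pulling back $E(\alpha_1)\oplus\cdots\oplus E(\alpha_d)$ along the resulting morphism $t\colon X\rightarrow \mathbb{P}^1$ (which sends $S$ into $\{0,\infty\}$) produces a regular singular stratification on $X\setminus S$ with differential Galois group $\mathbb{G}_m^d$. You need to add this, or an equivalent construction from units of $O(X\setminus S)$, to cover the missing case. A smaller remark: the ``delicate step'' you flag in case (b)---identifying rank-one stratifications on $X\setminus\{s\}$ with projective systems of line bundles---is handled most cheaply the paper's way, by choosing degree-zero extensions of the line bundles to $X$ and noting that the transition isomorphisms then extend automatically, which reduces everything to the complete curve and Theorem~\ref{2.4}.
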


\begin{proof} (a) For $(\mathbb{P}^1,\{0,\infty \})$, the example $E(\alpha)$, $\alpha \in \mathbb{Z}_p$, $\alpha \not \in \mathbb{Q}$ (see Section~\ref{section3} for details) has differential Galois group~$\mathbb{G}_m$. It now suffices to show that~$\mathbb{G}_m$ cannot be realized for $\big(\mathbb{P}^1,\{\infty \}\big)$.

The equivalence between the categories of stratification and of projective systems (see for instance \cite[Theorem~8]{Sa}) applied to $\big(\mathbb{P}^1,\{\infty \}\big)$ translates a~1-dimensional stratification into a~projective system $C[t]e_0\supset C[t]^pe_1\supset C[t]^{p^2}e_2\supset \cdots$ and for all $n$, one has $e_{n+1}=a_n e_n$ with~$a_n$ invertible in $C[t]^{p^n}$. Since all $a_n\in C^*$, one may suppose that all $a_n=1$. Then the system is trivial and the differential Galois group is~$\{1\}$.

(b) Let $X$ have genus $g\geq 1$. Suppose $\mathbb{G}_m$ is not realizable for~$(X,S)$, then the same holds for $(X,\varnothing)$. The abelianized stratified fundamental group of~$X$ equals the stratified fundamental group of the Jacobian variety $\operatorname{Jac}(X)$ of $X$. By Theorem~\ref{2.4}, $\mathbb{G}_m$ cannot be realized if and only if $\mathbb{X}$ is a torsion group. The latter is equivalent to $T_p(\operatorname{Jac}(X))=0$ and $C=\overline{\mathbb{F}_p}$.

Consider now the case $\#S\geq 2$. Then $O(X\setminus S)$ contains a non-constant invertible element $t$ since we may assume that $\operatorname{Jac}(X)(C)$ is a torsion group. The pullback of $E(\alpha )$ on~$\big(\mathbb{P}^1,\{0,\infty \}\big)$ under the morphism $t\colon X\rightarrow \mathbb{P}^1$ produces a stratification on~$(X,S)$ with differential Galois group~$\mathbb{G}_m$.

Finally, consider a point $s\in X$. If $\mathbb{G}_m$ is realizable for $(X,\{s\})$, then the same holds for $(X,\varnothing)$. Indeed, a 1-dimensional projective system for the case $(X,\{s\})$ extends to a 1-dimen\-sional system for $(X,\varnothing)$. \end{proof}

Now we explain counterexamples, i.e., the negative answers for Question~\ref{Question2}, obtained by A.~Maurischat \cite[Theorem~9.1]{R}.
Let $G=\mathbb{T}\rtimes \mathbb{Z}/p\mathbb{Z}$, where $\mathbb{T}$ is the torus $\{(t_1,\dots,t_p)\in (C^*)^p\, | \, t_1\cdots t_p=1\}$ and for a generator $\sigma$ of $\mathbb{Z}/p\mathbb{Z}$ one has $\sigma (t_1,\dots ,t_p)\sigma^{-1}$ is the cyclic permutation $(t_2,\dots ,t_p,t_1)$ of $(t_1,\dots ,t_p)$. One easily verifies that $p(G)=G$.

\begin{Proposition} \label{2.6} $G$ cannot be realized for $\big(\mathbb{P}^1,\{\infty \}\big)$ if $C$ is the field $\overline{\mathbb{F}_p}$.
\end{Proposition}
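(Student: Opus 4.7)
The plan is to reduce the realizability of $G$ to realizing the torus $G^o=\mathbb{T}$ over the function field of an Artin--Schreier cover of $\mathbb{P}^1$ branched only at $\infty$, and then to rule this out via the Deuring--Shafarevich formula combined with Proposition~\ref{2.5}(b) over $C=\overline{\mathbb{F}_p}$.

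Assume for contradiction that $G$ is the differential Galois group of a stratification on $\mathbb{A}^1_C$, with Picard--Vessiot field $L\supset C(x)$. The fixed field $L^{G^o}$ is Galois over $C(x)$ with group $G/G^o=\mathbb{Z}/p\mathbb{Z}$ and hence corresponds to a connected \'etale cover $\pi\colon \tilde X\to \mathbb{P}^1$ unramified over $\mathbb{A}^1$. Such a cover is of Artin--Schreier type and therefore totally (wildly) ramified at $\infty$, so $\pi^{-1}(\infty)$ is a single point $\tilde\infty$. The extension $L/L^{G^o}$ is Picard--Vessiot with group $\mathbb{T}$ over the function field of $\tilde X$, exhibiting $\mathbb{T}$ as the differential Galois group of a stratification on $\tilde X\setminus\{\tilde\infty\}$; in particular, every nontrivial character of $\mathbb{T}$ realizes $\mathbb{G}_m$ for $(\tilde X,\{\tilde\infty\})$.

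Now I apply the Deuring--Shafarevich formula to the $\mathbb{Z}/p\mathbb{Z}$-cover $\pi$: with $\gamma$ denoting the $p$-rank,
$$\gamma_{\tilde X}-1 \;=\; p\bigl(\gamma_{\mathbb{P}^1}-1\bigr)+\sum_{P\in \tilde X}(e_P-1)\;=\;-p+(p-1)\;=\;-1,$$
so $\gamma_{\tilde X}=0$, equivalently $T_p(\operatorname{Jac}(\tilde X))=0$. If $\tilde X$ has genus zero this contradicts Proposition~\ref{2.5}(a); otherwise part~(b) of that proposition applies, since $C=\overline{\mathbb{F}_p}$ and the Tate module vanishes. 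Either way, $\mathbb{G}_m$ is \emph{not} realizable for $(\tilde X,\{\tilde\infty\})$, contradicting the previous step. The descent from the global $G$-realization to the $\mathbb{T}$-realization on $\tilde X\setminus\{\tilde\infty\}$ is standard Picard--Vessiot formalism; the substantive input is the Deuring--Shafarevich calculation, which forces the ordinary $p$-rank to vanish and blocks every nontrivial torus quotient from appearing on a singly-punctured Artin--Schreier cover of the affine line.
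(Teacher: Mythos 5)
Your proposal is correct and follows essentially the same route as the paper: pass to the Artin--Schreier cover defined by $G/G^o$, observe that $\mathbb{T}$ and hence $\mathbb{G}_m$ is realized on the cover punctured at the unique point over $\infty$, compute via Deuring--Shafarevich that the $p$-rank of the cover is zero, and conclude by Proposition~\ref{2.5}(a) or~(b). The only difference is that you write out the Deuring--Shafarevich computation explicitly where the paper merely cites it.
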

\begin{proof} Suppose that $G$ can be realized. Then $G/G^o$ defines a cyclic covering $h\colon Z\rightarrow \mathbb{P}^1$ of degree $p$, ramified only above $\infty$. Then $G^o=\mathbb{T}$ and then also~$\mathbb{G}_m$ are realized for $\big(Z,h^{-1}(\infty)\big)$. The equation of $Z$ has the form $s^p-s=f(z)\in C[z]$. If the genus of $Z$ is zero, then this contradicts Proposition~\ref{2.5}(a). If the genus of $Z$ is $>0$, then, according to the Deuring--Shafarevich formula, the Jacobian variety $\operatorname{Jac}(Z)$ has $p$-rank zero, see \cite[Theorem~1.1]{Sh}. Now we obtain a contradiction with Proposition~\ref{2.5}(b). \end{proof}

\begin{Corollary}\label{2.7} Let $G$ denote the group of Proposition {\rm \ref{2.6}}.
\begin{enumerate}\itemsep=0pt
\item[$(a)$] $G$ is realizable for $\big(\mathbb{P}^1,\{\infty \}\big)$ if $C\neq \overline{\mathbb{F}_p}$ and
\item[$(b)$] for $\big(\mathbb{P}^1,\{0, \infty \}\big)$ if $C=\overline{\mathbb{F}_p}$.
\end{enumerate}
 \end{Corollary}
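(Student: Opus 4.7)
In both parts the plan is the same: write $G=\mathbb{T}\rtimes \mathbb{Z}/p\mathbb{Z}$, realize the quotient $G/G^o=\mathbb{Z}/p\mathbb{Z}$ by an Artin--Schreier covering $h\colon Z\to \mathbb{P}^1$ of degree $p$ ramified only above $\infty$, construct over $C(Z)$ a $\mathbb{Z}/p\mathbb{Z}$-equivariant iterative differential module with differential Galois group $\mathbb{T}$, and finally descend by taking $\langle\sigma\rangle$-invariants to obtain an iterative differential module over $C(\mathbb{P}^1)$ with group $G$, i.e., a stratification on $(\mathbb{P}^1,S)$ realizing $G$. By Deuring--Shafarevich any such $Z$ has $p$-rank zero, so $T_p(\operatorname{Jac}(Z))=0$ regardless of the base field. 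The obstruction identified in Proposition~\ref{2.6} is the conjunction of $C=\overline{\mathbb{F}_p}$ and $\tilde S=\{\tilde\infty\}$, which jointly force the inverse system $\mathbb{X}_Z$ of Theorem~\ref{2.4} to be torsion; the two cases of the corollary remove exactly one of these conditions each.

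For part~(a), take $Z$ given by $s^p-s=z^n$ with $n\geq 2$, $\gcd(n,p)=1$ (and, if necessary, deformed by transcendental parameters from $C$), so $Z$ has genus $(p-1)(n-1)/2>0$, the deck group acts by $\sigma\colon s\mapsto s+1$, and $h^{-1}(\infty)=\{\tilde\infty\}$. By Theorem~\ref{2.4}, the abelianization of $\pi^{\rm str}(Z)$ contains $\operatorname{Diag}(\mathbb{X}_Z)$ with $\mathbb{X}_Z=\varprojlim_{[p]}\operatorname{Jac}(Z)(C)$. For $C\neq \overline{\mathbb{F}_p}$, pick a non-torsion $P\in \operatorname{Jac}(Z)(C)$ in sufficiently general position, set $Q=(1-\sigma)P$, and lift to a compatible system $\tilde Q=(Q_0,Q_1,\ldots)\in \mathbb{X}_Z$ with $Q_0=Q$ using surjectivity of $[p]$ on $\operatorname{Jac}(Z)(C)$. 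Writing $N_\sigma:=1+\sigma+\cdots+\sigma^{p-1}$, one has $N_\sigma Q=0$; and since $T_p(\operatorname{Jac}(Z))=0$, each $N_\sigma Q_n$ --- which lies in $\operatorname{Jac}(Z)[p^n]$ --- vanishes, so $N_\sigma \tilde Q=0$ in $\mathbb{X}_Z$. The rule $\chi\mapsto \tilde Q$ therefore defines a $\mathbb{Z}/p\mathbb{Z}$-equivariant homomorphism $X^*(\mathbb{T})=\mathbb{Z}[\mathbb{Z}/p\mathbb{Z}]/(N_\sigma)\to \mathbb{X}_Z$, which for generic $P$ is injective, dualizing to the desired equivariant surjection $\operatorname{Diag}(\mathbb{X}_Z)\twoheadrightarrow \mathbb{T}$. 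Pulling back along $\pi^{\rm str}(Z\setminus\{\tilde\infty\})\twoheadrightarrow \pi^{\rm str}(Z)$ and descending yields a stratification on $(\mathbb{P}^1,\{\infty\})$ with differential Galois group~$G$.

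For part~(b), take $s^p-s=z$, so $Z=\mathbb{P}^1$ with coordinate $s$ and $\tilde S=\{\tilde\infty\}\cup \mathbb{F}_p$; the deck action $\sigma(s)=s+1$ fixes $\tilde\infty$ and permutes $\mathbb{F}_p$ cyclically. Now $O(Z\setminus \tilde S)^*/C^*$ has rank $p$, generated by $\{s-i\mid i\in \mathbb{F}_p\}$ modulo the relation $\prod_i(s-i)=z$. Following the explicit construction in the proof of Theorem~\ref{2.1}, pick $\alpha\in \mathbb{Z}_p$ transcendental over $\mathbb{Q}$ and introduce $p$ symbols $F_i=(s-(i-1))^{p\alpha}\cdot z^{-\alpha}$, $i=1,\ldots,p$. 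Then $\prod_i F_i=1$ while $\sigma$ permutes the $F_i$ cyclically, so the field $K=C(s)(F_1,\ldots,F_p)$ carries an action of $\mathbb{T}\rtimes \mathbb{Z}/p\mathbb{Z}$ over $C(z)$. Extending the standard higher derivation of $C(z)$ to $K$ by the $\sigma$-equivariant homomorphism $\phi_T\colon K\to K[[T]]$ obtained by formal differentiation of the symbolic expressions (exactly as in the example of Theorem~\ref{2.1}) produces an iterative differential module $M$ over $C(s)$ with Picard--Vessiot field $K$ and differential Galois group $\mathbb{T}$. The $\langle\sigma\rangle$-invariant submodule $M^{\langle\sigma\rangle}$ is then an iterative differential module over $C(z)$ with Picard--Vessiot field $K$ and differential Galois group $G=\mathbb{T}\rtimes \mathbb{Z}/p\mathbb{Z}$.

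The main difficulty in both parts is ensuring the differential Galois group is exactly $G$ and not a proper subgroup. In part~(a) this amounts to arranging that the $\mathbb{Z}[\mathbb{Z}/p\mathbb{Z}]$-annihilator of $P$ is zero, so that $\tilde Q$ generates a free $\mathbb{Z}[\mathbb{Z}/p\mathbb{Z}]/(N_\sigma)$-submodule of $\mathbb{X}_Z$; this in turn requires finding, for the given $C\neq\overline{\mathbb{F}_p}$, an Artin--Schreier cover $Z$ whose Jacobian has a rank-$p$ orbit in $\operatorname{Jac}(Z)(C)\otimes \mathbb{Q}$ under the deck group, which one arranges by letting the equation of $Z$ depend on suitably many transcendental elements of $C$. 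In part~(b) the genericity of $\alpha$ is a measure-zero condition on the exponent lattice in $\mathbb{Z}_p^{p-1}$ and so is immediate.
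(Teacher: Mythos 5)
Your overall architecture (realize $\mathbb{Z}/p\mathbb{Z}$ by an Artin--Schreier cover $h\colon Z\rightarrow \mathbb{P}^1$ ramified only above $\infty$, construct a $\sigma$-equivariant realization of $\mathbb{T}$ upstairs, descend) is the paper's. In part~(a) you follow the paper's proof closely, but the justification of the key injectivity step contains a genuine error: you propose to arrange that the $\mathbb{Z}[\mathbb{Z}/p\mathbb{Z}]$-annihilator of $P$ is zero, equivalently that the deck-group orbit of $P$ spans a rank-$p$ subgroup of $\operatorname{Jac}(Z)(C)\otimes \mathbb{Q}$. This is unachievable: for \emph{every} degree-zero class $\xi$ on $Z$ the norm $N_\sigma \xi=\xi +\sigma \xi +\cdots +\sigma ^{p-1}\xi$ is pulled back from $Z/\langle \sigma \rangle =\mathbb{P}^1$ and hence vanishes, so $(N_\sigma)\subseteq \operatorname{Ann}(\xi)$ always and no deformation of the curve by transcendental parameters can produce a rank-$p$ orbit. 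Fortunately no genericity is needed: since $N_\sigma$ kills $\operatorname{Jac}(Z)(C)$ and $1+T+\cdots +T^{p-1}$ is irreducible over $\mathbb{Q}$, the annihilator of \emph{any} non-torsion class is exactly $(N_\sigma)$ (if $f(\sigma)\xi =0$ with $f\notin (N_\sigma)$, then $f(\zeta_p)\neq 0$, so the ideal $(f,N_\sigma)$ contains a nonzero integer and $\xi$ is torsion). This is precisely the paper's argument; your $Q=(1-\sigma)P$ is non-torsion whenever $P$ is, because $\operatorname{Jac}(Z)^{\sigma}\subseteq \operatorname{Jac}(Z)[p]$, so your construction goes through once this justification replaces the genericity claim. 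Your treatment of the lift to $\mathbb{X}_Z$ (each $N_\sigma Q_n$ lies in $\operatorname{Jac}(Z)[p^n](C)=0$ by $p$-rank zero) is correct and slightly more careful than the paper's.

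Part~(b) is correct but takes a genuinely different route. The paper keeps the positive-genus cover of Proposition~\ref{2.6}, takes a function $t$ with divisor $\ell([0_1]-[0_2])$ supported on $h^{-1}(0)$, builds a rank-one projective system with sparse exponents realizing $\mathbb{G}_m$, and descends $M\oplus M^*$ (written out only for $p=2$). You instead pass to the genus-zero cover $s^p-s=z$ and run the explicit symbolic-exponent construction from the proof of Theorem~\ref{2.1} on the units $s-i$, $i\in \mathbb{F}_p$: the relation $\prod_i(s-i)=z$ makes $\prod_iF_i=1$ hold exactly, the exponent of each $F_i$ at $\tilde{\infty}$ is $0$ so the descended module is singular only in $\{0,\infty\}$, and for $\alpha \notin \mathbb{Q}$ the only character of $\mathbb{G}_m^p$ annihilating all exponent vectors is the norm character, so the group is exactly the $(p-1)$-dimensional torus $\mathbb{T}$. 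This realizes $\mathbb{T}$ equivariantly and uniformly in $p$, which is arguably cleaner than the paper's $\mathbb{G}_m$-plus-dual device.
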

\begin{proof} (a) Consider a $p$-cyclic Galois covering $h\colon Z\rightarrow \mathbb{P}^1$, only ramified above $\infty$ and with genus $>0$. Let $\xi \in \operatorname{Jac}(Z)(C)$ be an element of infinite
order, which exists since $C\neq \overline{\mathbb{F}_p}$. Let $\xi$ be represented by a divisor
$D$ of degree 0 on $Z$. Then $D+\sigma (D)+\cdots +\sigma^{p-1}(D)$ is an invariant divisor
on $Z$ and is trivial since $Z/\langle \sigma \rangle=\mathbb{P}^1$. Thus
$\xi+\sigma (\xi)+\cdots +\sigma^{p-1}(\xi)$ is the zero element of $\operatorname{Jac}(Z)$.
 Since the polynomial $1+T+\cdots +T^{p-1}$ is irreducible over $\mathbb{Q}$ and $\xi$
 has infinite order we conclude that the `only' integral relation between the elements $\xi, \sigma (\xi),\dots ,\sigma^{p-1}(\xi)$ is $\xi+\sigma (\xi)+\cdots +\sigma^{p-1}(\xi)=0$.

The character group $A$ of $\mathbb{T}$ is $\{(a_1,\dots ,a_p) \in \mathbb{Z}^p\, | \, a_1+\cdots +a_p=0\}$. The morphism $m\colon (a_1,\dots ,a_p)\in A\mapsto \sum\limits_{i=1}^pa_i\sigma^{i-1}(\xi)\in \mathbb{X}$ yields a surjective, equivariant homomorphism $\operatorname{Diag}(\mathbb{X})\allowbreak \rightarrow \mathbb{T}$. This corresponds to a stratified bundle~$M$ on $Z$ having differential Galois group~$\mathbb{T}$. Since~$m$ is equivariant for the action of $\sigma$, the stratification $M$ is $\langle \sigma \rangle$-equivariant one has that $M=h^*N$ for a stratification~$N$ on $\mathbb{P}^1$ with only a singularity at~$\infty$. By construction, $G$ is the differential Galois group of~$N$ (more details in Proposition~\ref{2.8}).

(b)~For convenience we suppose $p=2$. Consider the cyclic covering $m\colon Z\rightarrow \mathbb{P}^1$ with group $\{1,\rho \}$ in the proof of Proposition~\ref{2.6}. Write
 $m^{-1}\{0\}=\{0_1,0_2\}$. The divisor class of $[0_1]-[0_2]$ has finite order $\ell \geq 1$. Take an element $t\in C(Z)$ with $\operatorname{div}(t)=\ell ([0_1]-[0_2])$. We normalize $t$ such that $\rho (t)=t^{-1}$. Let $R=O\big(Z\setminus \big\{0_1,0_2,m^{-1}(\infty )\big\}\big)$. We now produce a~projective system which defines a stratification on $Z\setminus \big\{0_1,0_2,m^{-1}(\infty )\big\}$ with differential Galois group $\mathbb{G}_m$ by the method of \cite[Theorem~7.3, Lemma~7.4]{Ma-vdP1}.

 This projective system reads $Re_0\supset R^pe_1\supset R^{p^2}e_2\supset \cdots$,
 where, for all $n\geq 0$, one has $e_{n+1}=a_ne_n$, $a_n= \big(t^{p^n}\big)^{b_n}$ for certain $b_n\in \{0,1\}$. We require
 that infinitely often $b_n=1$ and that there are ``large gaps'' where $b_n=0$. These conditions ensure that
 the corresponding 1-dimensional differential module $M$ over $C(Z)$ has differential Galois group $\mathbb{G}_m$.
 Moreover $\rho$ applied to $M$ produces the dual (inverse) $M^*$ of $M$. It follows that
 $M\oplus M^*$ is $\langle \sigma \rangle$-equivariant. Then there is a stratification $N$
 on $\mathbb{P}^1$ with $h^*N=M\oplus M^*$. The stratification $N$ has the required
 properties (more details in Proposition~\ref{2.8}).
 \end{proof}

{\it We describe, correct and supplement methods and results of} \cite[Section~8]{Ma-vdP2}, \cite[Section~8]{Ma} {\it and}~\cite{Er} {\it concerning the non-connected case for Question~{\rm \ref{Question2}}.}

Question~\ref{Question2} has a translation into the following {\it embedding problem}: Suppose that $G/p(G)$ is realizable for $(X,S)$. We denote the finite group $G/G^o$ by $K$. The canonical map $G\rightarrow K$ induces a surjective map $G/p(G)\rightarrow K/p(K)$. Then also $K/p(K)$ is realizable for $(X,S)$. Since Question~\ref{Question2} has a positive answer for finite groups, $K$ is realizable for $(X,S)$. This leads to an embedding problem for the exact sequence $1\rightarrow G^o\rightarrow G\rightarrow K\rightarrow 1$. Now we describe how to obtain a proper solution. As explained in the beginning of Section~\ref{section2.2} we may and will restrict ourselves to the case that $G\subset {\rm GL}(V)$ is a semi-direct product of $K$ and $G^o$.

A finite Galois covering $h\colon \big(\tilde{X},\tilde{S}\big)\rightarrow (X,S)$ with group~$K$ and where $\tilde{S}:=h^{-1}(S)$, is given. The embedding problem has a proper solution if one can produce on~$\tilde{X}\setminus \tilde{S}$ a stratification~$M$, with differential Galois group $G^o$ and equivariant for the action of the group $K$ on the covering. {\it In Proposition~{\rm \ref{2.8}} we will show that the existence of $M$ implies that $G$ is realizable for $(X,S)$.}

{\it We define the $K$-equivariance of $M$ as follows}. For every $k\in K$ a $k$-semi-linear isomorphism $\Phi (k)\colon M\rightarrow M$ is given such that
$\Phi (k_1k_2)= \Phi(k_1)\circ \Phi(k_2)$ for all $k_1,k_2\in K$. This condition on~$\Phi (k)$ means that~$\Phi(k)$ is an automorphism of the abelian sheaf~$M$, commutes with the action of the differential operators $h^{-1}\mathcal{D}_{X}$ and $\Phi (k)(\lambda \cdot m)= k(\lambda) \cdot \Phi(k)(m)$ for sections $\lambda, m$ of the sheaves $O_{\tilde{X}}$ and~$M$. The $K$-equivariance for~$M$ are in fact {\it descent data} and imply that~$M=h^*N$ for a unique stratification~$N$ on~$X\setminus S$.

We note that there is {\it an alternative definition for $K$-equivariance of~$M$}. For every $k\in K$ one defines a twist $^kM$ of $M$ by~$^kM$ and~$M$ are equal as abelian sheaves and for sections~$\lambda$ and~$m$ of~$O_{\tilde{X}}$ and~$M$, the new multiplication $\lambda \star m$ on~$^kM$ is defined as $k^{-1}(\lambda )\cdot m$. Furthermore isomorphisms $\phi (k)\colon ^kM\rightarrow M$ are given such that $\phi(k_1k_2)=k_1\circ \ ^{k_1}(\phi (k_2))$ for all~$k_1$,~$k_2$.

 Let $V$ be the above vector space over $C$ of dimension $d$ with $G\subset {\rm GL}(V)$. One constructs the stratification $M$ on $\tilde{X}\setminus \tilde{S}$ by producing a~projective system
 \begin{gather*} R\otimes_CV\overset{D_0}{\leftarrow} R^p\otimes_CV\overset{D_1}{\leftarrow} R^{p^2}\otimes_CV \overset{D_2}{\leftarrow} \cdots , \end{gather*}
 where $R=O\big(\tilde{X} \setminus \tilde{S}\big)$, $D_\ell \in G^o\big(R^{p^\ell}\big)$ for all $\ell$. The Galois group $K$ acts on ${\rm GL}(R\otimes _CV)$ as follows. The choice of a basis of $V$ over $C$ gives an identification of the last group with ${\rm GL}_d(R)$.

 Any $k\in K$ acts on a matrix in ${\rm GL}_d(R)$ by its Galois action on the entries of the matrix. The equivariance condition is $k(D_\ell)=k\cdot D_\ell \cdot k^{-1}$ for all $k\in K$ and all $\ell$, where $k$, $k^{-1}$ on the right hand side of the equality are seen as elements of ${\rm GL}(V)\subset {\rm GL}(R\otimes _CV)$.

 One assumes that the following conditions on the projective system are satisfied (compare \cite[Lemma 7.4]{Ma-vdP1} for details):
\begin{enumerate}\itemsep=0pt
\item[(i)] For every $m$ the group $G^o$ is topologically generated by $\{D_\ell \, |\, \ell \geq m\}$.
\item[(ii)] The degrees of the $D_\ell$ are bounded.
\item[(iii)] There are large gaps.
\end{enumerate}

 \begin{Proposition}\label{2.8} We use the above notation. Suppose that the matrices $D_\ell$ satisfy the equivariance conditions and that $(i)$--$(iii)$ hold. Then~$G$ is the differential Galois group for a stratification on $X\setminus S$ defined by these data.
\end{Proposition}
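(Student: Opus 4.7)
My plan is to build $N$ on $X\setminus S$ in three stages: (a)~turn the projective system into a stratification $M$ on $\tilde X\setminus \tilde S$; (b)~upgrade $M$ to a $K$-equivariant stratification using the equivariance condition on the $D_\ell$; (c)~descend $M$ along the \'etale $K$-Galois cover $h$ to obtain $N$. A final step then identifies the differential Galois group of $N$ with $G$. For~(a), the standard equivalence between $F$-divided sheaves and stratifications (cf.~\cite[Theorem~8]{Sa} and \cite[Proposition~5.1]{Ma-vdP1}) turns the chain $R\otimes V\leftarrow R^p\otimes V\leftarrow \cdots$ with transitions $D_\ell$ into a rank-$d$ stratification $M$ on $\tilde X\setminus \tilde S$, and conditions~(i)--(iii), via the reasoning of \cite[Lemma~7.4]{Ma-vdP1} cited in the statement, force the differential Galois group of $M$ to equal $G^o$.

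For~(b), I define for each $k\in K$ a $k$-semi-linear endomorphism $\Phi(k)$ of $R\otimes_CV$ by $\Phi(k)(r\otimes v)=k(r)\otimes (k\cdot v)$, where $k$ acts on $V$ through the embedding $K\subset G\subset {\rm GL}(V)$. Since $K$ stabilises each subring $R^{p^\ell}$, this restricts to every level of the system, and the equivariance hypothesis $k(D_\ell)=k\,D_\ell\,k^{-1}$ is exactly the matrix identity asserting that $\Phi(k)$ intertwines the transitions $D_\ell$; the cocycle identity $\Phi(k_1k_2)=\Phi(k_1)\Phi(k_2)$ is then immediate. Compatibility of the $\Phi(k)$ with the $\mathcal{D}_{\tilde X/C}$-action follows because the higher derivations on $C(\tilde X)$ are the unique extensions of those on $C(X)$ (the extension being separable algebraic) and are therefore $K$-equivariant. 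This yields the $K$-descent datum discussed just before the statement, and~(c) follows by faithfully flat descent along $h$: there is a stratification $N$ on $X\setminus S$ with $h^*N\cong M$ as $K$-equivariant stratifications.

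For the final identification I would argue via Picard--Vessiot theory. Let $L$ be the Picard--Vessiot field of $M$ over $C(\tilde X)$, so $\mathrm{Gal}(L/C(\tilde X))=G^o$, and choose a fundamental matrix $F\in {\rm GL}_d(L)$. Using the $\Phi(k)$, each $k\in K$ extends to an automorphism $\tilde k$ of $L$ over $C(X)$ that acts on $F$ essentially as $F\mapsto k\cdot F$ with $k\in K\subset G\subset{\rm GL}(V)$ acting on the left; the equivariance $k(D_\ell)=kD_\ell k^{-1}$ ensures that $\tilde k$ respects the higher derivations. The group generated by these $\tilde k$ together with $G^o$ is precisely the semi-direct product $G=K\ltimes G^o$, with the correct conjugation action, and $L^G=C(X)$. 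Hence $L$ is also a Picard--Vessiot field for $N$ over $C(X)$, and the differential Galois group of $N$ is $G$.

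The main obstacle is this last identification: one must verify that the lifts $\tilde k$ can be chosen so that the resulting group acting on $L$ is exactly $K\ltimes G^o$, rather than some other extension of $K$ by $G^o$. The equivariance condition on the $D_\ell$, combined with the specific realisation $K\subset G\subset {\rm GL}(V)$, is engineered precisely to force this outcome, but the check requires a careful bookkeeping of how $\Phi(k)$ acts on the fundamental matrix and commutes with the $G^o$-action. The preceding stages~(a)--(c) are by now routine, using standard $F$-divided sheaf and descent machinery.
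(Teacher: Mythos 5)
Your proposal follows essentially the same route as the paper: conditions (i)--(iii) together with the adaptation of \cite[Lemma~7.4]{Ma-vdP1} give the stratification $M$ on $\tilde X\setminus \tilde S$ with group $G^o$, the equivariance of the $D_\ell$ supplies descent data and an extension of the $K$-action to the Picard--Vessiot field $P$ of $M$, and the group of differential automorphisms of $P/C(X)$ is identified with $G$. The one place the paper is more careful is exactly the step you flag as delicate: it normalizes the lift by $k(F)=k\cdot F\cdot k^{-1}$ and, instead of asserting that $P$ is a Picard--Vessiot field for $N$ itself over $C(X)$ (which would require $C\big(\tilde X\big)$ to lie in the field generated by the entries of $F$), it observes that $P$ is the Picard--Vessiot field of $N\oplus N_0$, where $N_0=O\big(\tilde X\setminus \tilde S\big)$ is viewed as a stratification on $X\setminus S$, which suffices for the statement.
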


\begin{proof} (1) Suppose that a $K$-equivariant stratification $M$ on $\tilde{X}\setminus \tilde{S}$ with group $G^o$ is given. Let $P\supset C\big(\tilde{X}\big)$ a Picard--Vessiot field for $M$ over $C\big(\tilde{X}\big)$. From the equivariance of $M$ it follows that the action of the group $K$ on $C\big(\tilde{X}\big)/C(X)$ extends to an action of $P/C(X)$ and that the group of differential automorphisms of $P/C(X)$ is~$G$. An easy way to verify this is to write~$M$ as a~system of matrix equations $\big\{\partial^{(n)}y=A_ny\big\}$ where the matrices $A_n$ have their entries in $O\big(\tilde{X}\setminus \tilde{S}\big)$. Let $F=(f_{i,j})$ be a~fundamental matrix. The field $P$ is generated over $C\big(\tilde{X}\big)$ by the~$f_{i,j}$. The equivariance condition $k(A_n)= k\cdot A_n\cdot k^{-1}$ implies that $k\cdot F\cdot k^{-1}$ is a~fundamental matrix for the system $\big\{\partial ^{(n)}y=k(A_n)y\big\}$. The proposed action of any $k\in K$ on~$P$ is by defining $k(f_{i,j})\in P$ such that $(k(f_{i,j}))=k\cdot (f_{i,j})\cdot k^{-1}$. It is easy to verify the required properties.

Let $N$ denote the stratification on $X\setminus S$ with $h^*N=M$ and let~$N_0$ be $O\big(\tilde{X}\setminus \tilde{S}\big)$ seen as stratification on $X\setminus S$. Then $P$ is the Picard--Vessiot field for $N\oplus N_0$. Indeed, a Picard--Vessiot field~$\tilde{P}$ for $N\oplus N_0$ is contained in $P$. It contains $C\big(\tilde{X}\big)$ since this is the Picard--Vessiot field for~$N_0$. Furthermore, $\tilde{P}$ contains a Picard--Vessiot field for $M=h^*N$ and thus $\tilde{P}=P$.

(2) {\it We give a sketch of the proof} that the properties (i)--(iii) imply that $G^o$ is the differential Galois group for the given projective system over $\tilde{X}\setminus \tilde{S}$. It is easy to see that \cite[Proposition~5.3]{Ma-vdP1} is valid in this more general setting. This implies that the differential Galois group $H$ is contained in~$G^o$.

If $H$ is a proper subgroup, then there is a construction of linear algebra $\operatorname{Csrt}(M)$ applied to~$M$, and a 1-dimensional object $L\subset \operatorname{Csrt}(M)$ which is invariant under $H$ but not under $G^o$. In order to obtain a contradiction one adapts \cite[Lemma~7.4]{Ma-vdP1} to the more general situation. This is done by replacing
$\mathbb{A}^1$ (or $\mathbb{A}^1\setminus \{0\}$) by $\tilde{X}\setminus \tilde{S}$ and choosing a~suitable ``degree'' function on the algebra $O\big(\tilde{X}\setminus \tilde{S}\big)$. \end{proof}

It is difficult to produce matrices $D_\ell\in G^o\big(R^{p^\ell}\big)$ with $k(D_\ell)=k\cdot D_\ell \cdot k^{-1}$. In~\cite{Ma} and~\cite{Ma-vdP2} one introduces for this purpose a~form $G^o_{\chi}$ of $C(X)\times _CG^o$ with $\chi\colon K=\operatorname{Gal}\big(C\big(\tilde{X}\big)/C(X)\big)\rightarrow \operatorname{Aut}\big(C\big(\tilde{X}\big)\otimes _CG^o\big)$ defined by: $k\in K$ is send to the map $D\mapsto k^{-1}\circ k(D)\circ k$ (see \cite[Section~11.3]{Sp} for definitions and details). For each $\ell$ there is a~form of $C(X)^{p^\ell}\times _CG^o$, similarly defined, which we denote by $G^o_{\chi , \ell}$. The required $D_\ell $ are elements of $G^o_{\chi, \ell}\big(C(X)^{p^\ell}\big)$.

Proposition \ref{2.6} is a counterexample to \cite[Proposition~8.7, Theorems~8.8 and 8.11]{Ma-vdP2}. The sketch of the proof of \cite[Proposition~8.7]{Ma-vdP2} uses $G^o_{\chi}$ which is defined over the field~$C(X)$ but, a~priori, not over $O(X\setminus S)$. Furthermore it has the correct statement that a torus over a field like~$C(t)$ is topologically generated by one element. However, for instance in the case $\big(\mathbb{P}^1,\{\infty \}\big)$ one has to consider instead $\mathbb{G}_m(C[t])=C^*$. This is a torsion group if $C=\overline{\mathbb{F}_p}$ and is not topologically finitely generated.

The methods of \cite[Section~8]{Ma} and \cite[Section~8]{Ma-vdP2} prove the {\it weaker versions of} \cite[Proposition~8.7, Theorems~8.8 and 8.11]{Ma-vdP2} where the singular locus of the to be constructed stratification is not specified. Indeed, one can verify that the proofs use finitely many rational functions which can have poles outside~$S$. Thus for a larger set $S^+\supset S$ (depending on the case) the constructions and proofs work. This has the consequence, for example:
 \begin{Proposition}\label{2.9} Any linear algebraic group $G$ over $C$ is realizable over $\big(\mathbb{P}^1,S\big)$ for some~$S$.
\end{Proposition}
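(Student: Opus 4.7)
The plan is to combine two already available inputs: the positive solution to Abhyankar's conjecture (Raynaud for $\mathbb{A}^1$, Harbater in general), which realizes the finite quotient $K=G/G^o$, and the $K$-equivariant projective-system construction of~\cite{Ma} and~\cite{Ma-vdP2}, employed in the weak form explained in the paragraph preceding the proposition where one is allowed to enlarge the singular locus. The passage from equivariant data to a stratification with differential Galois group~$G$ is supplied by Proposition~\ref{2.8}.

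Concretely, I would first set $K=G/G^o$ and, by Abhyankar--Raynaud--Harbater, choose a $K$-Galois \'etale covering $h\colon \tilde{X}\rightarrow \mathbb{P}^1$ ramified only above a finite set $S_0$. By Proposition~\ref{2.8} it then suffices to produce, for some $K$-stable enlargement $\tilde{S}\supset h^{-1}(S_0)$ with image $S=h(\tilde{S})$, a $K$-equivariant projective system
\begin{gather*}
R\otimes_CV\overset{D_0}{\leftarrow} R^p\otimes_CV\overset{D_1}{\leftarrow} R^{p^2}\otimes_CV\overset{D_2}{\leftarrow}\cdots
\end{gather*}
on $\tilde{X}\setminus \tilde{S}$, with $R=O\big(\tilde{X}\setminus \tilde{S}\big)$, whose matrices $D_\ell$ lie in the twisted forms $G^o_{\chi,\ell}\big(O\big(\mathbb{P}^1\setminus S\big)^{p^\ell}\big)$, topologically generate $G^o$, have bounded degrees, and satisfy the large-gaps condition.

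The required $D_\ell$ are provided by the constructions of~\cite[Section~8]{Ma} and~\cite[Section~8]{Ma-vdP2}. As the author observes just before the proposition, those proofs only involve finitely many rational functions on $\mathbb{P}^1$ (and their $K$-equivariant pullbacks to $\tilde{X}$); by including all of their poles (and their $K$-orbits in $\tilde{X}$) in $S$ and $\tilde{S}$, the entire construction takes place inside $O\big(\mathbb{P}^1\setminus S\big)$ and $O\big(\tilde{X}\setminus \tilde{S}\big)$. In particular, enlarging $S$ supplies non-constant invertible regular functions, which removes the torsion obstruction that produced the counterexample of Proposition~\ref{2.6} (where $\mathbb{G}_m(C[t])=C^*$ was torsion for $C=\overline{\mathbb{F}_p}$).

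The main obstacle is to secure the equivariance condition $k(D_\ell)=k\cdot D_\ell\cdot k^{-1}$ simultaneously with topological generation of the possibly non-reductive connected group~$G^o$ under the bounded-degree and large-gap constraints. Working inside the twisted forms $G^o_{\chi,\ell}$ over $C(\mathbb{P}^1)^{p^\ell}$ handles the equivariance intrinsically, while the freedom to absorb additional poles into $S$ ensures that the forms acquire enough $O\big(\mathbb{P}^1\setminus S\big)^{p^\ell}$-points to accomplish the topological generation of $G^o$; inserting trivial blocks of $D_m$'s between non-trivial ones then produces the requisite large gaps. Once such a projective system is constructed, Proposition~\ref{2.8} yields the desired stratification on $\mathbb{P}^1\setminus S$ with differential Galois group~$G$.
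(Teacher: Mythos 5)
Your proposal is correct and follows essentially the same route as the paper: realize the finite quotient $K=G/G^o$ for $\#S$ large enough, then solve the equivariant embedding problem via the weak (singular-locus-unspecified) versions of the constructions of \cite{Ma} and \cite{Ma-vdP2}, enlarging $S$ to absorb the poles of the finitely many rational functions involved, and conclude with Proposition~\ref{2.8}. The paper's own proof is just a two-line compression of this same argument, relying on the discussion immediately preceding the proposition.
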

\begin{proof} $G/G^o$ can be realized for $\big(\mathbb{P}^1,S\big)$ if $\#S$ is large enough. The embedding problem has a~proper solution after possibly enlarging $S$.
\end{proof}

A different embedding problem has been studied by S.~Ernst~\cite{Er}. Consider a linear algebraic group $G$ and a~Zariski closed (reduced) normal subgroup $N$. Suppose that~$G/N$ can be realized as differential Galois group and therefore is the automorphism group of a Picard--Vessiot extension~$E$ of an algebraic function field~$C(X)$. Then there is an extension of Picard--Vessiot fields $E'\supset E$ over $C(X)$ such that the differential Galois group of~$E'$ is~$G$.

For the case that $G/N$ is finite one recovers \cite[Proposition~8.7]{Ma-vdP2} without specifying the singular locus of the iterative differential module over $C(X)$.

\begin{Remarks} The ``differential Abhyankar conjecture'' is still open.
\begin{enumerate}\itemsep=0pt
\item[(1)] An email message of June 11, 2019 by B.H.~Matzat confirms that one has to interpret Proposition~8.7 and Theorem~8.8 of~\cite{Ma-vdP2} in such a way that increasing the set $S$ is allowed. Furthermore, it is proposed to replace \cite[Theorem~8.11]{Ma-vdP2} by the weaker statement:
 \begin{enumerate}\itemsep=0pt
 \item[]{\it $G$ is realizable for $\big(\mathbb{P}^1,\{\infty \}\big)$ if $p(G^o)=G^o$ and $p(G)=G$} (there is no proof yet).
 \end{enumerate}
\item[(2)] In case $C=\overline{\mathbb{F}_p}$ we do not know which non-connected groups $G$ with $p(G)=G$ are realizable for $\big(\mathbb{P}^1,\{\infty \}\big)$. The answer depends on the possible $p$-ranks of Galois coverings $Z\rightarrow \mathbb{P}^1$, only ramified above~$\infty$.
\item[(3)] In case $C\neq \overline{\mathbb{F}_p}$ it seems likely that Question~\ref{Question2} has a positive answer for all cases $(X,S)$.
\end{enumerate}
\end{Remarks}

\section{Regular singular stratifications}\label{section3}

In the {\it local formal case} one considers the field $C((t))$ provided with the higher derivations $\big\{\partial_t ^{(n)}\big\}$ defined by $\partial _t^{(n)}\big(t^m\big)={m\choose n}t^{m-n}$. A vector space $M$ of finite dimension over $C((t))$, provided with a stratification, say in the form of operators $\partial_M^{(n)}$ acting on $M$, is called a {\it regular singular stratification} if there is a $C[[t]]$-lattice $\Lambda \subset M$, which is invariant under all $t^n\partial_M^{(n)}$.

A {\it tame field extension} $F\supset C((t))$ is a finite field extension of degree $m$ not divisible by $p$. It is well known that $F=C((s))$ with $s^m=t$. The higher derivations of $C((t))$ extend in a unique way to explicit higher derivations on~$F$. Furthermore, for any regular singular stratification~$M$ over $C((t))$ the stratification $F\otimes _{C((t))}M$ is easily seen to be regular singular.

We consider a {\it stratification $M$ on $X$ with singularities in $S$}, i.e., $M$ is a vector bundle on $X$ and the restriction of $M$ to $X\setminus S$ is a stratification. Consider $s\in S$ with local parameter $t$. Let $\widehat{M}_s$ be the completion of the stalk of $M$ at $s$. This is a free finitely generated $\widehat{O_{X,s}}=C[[t]]$-module and $\widehat{M_s}\otimes _{C[[t]]}C((t))$ is a stratified module over~$C((t))$. If~$s$ is a regular singular point, then $\widehat{M}_s\otimes_{C[[t]]} C((t))$ is also regular singular. We will show that the converse is also true.

We note that the examples in \cite[Sections~4.2 and~7]{Ma-vdP1} are not counterexamples to Lemma~\ref{3.1} since they concern stratifications over the field~$C(X)$ that do not come from (regular) stratifications on some $X\setminus S$.

\begin{Lemma}\label{3.1} Let $X$, $M$, $S$, $s$, $t$ be as above. Suppose that $\widehat{M}_s\otimes _{C[[t]]}C((t))$ is regular singular. Then $s$ is a regular singularity for $M$.
 \end{Lemma}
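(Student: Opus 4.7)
The plan is to prove the equivalent statement for generic fibres: produce a $C(X)$-basis $b_1,\dots,b_d$ of $M_\eta$ and an affine open $U\ni s$ such that $\sum_i O(U)b_i$ is stable under all $t^n\partial_t^{(n)}$ and $O(U)$ is stable under all $\partial_t^{(n)}$. The strategy is to pick a basis of $M_\eta$ whose $C[[t]]$-span in $M_\eta\otimes_{C(X)}C((t))=\widehat{M}_s\otimes_{C[[t]]}C((t))$ is an invariant lattice, and then to spread the formal invariance at $s$ out to a common affine neighbourhood of $s$ by a pole-bookkeeping argument.

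By hypothesis there is a $C[[t]]$-lattice $\Lambda\subset M_\eta\otimes_{C(X)}C((t))$ invariant under all $t^n\partial_t^{(n)}$. Pick any $C[[t]]$-basis $e_1,\dots,e_d$ of $\Lambda$; since $C(X)$ is $t$-adically dense in $C((t))$ we may choose $b_i\in M_\eta$ with $b_i-e_i\in t\Lambda$, and Nakayama then forces $\sum_i C[[t]]b_i=\Lambda$. In particular $b_1,\dots,b_d$ is a $C(X)$-basis of $M_\eta$.

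Choose an affine $V\subset X\setminus S$ on which $M$ is trivialised by $\gamma_1,\dots,\gamma_d\in M(V)$, and let $E_c\in M_d(O(V))$ be the matrix of $\partial_t^{(c)}$ in this basis; let $A\in{\rm GL}_d(C(X))$ be the transition matrix $b=A\gamma$. Applying the Leibniz rule for higher derivations,
\begin{gather*}
H_n = t^n\sum_{a+c=n}\partial_t^{(a)}(A)\cdot E_c\cdot A^{-1}
\end{gather*}
represents $t^n\partial_t^{(n)}$ in the basis $b$. The crucial claim is that the \emph{locations} (not the orders) of the poles of the entries of $H_n$ lie in a single finite set $T_0\subset X$ independent of $n$: the only sources are the fixed pole loci of $A$, $A^{-1}$ and of $t$, the fixed set $X\setminus V$ containing the poles of every $E_c$, and the ramification locus of $t$, outside of which every $\partial_t^{(a)}$ preserves the relevant local ring. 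On the other hand, invariance of $\Lambda$ gives $H_n\in M_d(C[[t]])$, and combined with $H_n\in M_d(C(X))$ this forces $H_n\in M_d(O_{X,s})$; no entry of $H_n$ has a pole at $s$.

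Take $U:=X\setminus((T_0\cup S)\setminus\{s\})$, a nonempty open neighbourhood of $s$ which is affine (enlarging $T_0$ by one auxiliary point if necessary so that the complement is nonempty). Every entry of every $H_n$ lies in $O(U)$, so $\sum_i O(U)b_i$ is stable under all $t^n\partial_t^{(n)}$; moreover $O(U)$ is stable under all $\partial_t^{(n)}$ because $U$ avoids the poles and ramification of $t$. This verifies the definition of regular singularity at $s$. The only substantive difficulty is the uniform-in-$n$ control of the pole locations of the $H_n$: a priori each $t^n\partial_t^{(n)}$ would be regular only on an open $U_n$ that might shrink with $n$, and the point of the computation is that the rational data $A$, $E_c$, $t$ have fixed finite pole loci and each $\partial_t^{(a)}$ introduces new poles only at the (fixed) ramification set of $t$.
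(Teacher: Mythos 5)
Your proof is correct and follows essentially the same route as the paper: replace a $C[[t]]$-basis of the invariant lattice by a basis coming from $M_\eta$ via $t$-adic density, observe that the matrices of $t^n\partial_t^{(n)}$ in this basis have poles confined to a fixed finite set independent of $n$, and use the formal invariance together with rationality to rule out a pole at $s$. The only difference is presentational: where the paper invokes its standing non-pathology requirement to assert that any basis of $M_\eta$ has a finite singular locus (and phrases the conclusion via the pole orders $i(n)\leq n$), you justify the same uniform pole-location claim explicitly through the change-of-basis formula.
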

\begin{proof} We use on $X$ and $M$ the higher derivations $\big\{\partial_t^{(n)}\big\}$ and the corresponding $\big\{\partial_M^{(n)}\big\}$. As explained in Section~\ref{section2.1}, the stalk $M_\eta$ of~$M$ at the generic point of~$X$ is an iterative differential module for the action of the $\partial_M^{(n)}$. Put $d=\dim M_\eta$. It has the property that for any basis $e_1,\dots ,e_d$ of $M_\eta$ there is a finite subset $T$ of $X$ such that $O(X\setminus T)e_1\oplus \cdots \oplus O(X\setminus T)e_d$ is invariant under all $\partial_M^{(n)}$. Indeed, this follows from the assumption that $M_\eta$ comes from a~(regular) stratification on some open subset of~$X$.

By assumption $M_\eta\otimes _{C(X)}C((t))=\widehat{M}_s\otimes _{C[[t]]}C((t))$ has a $C[[t]]$-lattice which is invariant under all $t^n\partial_M^{(n)}$. Any lattice is generated by a basis of the $C((t))$-vector space $M_\eta\otimes _{C(X)}C((t))$. The elements of $M_\eta$ are dense in $M_\eta\otimes _{C(X)}C((t))$ and so we may suppose that the lattice is generated by a basis $e_1,\dots ,e_d$ of $M_\eta$. For a small enough affine neighbourhood~$U$ of~$s$, the maximal ideal in~$O(U)$ of the point~$s$ is generated by $t$ and the stratified module $N:=O(U)e_1+\cdots +O(U)e_d$ has the property that the only singularity is $s$. Thus for every $n$ there is a smallest integer $i(n)\geq 0$ such that $t^{i(n)}\partial _M^{(n)}N\subset N$. Taking the completion at~$s$ (or what is the same, taking the tensor product over $O(U)$ with $\widehat{O_{X,s}}$ ) does not change the numbers $i(n)$. By assumption $i(n)\leq n$ and so $s$ is a regular singularity. \end{proof}

We note that Lemma~\ref{3.1} is also present in \cite[Proposition 3.2.3]{K}.

Consider, as before, the field $C((t))$ with the higher derivations $\big\{\partial_t ^{(n)}\big\}$. Let~$\alpha$ be a $p$-adic integer. For any integer $n\geq 0$, the binomial coefficient ${\alpha \choose n}$ is also in $\mathbb{Z}_p$. Its reduction modulo $p$ to an element in $\mathbb{F}_p$ is denoted by the same symbol. For any $\alpha \in \mathbb{Z}_p$ we define the 1-dimensional regular singular stratified $C((t))$-vector space $E(\alpha )=C((t))e$ by $\partial ^{(n)}e={\alpha \choose n }t^{-n}e$ for all $n\geq 0$. Furthermore the stratifications $E(\alpha )$ and $E(\beta)$ are isomorphic if and only if $\alpha -\beta \in \mathbb{Z}$. Now~$C(t)e$ with the same formulas defines on $\mathbb{P}^1\setminus \{0,\infty \}$ a~regular singular stratification which will also be denoted by~$E(\alpha )$.

According to \cite[Theorem 3.3]{Gie} and \cite[Proposition 6.1]{Ma-vdP1}, any finite-dimensional regular singular stratified vector space~$M$ over~$C((t))$ is a direct sum $E(\alpha _1)\oplus \cdots \oplus E(\alpha _d)$. The elements $\alpha_1,\dots ,\alpha _d\in \mathbb{Z}_p$ are called {\it the local exponents}. Their images in $\mathbb{Z}_p/\mathbb{Z}$ are uniquely determined by~$M$. The differential Galois group of~$M$ is the group $\operatorname{Diag}(\mathbb{X})$ where $\mathbb{X}$ is the subgroup of~$\mathbb{Z}_p/\mathbb{Z}$ generated by the images of~$\alpha_1,\dots ,\alpha _d$.

A Galois (\'etale) covering $\tilde{X}\setminus \tilde{S}\rightarrow X\setminus S$ produces a stratification on~$X$. This is the finitely generated projective $O(X\setminus S)$-module $O\big(\tilde{X}\setminus \tilde{S}\big)$ provided with the left action of $\mathcal{D}(X\setminus S)$ which uniquely extends the left action on $O(X\setminus S)$ itself. A~point $s\in S$ is regular singular for the stratification if and only if the ramification is ``tame'' (i.e., the ramification index is prime to $p$). This is an easy special case of a theorem of L.~Kindler~\cite{Ki}.

 \subsection[Regular singular stratifications on $\mathbb{P}^1\setminus S$]{Regular singular stratifications on $\boldsymbol{\mathbb{P}^1\setminus S}$}\label{section3.1}

 \begin{Proposition}[Gieseker]\label{3.2} {\rm Regular singular stratifications on $\mathbb{P}^1\setminus \{0,\infty \} $}.
 \begin{enumerate}\itemsep=0pt
\item[$(1)$] $E(\alpha)$ is isomorphic to $E(\beta)$ if and only if $\alpha -\beta \in \mathbb{Z}$.
\item[$(2)$] Any regular singular stratification $($of rank~$m)$ on $\mathbb{P}^1\setminus \{0,\infty \}$ is a direct sum $\oplus_{i=1}^m E(\alpha _i)$ $($all $\alpha _i\in \mathbb{Z}_p)$ of $1$-dimensional stratifications. The images of the $\alpha_i$ in $\mathbb{Z}_p/\mathbb{Z}$ are uniquely determined by the stratification.
\item[$(3)$]The regular singular stratified fundamental group $\pi^{\rm str,rs}\big(\mathbb{P}^1\setminus \{0,\infty \},x_0\big)$ is equal to $\operatorname{Diag}(\mathbb{X})$ with
$\mathbb{X}=\mathbb{Z}_p/\mathbb{Z}$.
\end{enumerate}
 \end{Proposition}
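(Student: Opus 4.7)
For part (1) I would compute morphisms directly using the global structure. Any morphism $\phi\colon E(\alpha)\to E(\beta)$ is determined by $\phi(e_\alpha)=f\cdot e_\beta$ with $f\in O\big(\mathbb{P}^1\setminus\{0,\infty\}\big)=C[t,t^{-1}]$, and compatibility with every $\partial^{(n)}$ unfolds via the Leibniz rule to the identity $\sum_{a+b=n}\partial^{(a)}(f)\binom{\beta}{b}t^{-b}=\binom{\alpha}{n}t^{-n}f$. For a single monomial $f=t^k$ this simplifies, by the Chu--Vandermonde identity, to $\binom{k+\beta}{n}=\binom{\alpha}{n}$ for every $n\ge 0$; since the Mahler basis $\big\{\binom{\cdot}{n}\big\}_{n\ge 0}$ separates elements of $\mathbb{Z}_p$, this forces $\alpha=k+\beta$. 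Different monomials force different values of $k$, so $f$ must be supported on a single monomial, and a nonzero morphism exists precisely when $\alpha-\beta\in \mathbb{Z}$, in which case it is automatically an isomorphism.

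For part (2), my plan is to first classify the rank-one objects and then promote this to a decomposition. In rank one, an RS stratification on $\mathbb{G}_m$ is $C[t,t^{-1}]\cdot e$ with $\partial^{(n)}e=f_n e$; regular singularity at both $0$ and $\infty$ pins $f_n$ to a scalar multiple of $t^{-n}$, and the iteration identities $\partial^{(n)}\partial^{(m)}=\binom{n+m}{n}\partial^{(n+m)}$ determine the scalars to be $\binom{\alpha}{n}$ for a unique $\alpha\in \mathbb{Z}_p$. For higher rank I would invoke the already-cited formal decomposition $\widehat M_0\cong \bigoplus_i E(\alpha_i)$ and then, for each class $[\alpha]\in \mathbb{Z}_p/\mathbb{Z}$ appearing among the $\alpha_i$, analyze the isotypic functor $M\mapsto \operatorname{Hom}_{\rm strat}(E(\alpha),M)\otimes E(\alpha)$. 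The descent step is the claim that every formal horizontal embedding $E(\alpha_i)\hookrightarrow \widehat M_0$ extends uniquely to a global embedding into $M$; combined with the analogous formal decomposition at $\infty$, this yields $M\cong \bigoplus_i E(\alpha_i)$, with the $\alpha_i$ unique modulo $\mathbb{Z}$ by part (1).

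For part (3), parts (1)--(2) identify $\operatorname{Strat}^{\rm rs}\big(\mathbb{P}^1\setminus\{0,\infty\}\big)$ as the semisimple rigid neutral Tannakian category whose simple objects are $\{E(\alpha)\}_{\alpha\in \mathbb{Z}_p/\mathbb{Z}}$. The tensor product satisfies $E(\alpha)\otimes E(\beta)\cong E(\alpha+\beta)$, which is a final application of Chu--Vandermonde, so the tensor structure matches the group law on the character lattice $\mathbb{Z}_p/\mathbb{Z}$. The fiber functor at any $C$-point $x_0$ realizes this category as finite-dimensional representations of $\operatorname{Diag}(\mathbb{Z}_p/\mathbb{Z})$, yielding the claimed identification of $\pi^{\rm str,rs}$.

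The main obstacle I anticipate is the descent of formal data in part (2): the formal decomposition at $0$ alone is insufficient, because there is no a priori reason why formal idempotents in $\operatorname{End}_{\rm strat}(\widehat M_0)$ should come from $\operatorname{End}_{\rm strat}(M)$. The RS hypothesis at $\infty$ must enter crucially, since a formal horizontal section of a summand extending through $0$ could still fail to extend through $\infty$. Concretely, I would reduce the descent to computing $\operatorname{Hom}_{\rm strat}(E(\alpha),M^+)$ on the $\mathbb{P}^1$-extension $M^+$, and use the rigidity from part (1) to exclude ``Laurent tails'' at $\infty$ that are incompatible with the formal decomposition there.
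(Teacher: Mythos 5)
The paper offers no proof of parts (1) and (2): it simply refers to \cite[Section~4]{Gie} and notes that (3) ``follows easily'', so you are attempting strictly more than the text does. Your part (1) is correct and complete: the Leibniz expansion together with Chu--Vandermonde gives ${k+\beta \choose n}={\alpha \choose n}$ in $\mathbb{F}_p$ for all $n$, and Lucas' theorem (the digits of $\gamma$ are recovered from ${\gamma \choose p^i}$ mod $p$) forces $\alpha=k+\beta$; the observation that distinct monomials in $f$ would force distinct values of $k$ correctly pins $f$ to a single unit monomial. Part (3) is also fine once (1) and (2) are available: semisimplicity with simple objects indexed by $\mathbb{Z}_p/\mathbb{Z}$, scalar endomorphisms, and $E(\alpha)\otimes E(\beta)\cong E(\alpha+\beta)$ identify the category with representations of $\operatorname{Diag}(\mathbb{Z}_p/\mathbb{Z})$.

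The genuine gap is exactly where you locate it yourself, and it is not closed: the globalization of the formal decomposition in part (2). Knowing $\widehat M_0\cong\oplus_i E(\alpha_i)$ and the analogous statement at $\infty$ does not produce global horizontal maps $E(\alpha_i)\to M$: a horizontal section of $E(-\alpha_i)\otimes \widehat M_0$ is a priori only a formal Laurent series, and nothing in your sketch shows it lies in $C\big[t,t^{-1}\big]\otimes M$. Your proposed reduction to ``computing $\operatorname{Hom}_{\rm strat}(E(\alpha),M^+)$ on the $\mathbb{P}^1$-extension $M^+$'' is not well posed, since $M^+$ carries only a logarithmic action at $0$ and $\infty$, not a stratification on $\mathbb{P}^1$; and there is no argument that the two formal decompositions at $0$ and at $\infty$ are induced by a common global one. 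This globalization is precisely the nontrivial content of \cite[Section~4]{Gie} (which exploits that the operators $t^n\partial^{(n)}$ are homogeneous of degree zero for the $\mathbb{Z}$-grading of $C\big[t,t^{-1}\big]$, so that an invariant lattice decomposes into simultaneous eigenspaces). Without supplying such an argument, or citing it as the paper does, part (2) remains a plan rather than a proof, and part (3) inherits the gap.
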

 We refer to \cite[Section~4]{Gie} for a proof of (1) and (2) of Proposition~\ref{3.2}. Part (3) follows easily.

\begin{Remarks}\label{3.3}\quad
 \begin{enumerate}\itemsep=0pt
\item[(1)] From Proposition~\ref{3.2} it follows that any regular singular stratification on $\mathbb{P}^1\setminus S$ with $\#S\leq 1$ is trivial.
\item[(2)] The case $\# S=3$, i.e., the case $\mathbb{P}^1\setminus \{0,1,\infty \}$ is rather different. We have only a $p$-adic method to produce regular singular stratifications on this affine curve. This gives explicit results for the analogue of the classical hypergeometric equations. We will treat this case in Section~\ref{section3.2}.
\item[(3)] For $\# S\geq 4$ a new method for constructing regular singular stratifications, using {\it Mumford groups} will be treated in Section~\ref{section5}. Similar rigid methods produce stratifications on Mumford curves, see Section~\ref{section4}.
\item[(4)] For $X$ of genus $\geq 2$, constructions of stratified bundles will be given, using Schottky groups and Mumford curves, in Section~\ref{section4}.
\end{enumerate}
\end{Remarks}

\begin{Proposition}\label{3.4} Let $G$ be the differential Galois group of a regular singular stratification $M$ on~$\mathbb{P}^1\setminus \{a_1,\dots ,a_r,\infty \}$. Let $t_i$ denote a local coordinate at $a_i$ and let $G_i$ be the local differential Galois group for $ C((t_i))\otimes M_{a_i}$.
 \begin{enumerate}\itemsep=0pt
\item[$(1)$] There is a natural embedding $G_i\subset G$, unique up to conjugation in $G$.
\item[$(2)$] $G$ is topologically generated by all conjugates of the $G_1,\dots ,G_r$.
\end{enumerate}
\end{Proposition}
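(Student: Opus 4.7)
My plan for part (1) is the standard Picard--Vessiot comparison. Choose a coordinate $z$ on $\mathbb{P}^1$ and a Picard--Vessiot extension $L\supset C(z)$ for $M_\eta$ with group $G$, together with a Picard--Vessiot extension $L_i\supset C((t_i))$ for $C((t_i))\otimes_{C(z)}M_\eta$ with group $G_i$. Since $L_i$ contains a fundamental solution matrix $F$ of the matrix iterative system $\partial^{(n)}Y=A_nY$ describing $M_\eta$, the sub-$C(z)$-algebra of $L_i$ generated by the entries of $F$ and $(\det F)^{-1}$ is a Picard--Vessiot extension for $M_\eta$ over $C(z)$, and is therefore isomorphic to $L$. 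This produces an embedding $L\hookrightarrow L_i$ above $C(z)\hookrightarrow C((t_i))$, and restriction of automorphisms yields a homomorphism $G_i\to G$. Injectivity follows because $L_i$ is generated as a ring by the image of $L$ together with $C((t_i))$. Two choices of $F$ differ by right multiplication by an element of $G$, so the embeddings $L\hookrightarrow L_i$, and hence $G_i\hookrightarrow G$, form a single $G$-conjugacy class. This proves (1).

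For part (2), let $H\subset G$ be the Zariski closure of the abstract subgroup generated by all $gG_ig^{-1}$ for $g\in G$ and $1\le i\le r$; by construction $H$ is a closed normal subgroup of $G$. By the Tannakian correspondence, the quotient $G/H$ is the differential Galois group of the full Tannakian subcategory $\mathcal{T}\subset\{\{M\}\}$ of objects $N$ on which $H$ acts trivially; since triviality of the action is preserved under conjugation in $G$, $\mathcal{T}$ is precisely the set of $N\in\{\{M\}\}$ whose local differential Galois group at every $a_i$ is trivial. It suffices to prove $\mathcal{T}$ contains only trivial objects, for then $G/H=\{1\}$.

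So take $N\in\mathcal{T}$. At each $a_i$, the local decomposition $C((t_i))\otimes N_\eta\cong\bigoplus_j E(\alpha_{i,j})$ has all local exponents $\alpha_{i,j}\in\mathbb{Z}$, so this iterative differential module is trivial. The main technical obstacle is now to upgrade this local triviality into a non-singular extension of $N$ across each $a_i$: in a local decomposition $\hat{N}^+_{a_i}=\bigoplus_j C[[t_i]]e_j$ with $\partial^{(n)}e_j=\binom{\alpha_{i,j}}{n}t_i^{-n}e_j$, one replaces $e_j$ by $t_i^{-\alpha_{i,j}}e_j$ to obtain a new $C[[t_i]]$-lattice on which $\partial^{(n)}$ acts by zero for $n\ge 1$. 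This new lattice arises from an elementary modification of the vector bundle $N^+$ at $a_i$; the modifications at distinct points $a_i$ are independent, so they can be performed simultaneously. Gluing yields a regular singular stratification $\tilde{N}$ on $\mathbb{P}^1\setminus\{\infty\}=\mathbb{A}^1$ whose restriction to $\mathbb{P}^1\setminus\{a_1,\dots,a_r,\infty\}$ is $N$. By Remark~\ref{3.3}(1), $\tilde{N}$ is trivial, hence so is $N$. Thus $\mathcal{T}$ consists only of trivial objects, $H=G$, and $G$ is topologically generated by the conjugates of $G_1,\dots,G_r$.
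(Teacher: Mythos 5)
Your proof is correct and follows essentially the same route as the paper's: part (1) is the identical Picard--Vessiot field comparison, and part (2) is the paper's Tannakian argument (pass to the quotient by the normal closure of the local groups, observe that the resulting object has trivial local groups at the $a_i$, extend it across those points, and invoke the triviality of regular singular stratifications on $\mathbb{P}^1$ minus one point from Remarks~\ref{3.3}(1)). The only difference is that you spell out the lattice modification showing that integral local exponents let one remove the singularities, a step the paper compresses into the phrase ``the points $a_i$ are not singular.''
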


The differential Galois group of a regular singular differential equation on the {\it complex} puncture projective line is generated by the local monodromy groups (or their Zariski closures). Proposition~\ref{3.4} is a characteristic $p>0$ version of this.
\begin{proof} (1) Let $K\supset C(z):=C\big(\mathbb{P}^1\big)$ be a Picard--Vessiot field for~$M$. Let $K_i\supset C((t_i))$ be a Picard--Vessiot field for $ C((t_i))\otimes M_{a_i}$ and write $F_i$ for a fundamental matrix with entries in $K_i$ with respect to a basis of~$M$. The iterative subfield of $K_i$ generated over $C(z)$ by the entries of the $F_i$ and $\frac{1}{\det F_i}$ is a Picard--Vessiot field for~$M$ over~$C(z)$. By uniqueness of the Picard--Vessiot field there is a morphism of iterative differential fields $K\rightarrow K_i$ (extending the inclusion $C\big(\mathbb{P}^1\big)\subset C((t_i))$) which is unique up to differential automorphisms of~$K$ over~$C(z)$. From this~(1) follows.

(2) We have to show the following: If $N\subset G$ is a closed normal subgroup containing all~$G_i$, then $N=G$. Let $\{\{M\}\}$ denote Tannakian category generated by $M$. This category is equivalent to the one of the representations of~$G$. The latter contains a faithful representation of~$G/N$. Thus there is an object $T\in \{\{M\}\}$ with group~$G/N$. Then $T$ is regular singular and since $G_i$ maps to $\{1\}$ in~$G/N$, the points $a_i$ are not singular. Thus $T$ has at most a regular singularity at $\infty$. From Remarks~\ref{3.3}(1) it follows that $T=\{1\}$. \end{proof}

\subsection[Hypergeometric stratifications on $\mathbb{P}^1\setminus \{0,1,\infty \}$]{Hypergeometric stratifications on $\boldsymbol{\mathbb{P}^1\setminus \{0,1,\infty \}}$}\label{section3.2}
\begin{Proposition}\label{3.5} For elements $\alpha _0,\alpha _1,\alpha _\infty \in \mathbb{Z}_p$ with $\alpha_0+\alpha_1+\alpha_\infty =0$ there is a unique $1$-dimensional regular singular stratification on $\mathbb{P}^1\setminus \{0,1,\infty \}$ with local exponents $\alpha_0$, $\alpha_1$, $\alpha_\infty$ at the points $0$, $1$, $\infty$.
 \end{Proposition}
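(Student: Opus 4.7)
The plan is to establish existence by a tensor product construction and uniqueness via Remarks~\ref{3.3}(1).

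For existence, let $L_0$ denote the restriction of $E(\alpha_0)$ (defined on $\mathbb{P}^1 \setminus \{0,\infty\}$ just before Proposition~\ref{3.2}) to the further open subset $\mathbb{P}^1 \setminus \{0,1,\infty\}$, so that its local exponents at $0, 1, \infty$ are $\alpha_0, 0, -\alpha_0$. Let $L_1$ be the pullback of $E(\alpha_1)$ under the involution $\sigma \colon z \mapsto 1-z$ of $\mathbb{P}^1$ (which interchanges $0$ and $1$ and fixes $\infty$), restricted to $\mathbb{P}^1 \setminus \{0,1,\infty\}$; its local exponents are $0, \alpha_1, -\alpha_1$. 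The tensor product $L := L_0 \otimes L_1$ then has local exponents $\alpha_0, \alpha_1, -\alpha_0 - \alpha_1 = \alpha_\infty$, using the hypothesis $\alpha_0 + \alpha_1 + \alpha_\infty = 0$. Equivalently, one can write $L$ out explicitly as $C(z) e$ with higher derivations extending those of $C(z)$ via $\phi_T(e) = e \cdot (1 + T/z)^{\alpha_0} \cdot (1 + T/(z-1))^{\alpha_1}$, where $(1+x)^\alpha = \sum_{n \geq 0} \binom{\alpha}{n} x^n$ is the $p$-adic binomial series ($\binom{\alpha}{n} \in \mathbb{Z}_p$ reduces mod $p$ to an element of $\mathbb{F}_p \subset C$); the semigroup law $\phi_{S+T} = \phi_S \circ \phi_T$ reduces to the identity $(1 + S/z)(1 + T/(z+S)) = 1 + (S+T)/z$ raised to the exponent $\alpha_0$, and its analogue centered at $1$ raised to $\alpha_1$.

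For uniqueness, let $L$ and $L'$ be two stratifications as in the statement. Then $N := L \otimes (L')^\vee$ is a $1$-dimensional regular singular stratification on $\mathbb{P}^1 \setminus \{0,1,\infty\}$ whose local exponent at each of $0, 1, \infty$ lies in $\mathbb{Z}$. For any integer $k$, the rank-one object $E(k)$ over $C((t))$ is isomorphic to the trivial stratification via the change of generator $e \mapsto t^{-k} e$: indeed, the coefficient of $T^n$ in $(1+T)^{-k}(1+T)^k = 1$ is $\sum_{a+b=n} \binom{-k}{a}\binom{k}{b} = \delta_{n,0}$, which forces $\partial^{(n)}(t^{-k} e) = 0$ for all $n \geq 1$. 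Consequently, at each puncture of $N$ one may choose a $C[[t]]$-lattice on which all higher derivations act trivially; gluing these three local lattices to $N$ on $\mathbb{P}^1 \setminus \{0,1,\infty\}$ produces an extension $\overline{N}$ of $N$ to a stratification on all of $\mathbb{P}^1$. By Remarks~\ref{3.3}(1), $\overline{N}$ is trivial, so $L \cong L'$.

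The main subtlety is the extension step in uniqueness: a local exponent lying in $\mathbb{Z}$ is weaker than one equal to $0$, and encodes precisely the twisting datum needed to select a line bundle extension across each puncture. Once the three local twists by $t^{-k}$ are fixed, however, the glued extension is a genuine (regular) stratification on $\mathbb{P}^1$, and Remarks~\ref{3.3}(1) concludes the proof.
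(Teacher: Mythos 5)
Your proof is correct and follows essentially the same route as the paper: your $L_0\otimes L_1$ is exactly the paper's symbolic $z^{\alpha_0}(z-1)^{\alpha_1}$ (the paper defines $M(\alpha_0,\alpha_1)$ directly by the corresponding $\partial^{(n)}$-formulas, verified for integer exponents and extended by $p$-adic density, just as in your $\phi_T$ computation), and your uniqueness argument is the paper's reduction of $L\otimes (L')^{\vee}$ to a rank-one object with integer exponents, killed by Remarks~\ref{3.3}(1). The only difference is that you spell out the extension-across-the-punctures step (which can also be justified by the lattice argument in the proof of Lemma~\ref{3.1}) that the paper compresses into ``trivial because it has trivial local exponents''.
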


\begin{proof} An easy way to obtain this stratification is to consider the symbolic expression $s:=z^{\alpha_0}(z-1)^{\alpha _1}$. Then, still working symbolically, $\partial^{(n)}s=\big\{{\alpha _0\choose n }z^{-n}+{\alpha_1\choose n}(z-1)^{-n} \big\}s$. Now, formally, define the free, rank one module $M(\alpha_0,\alpha _1)=C\big[z,\frac{1}{z(z-1)}\big]b$ with the action of the operators $\partial^{(n)}$ given by $\partial^{(n)}b=\big\{{\alpha _0\choose n }z^{-n}+{\alpha_1\choose n}(z-1)^{-n} \big\}b$ for all $n$. This definition makes sense for all choices of $\alpha _0,\alpha _1\in \mathbb{Z}_p$ and defines a stratification.

Indeed, if $\alpha_0, \alpha_1\in \mathbb{Z}$, then $z^{\alpha_0}(z-1)^{\alpha_1}\in C(z)$ and the $\partial^{(n)}$ obviously satisfy the properties of a stratification. For general $\alpha_0,\alpha _1\in \mathbb{Z}_p$ one verifies each formula needed for $M(\alpha _0,\alpha_1)$ being a stratification by approximating $\alpha _0,\alpha_1$ by elements in~$\mathbb{Z}$.

If $E$ is a 1-dimensional stratification with regular singularities and local exponents $\alpha_0$, $\alpha _1$, $\alpha_\infty$, then $E^*\otimes M(\alpha _0,\alpha _1)$ is trivial because it has trivial local exponents. Thus $E\cong M(\alpha_0,\alpha_1)$. \end{proof}

Proposition \ref{3.5} and its proof extend in an obvious way to the case $\mathbb{P}^1\setminus S$ for any finite~$S$. An interesting case concerns regular singular stratifications with finite differential Galois group~$G$. These are given by a surjective homomorphism $\pi_1^{\rm tame}\big(\mathbb{P}^1\setminus \{0,1,\infty \}\big)\rightarrow G$. The tame fundamental group of $\mathbb{P}^1\setminus \{0,1,\infty \}$ is not explicitly known. We are not aware of a conjecture concerning the structure of this group.

A related interesting case concerns the analogue of the classical complex hypergeometric differential equations. A regular singular stratification $M$ on $\mathbb{P}^1\setminus \{0,1,\infty \}$ is called {\it standard hy\-per\-geometric} if~$M$ has rank~2 and the local exponents at $0$, $1$, $\infty$ are $ 0$, $1-\gamma || 0$, $\gamma -\alpha -\beta || \alpha$,~$\beta $ and $\alpha , \beta , \gamma \in \mathbb{Z}_p$.

The following result is an improvement of \cite[Theorem 8.9]{Ma-vdP1}.
 \begin{Theorem} \label{3.6} The $p$-adic integers $\alpha$, $\beta$, $\gamma$ are written as standard expansions $\alpha =a_0+a_1p+a_2p^2+\cdots$ with all $a_i\in \{0,\dots ,p-1\}$. For $k\geq 1$, write $\alpha_k=a_0+a_1p+\cdots +a_{k-1}p^{k-1}$. Put $\beta =b_0+b_1p+b_2p^2+\cdots$, $\gamma =c_0+c_1p+c_2p^2+\cdots$, and define similarly $\beta_k$ and $\gamma_k$.

The second order $p$-adic hypergeometric differential equation $H$, i.e., $ z(z-1)F''+((\alpha +\beta +1)z-\gamma )F'+\alpha \beta F=0$, reduces to a standard hypergeometric stratification if $\max(\alpha_k,\beta_k)\geq \gamma_k$ holds for $k\gg 0$.

If $\alpha$, $\beta$, $\gamma $ have this property, then the local exponents of $H$ and of its reduction are the same $($up to a shift over integers$)$.
 \end{Theorem}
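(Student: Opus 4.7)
The plan is to realise Theorem~\ref{3.6} as a Dwork-style reduction modulo $p$ of the classical hypergeometric equation. First, I would lift: view $\alpha,\beta,\gamma$ in $\mathbb{Z}_p$, regard $H$ as a rank-two ODE over $\mathbb{Q}_p(z)$, and write its fundamental matrix $F$ near $z=0$ in terms of the Gauss series $y_1={}_2F_1(\alpha,\beta;\gamma;z)$ and a companion solution of the form $z^{1-\gamma}{}_2F_1(\alpha-\gamma+1,\beta-\gamma+1;2-\gamma;z)$, together with the analogous expansions at $z=1,\infty$. The target of this step is to show that, after multiplication by a bounded $p$-adic constant, $F$ and $F^{-1}$ (whose determinant is a power of $z(1-z)$) have entries in a ring $R^\circ$ of power series with $p$-adically bounded coefficients on $\mathbb{P}^1\setminus\{0,1,\infty\}$.

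Second, I would establish this boundedness via the Dwork--Christol criterion applied to each Gauss series appearing in $F$. The coefficient $\tfrac{(\alpha)_n(\beta)_n}{(\gamma)_n\,n!}$ is controlled $p$-adically by partial digit sums of $\alpha,\beta,\gamma$ through Dwork's formula for $v_p$ of a Pochhammer symbol; the condition $\max(\alpha_k,\beta_k)\geq\gamma_k$ is then exactly what forces these coefficients to lie in a fixed $p^{-N}\mathbb{Z}_p$. The finitely many failing $k$ (allowed by the ``$k\gg 0$'' hypothesis) contribute only a uniform denominator, absorbed by rescaling. The same criterion applied to the shifted parameters $(\alpha-\gamma+1,\beta-\gamma+1,2-\gamma)$ handles the companion solution, and the standard $z\mapsto 1-z$ and $z\mapsto 1/z$ substitutions propagate boundedness to the local fundamental matrices at $z=1$ and $z=\infty$.

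Third, I would enrich the picture to the iterative/stratified level. Define $A_n\in M_2(\mathbb{Q}_p(z))$ by $\tfrac{1}{n!}\tfrac{d^n}{dz^n}F=A_nF$. Dwork's key observation is that the operators $\tfrac{1}{n!}\tfrac{d^n}{dz^n}$ preserve $R^\circ$, so once $F$ and $F^{-1}$ lie in $R^\circ$ the whole sequence $\{A_n\}_{n\geq 0}$ does as well; the iterative identities $\partial^{(n)}\!\circ\partial^{(m)}=\binom{n+m}{n}\partial^{(n+m)}$ hold already in characteristic zero for these operators. Reducing every $A_n$ modulo the maximal ideal then produces an iterative differential module over $\overline{\mathbb{F}_p}(z)$ whose restriction to $\mathbb{P}^1\setminus\{0,1,\infty\}$ is, by Lemma~\ref{3.1} applied at each puncture, the desired regular singular stratification. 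The local exponents can be read off the local shape of $F$ at $0$, $1$, $\infty$ and, since reduction modulo $p$ is compatible with the Pochhammer shifts $\alpha\mapsto\alpha+n$, they agree modulo $\mathbb{Z}$ with the characteristic-zero exponents $0,1-\gamma$; $0,\gamma-\alpha-\beta$; $\alpha,\beta$.

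The main obstacle I anticipate is step three: upgrading $p$-adic boundedness of the single fundamental matrix $F$ to $p$-adic boundedness of the entire iterative tower $\{A_n\}$, in particular staying inside $R^\circ$ rather than merely inside $\mathbb{Z}_p[[z]]$ near the punctures. This is precisely where Dwork's stability theorem for bounded analytic functions under $\tfrac{1}{n!}\tfrac{d^n}{dz^n}$, together with the fact that $z(1-z)$ is invertible in $R^\circ$, provides the essential input; verifying it carefully at each of the three singular points is the technical heart of the argument.
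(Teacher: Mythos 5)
Your proposal follows essentially the same route as the paper: reduce the $p$-adic hypergeometric equation modulo $p$ by showing that the coefficients of the two standard solutions at $z=0$ are $p$-adically bounded, use the valuation formula for Pochhammer symbols to translate this boundedness into the digit inequality $\max(\alpha_k,\beta_k)\geq \gamma_k$ for $k\gg 0$, and read the local exponents off the reduced fundamental matrix. The paper delegates your ``step three'' (boundedness of the fundamental matrix implies boundedness of the whole iterative tower $\{A_n\}$, whose reductions are rational functions over $\mathbb{F}_p$) to the proof of Theorem~8.9 of Matzat--van der Put, which is exactly the Dwork-type stability argument you invoke.
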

 \begin{proof} According to the proof of \cite[Theorem~8.9]{Ma-vdP1}, $H$ (in matrix form) reduces to a standard hypergeometric stratification if and only if the set of coefficients of the two formal or symbolic solutions
 \begin{gather*}
 F_1=\sum _n \frac{(\alpha)_n(\beta)_n}{(\gamma)_nn!}z^n \qquad \text{and} \qquad F_2=z^{1-\gamma}(1-z)^{\gamma -\alpha -\beta} \sum _n\frac{(1-\alpha)_n(1-\beta )_n}{(2-\gamma)_nn!}z^n\end{gather*} is $p$-adically bounded.

 The coefficients of these two standard solutions are $\frac{(\alpha)_n(\beta)_n}{(\gamma)_nn!}$ and $\frac{(1-\alpha)_n(1-\beta )_n}{(2-\gamma)_nn!}$.

As usual $(\alpha )_n$ is the Pochhammer symbol. There is a helpful formula for $v_p((\alpha)_n)$, the number of $p$-factors in $(\alpha)_n$. Let $\alpha =a_0+a_1p+a_2p^2+\cdots $. If $a_0\neq 0$, then
\begin{gather*}
v_p((\alpha )_n)=\sum _{k\geq 1}\left[\frac{n-1+a_0+\cdots +a_{k-1}p^{k-1}}{p^k}\right] .\end{gather*}

This formula follows from the observation that for any $k\geq 1$ the number of elements in the sequence $\alpha, \alpha +1,\dots ,\alpha +n-1$ which are divisible by $p^k$ is $\big[\frac{n-1+a_0+\cdots +a_{k-1}p^{k-1}}{p^k}\big]$.

For the case $a_0=0$ one can write $v_p((\alpha)_n)=v_p(\alpha)+v_p((1+\alpha)_{n-1})$. For convenience we suppose that $a_0\neq 0$. We want a $k_0$ such that the inequality
\begin{gather*} \left[\frac{n-1+\alpha _k}{p^k}\right] + \left[\frac{n-1+\beta _k}{p^k}\right] -
\left[\frac{n-1+\gamma _k}{p^k}\right]- \left[\frac{n-1+1}{p^k}\right]\geq 0 \end{gather*}
 holds for $k\geq k_0$ and all $n$. Replace $\frac{n-1}{p^k}$ by $x$, then we want
\begin{gather*} \left[x+\frac{\alpha _k}{p^k}\right] + \left[x+\frac{\beta _k}{p^k}\right] -
\left[x+\frac{\gamma _k}{p^k}\right]- \left[x+\frac{1}{p^k}\right]\geq 0
\end{gather*}
to hold for all (real) $x$. Thus if $\max (\alpha _k,\beta _k)\geq \gamma_k$ for $k\gg 0$ the boundedness of the first set of coefficients holds. The same inequalities for $k\gg 0$ imply the boundedness of the second set of coefficients.

We now sketch a proof of the statement that $H$ and the induced stratification have the same local exponents. We consider the local exponents at $z=0$ and suppose (for convenience) that all coefficients of $F_1$ and $F_2$ are $p$-adically bounded by~1. Write
\begin{gather*} G_1=F_1 \qquad \text{and}\qquad G_2=(1-z)^{\gamma -\alpha -\beta}
 \sum _n\frac{(1-\alpha)_n(1-\beta )_n}{(2-\gamma)_nn!}z^n.\end{gather*}

The matrix differential equation $\frac{{\rm d}}{{\rm d}z} y=Ay$ for $H$ and the derived equations $\frac{1}{n!}\big(\frac{{\rm d}}{{\rm d}z}\big)^ny=A_ny$ are defined by $\frac{1}{n!}\big(\frac{{\rm d}}{{\rm d}z}\big)^n {F_1\ F_2 \choose F_1'\ F_2'}=A_n{F_1\ F_2 \choose F_1' \ F_2'}$. The $A_n$ have their coefficients in $\mathbb{Q}_p(z)$ and by assumption reduce modulo $p$ to elements of $\mathbb{F}_p(z)$. The expressions $G_1$ and $G_2$ can also be reduced modulo $p$ to matrices with entries in~$\mathbb{F}_p[[z]]$.

For any $p$-adic number $\tau \in \mathbb{Z}_p$ we see $z^\tau$ as a symbolic, non-trivial solution over the field $\mathbb{F}_p((z))$ of the iterative equation $\big\{\partial_z^{(n)}y={\tau \choose n }z^{-n}y\, | \, n\geq 0\big\}$.

The matrix ${F_1\ F_2 \choose F_1'\ F_2'}$ can also be reduced modulo $p$, its entries are in $\mathbb{F}_p[[z]][z^{1-\gamma}]$. This reduction is a fundamental matrix for the stratification. It follows at once that the local exponents at $z=0$ of the stratified equation are $0$, $1-\gamma$. \end{proof}

The computation of the $p$-adic valuations of the coefficients of the standard hypergeometric functions is due to F.~Beukers (oral communication). He claims (without proof) that the above inequalities describe completely the set of the standard classical hypergeometric equations ${\rm HG}(\alpha, \beta ,\gamma)$ over~$\mathbb{Q}_p$ which produce a bounded system (of divided equations) $\frac{1}{n!}\big(\frac{{\rm d}}{{\rm d}z}\big)^ny=A_ny$ over $\mathbb{Q}_p$. We note that the reduced matrices have their entries in $\mathbb{F}_p(z)$ and that the stratification is therefore defined over the base field~$\overline{\mathbb{F}}_p$.

Let $T\subset \mathbb{Z}_p^3$ consists of the tuples $(\alpha ,\beta, \gamma )$ such that (using the notation of Theorem~\ref{3.6}) $\max(\alpha_k,\beta_k)\geq \gamma_k$ holds for $k\gg 0$. The set~$T$ is of arithmetic nature and has a positive volume. {\it There are many natural questions}:
\begin{enumerate}\itemsep=0pt
\item[(a)] For the generic situation $(\alpha, \beta ,\gamma )\in T$ and $\alpha$, $\beta$, $\gamma$ algebraically independent over $\mathbb{Q}$, there is no (formal) difference between the $p$-adic hypergeometric equation and the complex one. Therefore the $p$-adic equation has differential Galois group ${\rm GL}_2$ over some $p$-adic field. According to \cite[Section~8]{Ma-vdP1}, the corresponding standard stratified hypergeometric equation has again differential Galois group ${\rm GL}_2$, now over the field $\overline{\mathbb{F}}_p$. What are the differential Galois groups for stratifications coming from non-generic $(\alpha,\beta,\gamma)\in T$?
\item[(b)] Is the standard hypergeometric stratification derived from $(\alpha, \beta ,\gamma )\in T$ the only one with local exponents $ 0$, $1-\gamma || 0$, $\gamma -\alpha -\beta || \alpha$, $\beta $ at $0$, $1$, $\infty$?
\item[(c)] Are there for $(\alpha, \beta, \gamma )\not \in T$ standard hypergeometric stratifications not obtained by reducing $p$-adic hypergeometric equations?
\end{enumerate}

\subsection{Inverse problem for regular singular stratifications}\label{section3.3}
A linear algebraic group $G$ over $C$ will be called {\it tame for $(X,S)$} if $X$ is a curve over $C$ (smooth, projective, irreducible), $S\subset X$ finite and there is a regular singular stratification on $X\setminus S$ with differential Galois group $G$. Equivalently, $G$ is an image of the regular singular, stratified fundamental group $\pi^{\rm str,rs}(X\setminus S,x_0)$.

\begin{Theorem}\label{3.7} Suppose that $G$ is tame for $(X,S)$. Then $G/G^o$ is an image of the tame fundamental group of $X\setminus S$ and $G^o$ is generated by its maximal tori. In particular $\mathbb{G}_a$ is not tame for $(X,S)$.
\end{Theorem}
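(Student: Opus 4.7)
My plan is to treat the two assertions separately.

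For the first, I would apply the component-group functor to the surjection $\pi^{\rm str,rs}(X\setminus S,x_0)\twoheadrightarrow G$ provided by the hypothesis that $G$ is tame for $(X,S)$. Since the image of $\pi^{\rm str,rs}(X\setminus S)^o$ is a connected closed subgroup of $G$, it lies in $G^o$, and passing to component groups yields a surjection $\pi^{\rm str,rs}(X\setminus S)/\pi^{\rm str,rs}(X\setminus S)^o\twoheadrightarrow G/G^o$. The source was identified with $\pi_1^{\rm tame}(X\setminus S,x_0)$ in Section~\ref{section1}.

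For the second, the plan is to argue by contradiction, aiming to exhibit a regular singular stratification with differential Galois group $\mathbb{G}_a$ in contradiction with the consequence of \cite[Corollary~16]{Sa} noted in Section~\ref{section1}. Suppose $G^o$ is not generated by its maximal tori, and let $T\subset G^o$ be the closed subgroup generated by all maximal tori. Conjugation by any element of $G$ permutes maximal tori, so $T$ is normal in $G$; the quotient $G^o/T$ is a nontrivial connected linear algebraic group containing no nontrivial torus, hence unipotent. The subcategory of $\{\{M\}\}$ on which $T$ acts trivially is Tannakian with group $G/T$, so $G/T$ is itself the differential Galois group of a regular singular stratification on $X\setminus S$. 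Replacing $G$ by $G/T$ (which leaves $G/G^o$ unchanged), I may assume $G^o$ is itself nontrivial connected unipotent. By the first assertion there is then a connected tame Galois covering $h\colon \tilde X\to X$ with group $G/G^o$; set $\tilde S=h^{-1}(S)$. Pullback along the tame cover $h$ preserves regular singularity, since tame local extensions $C((t))\subset C((s))$ with $s^e=t$ and $\gcd(e,p)=1$ send regular singular stratifications to regular singular stratifications (recalled at the start of Section~\ref{section3}). By the Tannakian analogue of the Galois correspondence for \'etale covers, the differential Galois group of $h^*M$ on $\tilde X\setminus \tilde S$ is the subgroup of $G$ corresponding to $\tilde X$, namely $G^o$. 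Since $G^o$ is nontrivial connected unipotent, a composition series displays $\mathbb{G}_a$ as a quotient; Tannakian formalism then yields $\mathbb{G}_a$ as the differential Galois group of a regular singular stratification on $\tilde X\setminus \tilde S$, the desired contradiction.

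The main obstacle I anticipate is in the descent step: verifying that pullback along the tame cover $h$ replaces the Tannaka group $G$ by its identity component $G^o$. This is the stratified analogue of the classical fact that monodromy is restricted to the identity component upon passing to the Galois cover classified by the component group, and it relies on the connected-\'etale exact sequence $1\to \pi^{\rm str,rs}(X\setminus S)^o\to \pi^{\rm str,rs}(X\setminus S)\to \pi_1^{\rm tame}(X\setminus S)\to 1$ together with the Tannakian formalism for restricting to a subgroup along a cover.
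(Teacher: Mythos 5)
Your reduction to the case where $G^o$ is a nontrivial connected unipotent group, and the passage to the tame cover so that $\mathbb{G}_a$ becomes the differential Galois group of a regular singular stratification on $\tilde X\setminus\tilde S$, both match the paper and are fine (the paper does the pullback first and then quotients by the subgroup generated by maximal tori, but the order is immaterial). The descent step you worry about is indeed the standard Tannakian fact and is not where the difficulty lies.

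The genuine gap is in the final contradiction. What \cite[Corollary~16]{Sa} actually says is that the maximal unipotent quotient of the stratified fundamental group of a \emph{complete} variety is pro-\'etale; it rules out $\mathbb{G}_a$ as a quotient of $\pi^{\rm str}(Y)$ for $Y$ projective, not of $\pi^{\rm str,rs}(Y\setminus T)$ for $T\neq\varnothing$. The sentence in Section~\ref{section1} asserting that $\mathbb{G}_a$ is never the differential Galois group of a regular singular stratification for any $X$ and any $S$ is exactly the ``in particular'' clause of Theorem~\ref{3.7} itself --- it is a consequence of this theorem, not an independent input --- so invoking it makes your argument circular. The missing step, which is the real content of the paper's proof, is the bridge from the punctured curve to the complete one: for a regular singular stratification $N$ with differential Galois group $\mathbb{G}_a$, the local differential Galois group at each point of $\tilde S$ is on the one hand diagonalizable (a regular singular module over $C((t))$ decomposes as $E(\alpha_1)\oplus\cdots\oplus E(\alpha_n)$, with Galois group a subgroup of $\mathbb{G}_m^n$), and on the other hand embeds into the global group $\mathbb{G}_a$; hence it is trivial, the points of $\tilde S$ are not actually singular, and $N$ extends to a stratification on the complete curve $\tilde X$. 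Only then does \cite[Corollary~16]{Sa} apply and yield the contradiction.
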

\begin{proof} The first statement is obvious. Consider a regular singular stratification $M$ for $(X,S)$ with differential Galois group $G$. Let $f\colon (Y,T)\rightarrow (X,S)$ denote the tame covering defined by~$G/G^o$. The stratification~$f^*M$ on~$(Y,T)$ is again regular singular (see the beginning of Section~\ref{section3} and Lemma~\ref{3.1})
 and has differential Galois group~$G^o$. Let $H\subset G^o$ denote the subgroup generated by the maximal tori of~$G^o$. The group~$H$ is, according to~\cite[Proposition~13.3.10]{Sp}, a~Zariski closed subgroup of $G^o$ and $G^o/H$ is an unipotent group. We have to show that $H=\{1\}$.

{\it Suppose otherwise}, then $H$ has the additive group $\mathbb{G}_a$ as quotient. Thus $\mathbb{G}_a$ is the differential Galois group of a regular singular stratification $N$ on $(Y, T)$. The local differential Galois group for $N$ at $t\in T$ is a subgroup of a torus $\mathbb{G}_m^n$ where $n$ is the rank of $N$. It is also a subgroup of the global differential Galois group $\mathbb{G}_a$ of $N$. Therefore the local differential Galois groups are trivial and the stratification that produces $\mathbb{G}_a$ is regular on $Y$ itself. However, this {\it contradicts} the important result of \cite[Corollary~16]{Sa} which states that the maximal unipotent quotient $\pi _Z^{\rm uni}$ of the stratified fundamental group of any complete variety $Z$ is pro-\'etale. In particular, $\mathbb{G}_a$ cannot be a quotient of $\pi^{\rm str}(Y,y_0)$. \end{proof}

\begin{Examples} For $\big(\mathbb{P}^1,S\big)$ with $\#S=1$ there are no tame groups (except $\{1\}$).

For $\#S=2$ the tame groups are $\operatorname{Diag}(\mathbb{X})$ with $\mathbb{X}$ finitely generated $\mathbb{Z}$-module without $p$-torsion. For $\#S=3$, the example ${\rm GL}_2(C)$ is produced by Theorem~\ref{3.6}.

In the special case $S=\{0,1,\lambda ,\infty\}$, $C$ complete with respect to a non trivial valuation and $0<|\lambda |<1$, there are many examples of tame groups. For instance any linear algebraic group $G$, topologically generated by two elements of finite order prime to $p$, is tame for $\big(\mathbb{P}^1,\{0,1,\lambda ,\infty\}\big)$. Indeed, this follows by combining Proposition~\ref{5.3}(2)(i) with Theorem~\ref{5.1}.
\end{Examples}

 \begin{Observation}\label{3.8} Let $G$ be a linear algebraic group over $C$.
 \begin{enumerate}\itemsep=0pt
\item[(a)] Suppose $C=\overline{\mathbb{F}}_p$. If $G$ is topologically finitely generated then~$G$ is finite.
\item[(b)] Suppose $C\neq \overline{\mathbb{F}}_p$. Then $G$ is topologically finitely generated if and only if $G^o$ is generated by its maximal tori.
\end{enumerate}
\end{Observation}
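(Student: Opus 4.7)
The plan hinges on one structural fact that I would establish first: over the algebraically closed field $C$ of characteristic $p$, every nontrivial connected unipotent group $U$ fails to be topologically finitely generated, because every finitely generated subgroup of $U(C)$ is in fact finite. For $U = \mathbb{G}_a$ this is immediate since $(C,+)$ is an $\mathbb{F}_p$-vector space, so any finite subset spans a finite subgroup. For general $U$ one inducts along a composition series with successive quotients $\mathbb{G}_a$: given $N \lhd U$ with $U/N \cong \mathbb{G}_a$ and $\Gamma \subseteq U(C)$ finitely generated, the image of $\Gamma$ in $\mathbb{G}_a(C)$ is finite, so $\Gamma \cap N(C)$ has finite index in $\Gamma$, is finitely generated by Schreier's lemma, and is finite by induction. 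Since a nontrivial connected algebraic group has positive dimension, any finite subgroup is a proper Zariski-closed subgroup of $U$, so $U$ cannot be topologically finitely generated.

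For part (a), fix topological generators $g_1,\dots,g_k \in G(\overline{\mathbb{F}}_p)$. All their matrix entries lie in some common $\mathbb{F}_{p^N}$, so the abstract subgroup they generate sits inside the finite group $G(\mathbb{F}_{p^N})$. Its Zariski closure is still finite and by hypothesis equals $G$, so $G$ is finite.

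For the forward direction of (b), let $H \subseteq G^o$ denote the closed normal subgroup generated by the maximal tori of $G^o$; by \cite[13.3.10]{Sp} the quotient $G^o/H$ is connected and unipotent. Topological finite generation passes from $G$ to the finite-index closed subgroup $G^o$ — applying Schreier's lemma to any finite Zariski-dense subgroup $\Gamma \subseteq G$ gives a finitely generated $\Gamma \cap G^o$ whose Zariski closure $K \subseteq G^o$, when combined with a set of coset representatives, covers $G = \overline{\Gamma}$, and a dimension count then forces $K = G^o$ — and then descends further to the quotient $G^o/H$. By the preliminary fact this forces $G^o/H = 1$.

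For the backward direction of (b), fix a transcendental element $t \in C$; unique factorization in $\overline{\mathbb{F}}_p[t]$ shows that $t, t-1, \dots, t-(n-1)$ are multiplicatively independent in $C^*$, so $\mathbb{G}_m^n(C)$ is topologically cyclic for every $n$, and since every torus over an algebraically closed field splits, every torus over $C$ is topologically cyclic. By the Noetherianity of the algebraic group $G^o$, finitely many maximal tori $T_1,\dots,T_k$ already generate $G^o$; combining a topological generator for each $T_i$ with a lift of any generating set of the finite group $G/G^o$ yields topological generators of $G$. The main obstacle is the preliminary structural statement about unipotent groups in characteristic $p$; once that is in hand, the remaining arguments reduce to standard Zariski density and finite-index manipulations.
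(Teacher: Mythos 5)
Your argument is essentially complete and correct; note that the paper itself states Observation~\ref{3.8} without any proof at all (it is offered as an observation, the only nearby ingredient being the citation of \cite[Proposition~13.3.10]{Sp} in the proof of Theorem~\ref{3.7}), so there is no argument in the text to compare against, and your write-up supplies exactly the missing justification. Your three ingredients are the right ones: (i) a nontrivial connected unipotent group over an algebraically closed field of characteristic $p$ has only finite finitely generated subgroups (this uses that connected unipotent groups over a perfect field are split, so the induction along a filtration with $\mathbb{G}_a$-quotients plus Schreier's lemma is legitimate), which rules out topological finite generation of any nontrivial connected unipotent quotient; (ii) topological finite generation passes to the finite-index subgroup $G^o$ and to quotients, which combined with (i) and \cite[Proposition~13.3.10]{Sp} gives the forward direction of (b), and the $\mathbb{F}_{p^N}$-rationality argument gives (a); (iii) for the converse of (b), density of a single element in a torus plus Borel's finiteness statement for groups generated by irreducible subvarieties through the identity. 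One small correction is needed in (iii): the elements $t, t-1,\dots,t-(n-1)$ are not pairwise distinct once $n>p$, since the integers $0,\dots,n-1$ collapse modulo $p$ (e.g., $t-p=t$), so they are then certainly not multiplicatively independent. Replace them by $t-c_1,\dots,t-c_n$ for $n$ distinct constants $c_i\in\overline{\mathbb{F}}_p$; the unique-factorization argument in $\overline{\mathbb{F}}_p[t]$ then goes through verbatim, and the rest of your proof stands.
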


 \section{Stratifications on Mumford curves}\label{section4}
 In this section the field $C$ is supposed to be complete with respect to a non trivial valuation and, as before, $C=\overline{C}$ has characteristic $p>0$.

 For a rigid space $X$ over $C$ one defines the rigid sheaf of differential operators $\mathcal{D}_X^{\rm rigid}$ by copying the definitions in the algebraic case, see \cite[Section~16.8.1]{EGA4} and~\cite{Gie}. The only new issue is that the sheaf $\mathcal{D}_X^{\rm rigid}$ is glued from the affinoid case to the general case by the rigid topology on~$X$, instead of the Zariski topology in the algebraic context.

 For the standard multidisk $T_d:=\operatorname{Spm}(C\langle z_1,\dots ,z_d\rangle)$ one verifies as in \cite[Th\'eor\`eme~16.11.2]{EGA4} and~\cite{Gie}, that the algebra of differential operators is a free left $O(T_d)=C\langle z_1,\dots ,z_d\rangle $-module on the basis $\big\{\partial^{\underline{m}}\big\}$. Here $\underline{m}=(m_1,\dots ,m_d)$ and the $C$-linear, continuous action of $\partial^{\underline{m}}$ on $C\langle z_1,\dots , z_d\rangle$ is given by the formula
 \begin{gather*} \partial^{\underline{m}}\big(z_1^{k_1}\cdots z_d^{k_d}\big)= {k_1\choose m_1 }\cdots {k_d\choose m_d} z_1^{k_1-m_1}\cdots z_d^{k_d-m_d}.\end{gather*}
 A rigid stratified bundle on $X$ is a locally free $O_X$-module of finite rank on $X$ provided with a compatible left action by $\mathcal{D}^{\rm rigid}_X$. Let $\operatorname{Strat}^{\rm rigid}(X)$ denote the Tannaka category of the rigid stratified bundles on~$X$.

It is not difficult to verify the following properties:
\begin{enumerate}\itemsep=0pt
\item[(a)] $\mathcal{D}^{\rm rigid}_X$ is a quasi coherent $O_X$-module, locally free of countable type.
\item[(b)] Let $X$ be a smooth algebraic variety over $C$ and let $X^{\rm an}$ denote its analytification. Then $\mathcal{D}_{X^{\rm an}}^{\rm rigid}$ is the analytification of the algebraic sheaf $\mathcal{D}_X$.
\item[(c)] If $X$ is a smooth, projective variety, then ``analytification'' is an equivalence $\operatorname{Strat}(X)\rightarrow \operatorname{Strat}^{\rm rigid}\big(X^{\rm an}\big)$ of Tannakian categories. The key point in the proof is the ``GAGA'' theorem in the rigid context.
\item[(d)] In general, for a smooth variety $X$ over $C$, the functor $\operatorname{Strat}(X)\rightarrow \operatorname{Strat}^{\rm rigid}\big(X^{\rm an}\big)$ need not be full and need not be surjective on objects. For example, for $X=\mathbb{A}^1$ a non trivial stratified module can become trivial in the rigid sense. Moreover, not every rigid stratification comes from an algebraic one. We note that this is similar to the complex case.
\end{enumerate}

\begin{Example}[a non trivial stratification on the unit disk $\operatorname{Spm}(C\langle z\rangle)$]\label{4.1} This deviation from the complex situation has a consequence for stratifications on a Mumford curve, namely part~(2) of Theorem~\ref{4.2}.

The affinoid algebra of functions on the unit disk is $C\langle z\rangle$ and consists of the power series $\sum a_nz^n$ with $\lim |a_n|=0$. Consider $e=\prod\limits_{m\geq 0}\big(1+ c_mz^{p^m}\big)\in C[[z]]\supset C\langle z\rangle$ and such that $1/p \leq |c_m|<1$ for all $m$. We will verify that $\partial^{(n)}e=a_ne$ with all $a_n\in C\langle z\rangle $. The free module $E:=C\langle z\rangle b$ is made into a stratification by $\partial^{(n)}b=a_nb$ for all~$n$. A solution is $f\cdot b$ with $f=\frac{1}{e}$. One can choose the $\{c_m\}$ such that $e$ is transcendental over $C\langle z\rangle$. Then the stratification $E$ is not trivial and its differential Galois group is $\mathbb{G}_m$.

 The verification: For fixed $n$ there is an integer $k>0$ such that
\begin{gather*} \partial^{(n)}e= \left(\partial^{(n) }\prod_{m=0}^{k-1} \big(1+c_mz^{p^m}\big)\right)\cdot \prod_{m\geq k}\big(1+c_mz^{p^m}\big).\end{gather*}
 Thus \begin{gather*} a_n=\frac{\partial^{(n)}e}{e}=\frac{\partial^{(n)}\prod\limits_{m=0}^{k-1} \big(1+c_mz^{p^m}\big)}{\prod\limits_{m=0}^{k-1} \big(1+c_mz^{p^m}\big)}\end{gather*} and this belongs to $C\langle z\rangle$. \end{Example}

A {\it Schottky group $\Delta$} is a finitely generated discontinuous subgroup of ${\rm PGL}_2(C)$ which has no element $\neq 1$ of finite order. The subset $\Omega$ of $\mathbb{P}^1(C)$ consisting of the ordinary points for $\Delta$ is an open rigid subspace. The quotient $X:=\Omega/\Delta$ is (the analytification of) a smooth projective algebraic curve of a certain genus $g\geq 1$. The case $g=1$ corresponds to $\Delta =\big\langle {q\ 0\choose 0\ 1 }\big\rangle$ with $0<|q|<1$, $\Omega =\mathbb{P}^1(C)\setminus \{0,\infty\}$ and~$X$ is the Tate curve.

For $g\geq 2$, the group $\Delta$ is a free, non commutative, group on $g$ generators and $X$ is called a~Mumford curve. See \cite{Ge-vdP} for more details.

Let $\operatorname{Repr}_\Delta$ denote the Tannaka category of the representations of $\Delta$ on finite-dimensional $C$-vector spaces. As before, $\operatorname{Strat}(X)$ denotes the Tannaka category of the stratified bundles on~$X$. The importance for the inverse problem is the following theorem, also present in~\cite{Sa}.

\begin{Theorem}\label{4.2}\quad
\begin{enumerate}\itemsep=0pt
\item[$(1)$] The rigid uniformization $u\colon \Omega \rightarrow X$ induces a fully faithful functor $\mathcal{F}\colon \operatorname{Repr}_\Delta \rightarrow \operatorname{Strat}(X)$ of Tannakian categories.
\item[$(2)$] An object $M$ of $\operatorname{Strat}(X)$ lies in the essential image of $\mathcal{F}$ if and only if the analytifica\-tion~$M^{\rm an}$ of~$M$ is locally trivial for the rigid topology on the analytification $X^{\rm an}$ of~$X$.
\item[$(3)$]Let $\rho\colon \Delta \rightarrow {\rm GL}(V)$ be a representation. The differential Galois group of the stratified bundle $\mathcal{F}(\rho)$ is the Zariski closure of $\rho(\Delta)$ in ${\rm GL}(V)$.
\item[$(3)$]A linear algebraic group $G$ over $C$ is a differential Galois group for $X$ if~$G$ is topologically generated $($for the Zariski topology$)$ by $\leq g$ elements.
\end{enumerate}
\end{Theorem}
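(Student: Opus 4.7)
The four parts form a pipeline, and I would attack them in order, all resting on the same descent picture $u\colon\Omega\to X^{\rm an}$ together with rigid GAGA for the projective $X$ (property~(c) of the excerpt).

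For~(1), given $\rho\colon\Delta\to\operatorname{GL}(V)$ I would equip the trivial rigid bundle $V\times\Omega$ with its canonical trivial stratification and let $\Delta$ act diagonally, by $\rho$ on $V$ and by deck transformations on $\Omega$. Since $\Delta$ acts freely and rigidly properly discontinuously on $\Omega$, this $\Delta$-equivariant stratified bundle descends to a rigid stratified bundle on $X^{\rm an}$, which by rigid GAGA is the analytification of a unique algebraic stratification $\mathcal{F}(\rho)\in\operatorname{Strat}(X)$. To verify fully faithfulness, the usual Tannakian reductions reduce everything to computing $\operatorname{Hom}(\mathbf{1},\mathcal{F}(\rho))$ for arbitrary $\rho$; a horizontal section of $\mathcal{F}(\rho)$ on $X^{\rm an}$ pulls back along $u$ to a $\Delta$-equivariant horizontal section of the trivial stratified bundle $V\times\Omega$, hence to an element of $V^\Delta$, yielding the identification $\operatorname{Hom}_{\operatorname{Strat}(X)}(\mathcal{F}(\rho_1),\mathcal{F}(\rho_2))=\operatorname{Hom}_\Delta(V_1,V_2)$.

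For~(2), the ``only if'' direction is built into the construction, since $u^{*}\mathcal{F}(\rho)^{\rm an}=V\times\Omega$ is globally trivial and $u$ is an analytic étale cover, so $\mathcal{F}(\rho)^{\rm an}$ is locally trivial on $X^{\rm an}$. Conversely, if $M^{\rm an}$ is locally trivial on $X^{\rm an}$, then $u^{*}M^{\rm an}$ is a locally trivial stratification on $\Omega$. The crucial claim is that every locally trivial rigid stratification on $\Omega$ is in fact globally trivial, so that $u^{*}M^{\rm an}\cong V\times\Omega$; the residual $\Delta$-action on horizontal sections then defines a representation $\rho\colon\Delta\to\operatorname{GL}(V)$, and descent gives $M\cong\mathcal{F}(\rho)$. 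Part~(3) is then pure Tannakian formalism: a fully faithful, $\otimes$- and sub-quotient-preserving functor restricts to an equivalence $\{\{\rho\}\}\to\{\{\mathcal{F}(\rho)\}\}$, so the two Tannaka groups coincide, and on the representation side this group is by definition the Zariski closure of $\rho(\Delta)$ in $\operatorname{GL}(V)$. For~(4), recall that $\Delta$ is free of rank $g$ (infinite cyclic when $g=1$). Given $G$ with a faithful embedding $G\subset\operatorname{GL}(V)$ and topological generators $g_1,\dots,g_r\in G$ with $r\leq g$, define $\rho\colon\Delta\to G$ by sending the first $r$ free generators to $g_1,\dots,g_r$ and the remaining generators (if any) to $1$. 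Its image is Zariski dense in $G$, so by~(3) the stratification $\mathcal{F}(\rho)$ has differential Galois group $G$.

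\textbf{Main obstacle.} The one non-routine analytic input is the global triviality of locally trivial rigid stratifications on $\Omega$ used in~(2) — precisely the issue highlighted by Example~\ref{4.1}, which exhibits a nontrivial rigid stratification on the closed unit disk (though one that is not locally trivial for the admissible rigid topology, else it would contradict the simple connectedness of the disk). The argument I envisage is to exhaust $\Omega$ by an admissible union of affinoid subdomains obtained by removing finitely many open disks from $\mathbb{P}^1$ around the limit set of $\Delta$, and to exploit the fact that $\Omega$ is the rigid-analytic universal cover of $X^{\rm an}$ — so that its ``rigid $\pi_1$'' controlling locally trivial stratifications vanishes — in order to glue local horizontal trivializations into a global one. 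This is the rigid analogue of the complex fact that on a simply connected manifold every locally constant sheaf is globally constant, and is the genuinely non-formal step on which the whole theorem rests.
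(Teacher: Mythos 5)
Your proposal matches the paper's proof essentially step for step: the same equivariant trivial bundle on $\Omega$ descended via GAGA for part~(1), the same appeal to $\Omega$ admitting no proper rigid coverings (equivalently, local triviality on $\Omega$ implying global triviality) for part~(2), the Tannakian/algebraic-hull identification for part~(3) (the paper also records an alternative Picard--Vessiot argument via $C(\Omega)^\Delta=C(X)$), and the same free-group count for part~(4). Your closing remarks correctly isolate the one non-formal input, which the paper likewise states without elaboration.
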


\begin{Observations}\label{4.3} \quad
\begin{enumerate}\itemsep=0pt
\item[(1)] Theorem \ref{4.2}(1), is equivalent to the existence of a surjective morphism of affine group schemes $\pi^{\rm str}(X,x_0)\rightarrow (\Delta)^{\rm hull}$. The latter affine group scheme is the algebraic hull of $\Delta$ with respect to the field $C$. This algebraic hull can be described as the projective limit of $\overline{\rho(\Delta)}$, where~$\rho$ runs in the set of representation $\rho\colon \Delta \rightarrow {\rm GL}(V)$ on finite-dimensional vector spaces over~$C$ and $\overline{\rho(\Delta)}$ denotes the Zariski closure of $\rho(\Delta)$.
\item[(2)]In \cite{Sa} a nice expression for the stratified fundamental group of a Tate curve $E$ is derived from Theorem~\ref{4.2}.
\item[(3)]For a curve $X$ over $\mathbb{C}$ of genus $g\geq 1$, the Riemann--Hilbert correspondence produces an equivalence between the representations of the topological fundamental group of $X$ (which has $2g$ generators and one relation) and the category of the (regular) connections on~$X$. For a Mumford curve $X$ over $C$, the rigid fundamental group $\Delta$ of $X$ (which has $g$ generators and no relation) only captures ``half of the expected fundamental group''.
\item[(4)]Let a stratified bundle on $X$ be given. The pull back under $\Omega \rightarrow X$ produces a trivial (i.e., free) rigid bundle with a (rigid) stratification. In general this rigid stratification is not trivial (see Example~\ref{4.1}). This explains part~(2) of Theorem~\ref{4.2} and observation~(3).
\end{enumerate}
\end{Observations}

\begin{proof}We sketch the proof of Theorem~\ref{4.2}. A similar reasoning will be used in the proof of Theorem~\ref{5.1}. Let $\rho\colon \Delta \rightarrow {\rm GL}(V)$ be a homomorphism and $V$ is a finite-dimensional vector space over~$C$. On $\Omega$ we consider the trivial vector bundle $\Omega \times V\rightarrow \Omega$. This is provided with the trivial stratification, i.e., the constant sections $\omega \mapsto (\omega ,v)$ of $\Omega \leftarrow \Omega \times V$ form the solution space of the stratification. Now we define an action of $\Delta$ on $\Omega \times V$ by $\delta (\omega, v)=(\delta (\omega), \rho(\delta )v)$ for all $\delta \in \Delta$. This action commutes with the stratification on $\Omega \times V$. Dividing by $\Delta$ yields a rigid vector bundle $(\Omega \times V)/\Delta \rightarrow \Omega /\Delta =X^{\rm an}$ with a stratification. By GAGA, this rigid stratified bundle is the analytification of a unique algebraic stratification on~$X$. We write $\mathcal{F}(\rho)$ for this stratification on~$X$ and note that $\rho \mapsto \mathcal{F}(\rho)$ is an analogue of the complex Riemann--Hilbert correspondence. It is easy to verify that $\rho \mapsto \mathcal{F}(\rho)$ defines a functor which respects all constructions of linear algebra, and is fully faithful. This proves part~(1).

If $M\in \operatorname{Strat}(X)$ is in the essential image of $\mathcal{F}$, then, by construction, $u^*M$ is a trivial stratification. On the other hand, if the stratification $u^*M$ on $\Omega$ is locally trivial for the rigid topology, then~$u^*M$ is globally trivial since $\Omega$ does not admit proper rigid coverings. This proves part~(2).

Part (3) follows in fact from Observations \ref{4.3}(1) and part (4) is a consequence of part~(3).

A proof of part (3), using Picard--Vessiot theory, is the following. Let $M=\mathcal{F}(\rho)$, then~$u^*M$ is a trivial stratification with, by construction, has a~solution space $V$ and, after fixing a basis of~$M$, has a~fundamental matrix~$F$ with entries in the field $C(\Omega)$ of the meromorphic functions on~$\Omega$. The subfield~$PV$ of~$C(\Omega)$, generated over~$C(X)$ by the entries of~$F$, $\frac{1}{\det F}$ and all their higher derivatives is a Picard--Vessiot field for~$M$, because its field of constants is~$C$. The group $\Delta$ acts on $C(\Omega)$ and on the subfield~$PV$. Its action on the fundamental matrix $F$ is $\delta (F)=F\cdot \rho(\delta)$ for any $\delta \in \Delta$. From $C(\Omega )^\Delta=C(X)$ it follows that $PV^\Delta =C(X)$. This implies that $\rho(\Delta)$ is Zariski dense in the differential Galois group.
\end{proof}

\begin{Examples}\label{4.4} For a Mumford curve of genus~2, the groups ${\rm SL}_2(C)$ and $\big\{{a\ b \choose 0\ 1 }\,|\, a\in C^*,b\in C\big\}$ are differential Galois groups. Choose an $\alpha \in C^*$ which is not a root of unity. The first group is topologically generated by ${\alpha\ 0\choose 0\ \frac{1}{\alpha}}$ and ${1\ 1\choose 0\ 1 } {\alpha\ 0\choose 0\ \frac{1}{\alpha} }{1\ -1\choose 0\ 1 }$. The second group is topologically gene\-ra\-ted by ${\alpha\ 0\choose 0\ 1 }$ and ${\alpha \ 1\choose 0\ 1}$. Apply now Theorem~\ref{4.2}(3). We note in connection with Theorem~\ref{3.6} that {\it the second group is generated by its maximal tori but is not a reductive group}. For a generic curve of genus two one can state the following:

 {\it Suppose $p>2$. $C_0$ denotes the algebraic closure of the field $\mathbb{F}_p(A_1,A_2,A_3)$ where $A_1$, $A_2$, $A_3$ are algebraically independent over~$\mathbb{F}_p$. Let $X_0$ over $C_0$ be ``generic genus two curve'' $y^2=x(x-1)(x-A_1)(x-A_2)(x-A_3)$. Let $G$ be a linear algebraic group,
 topologically generated by two elements.

 There is a stratification on $C\times _{C_0}X_0$ for some $($algebraically closed$)$ field extension $C$ of $C_0$, with differential Galois group $G$.}

 Indeed, since the $A_1$, $A_2$, $A_3$ are algebraically independent there exists a valuation $| \ |$ on~$C_0$ such that $0<|A_1|<1$, $0<|A_2-1|<1$, $|A_3|>1$. Let $C$ be the completion of $C_0$ and write $X=C\times _{C_0}X_0$. From the position of the six ramification points $0$, $1$, $A_1$, $A_2$, $A_3$, $\infty$ in $\mathbb{P}^1(C)$ one can deduce that $X$ is a Mumford curve (see \cite[p.~168]{Ge-vdP}). Then apply, as above, Theorem~\ref{4.2}(3). In general (except for finite groups~$G$0 this stratification over $C$ descends to a field which is countably generated over~$C_0$, but does not descend to~$C_0$ itself.

 For any $g\geq 1$ one can consider the ``generic curve'' $X_0/C_0$ of genus $g$. For a suitable valuation on $C_0$ and its completion $C$, the curve $X=C\times_{C_0}X_0$ is a Mumford curve and any linear algebraic group $G$, topologically generated by $\leq g$ elements, is the differential Galois group for some stratification on~$X$. Compare also Proposition~\ref{1.1}.
 \end{Examples}

\section{Mumford groups and stratified bundles}\label{section5}

As in Section~\ref{section4}, $C$ is an algebraically closed field of characteristic $p>0$, complete with respect to a non trivial valuation. We recall some definitions and facts from \cite[Section~9]{Vo-vdP}. A subgroup $\Gamma \subset {\rm PGL}_2(C)$ is called a {\it Mumford group} if $\Gamma$ is finitely generated, discontinuous and such that $\Omega /\Gamma\cong \mathbb{P}^1$, where $\Omega \subset \mathbb{P}^1(C)$ is the rigid open subset of the ordinary points for $\Gamma$. We exclude the cases: $\Gamma$ is finite and $\Gamma$ has a subgroup of finite index, isomorphic to $\mathbb{Z}$.

According to \cite[Theorem I.3.1]{Ge-vdP}, the group $\Gamma$ contains a normal subgroup $\Delta$ of finite index such that $\Delta$ has no elements $\neq 1$ of finite order. Such a~$\Delta$ is a~Schottky group, free non abelian on $g \geq 2$ generators.

The image $\overline{\omega}\in \mathbb{P}^1$ of a point $\omega \in \Omega$ with non trivial stabilizer $\Gamma_\omega\subset \Gamma$ is called a {\it branch point} for $\Gamma$. If $\overline{\omega _1}=\overline{\omega_2}$, then $\Gamma_{\omega_1}$ and $\Gamma_{\omega_2}$ are conjugated subgroups of $\Gamma$. The branch point $\overline{\omega}$ is called {\it tame} if the order of $\Gamma_\omega$ is prime to $p$. In the tame case $\Gamma_\omega$ is cyclic.

Any representation $\rho\colon \Gamma \rightarrow {\rm GL}(V)$ produces, by the method explained in Section~\ref{section4}, a~stra\-ti\-fication on $\mathbb{P}^1$ with singularities at the branch points. This stratification is regular singular at a branch point if and only if that point is tame. This is in accordance with~\cite{Ki}. We call a~Mumford group {\it tame} if every branch point is tame.

{\it We fix a tame Mumford group $\Gamma$} with branch points $a_1,\dots ,a_r$ and ramification indices $e_1,\dots ,e_r$ and thus $(p, e_1\cdots e_r)=1$. Let $\operatorname{Strat}\big(\mathbb{P}^1,\sum e_i[a_i]\big)$ denote the Tannakian category of the regular singular stratifications on $\mathbb{P}^1\setminus \{a_1,\dots ,a_r\}$ such that, for each $i$, the local exponents at $a_i$ (taken modulo $\mathbb{Z}$) belong to $\frac{1}{e_i}\mathbb{Z}/\mathbb{Z}$.

{\it We fix a normal Schottky subgroup} $\Delta$ of finite index in $\Gamma$ such that the Galois covering $Z:=\Omega/ \Delta \rightarrow \mathbb{P}^1$ with group~$\Gamma/\Delta$ is tamely ramified above $a_1,\dots ,a_r$ with ramification indices $e_1,\dots ,e_r$.

 \begin{Theorem}\label{5.1}\quad
 \begin{enumerate}\itemsep=0pt
\item[$(1)$] The morphism $u\colon \Omega \rightarrow \Omega/\Gamma=\mathbb{P}^1$ induces a fully faithful functor $\mathcal{F}\colon \operatorname{Repr}_\Gamma \rightarrow \operatorname{Strat}\big(\mathbb{P}^1,\allowbreak \sum e_i[a_i]\big)$.
 \item[$(2)$] An object $M$ in $\operatorname{Strat}\big(\mathbb{P}^1,\sum e_i[a_i]\big)$ is in the essential image of $\mathcal{F}$ if and only if the analytification of its pull back on $Z$ is locally (for the rigid topology) a trivial stratification.
 \item[$(3)$] Let $\rho\colon \Gamma \rightarrow {\rm GL}(V)$ be a representation. The differential Galois group of the stratifica\-tion~$\mathcal{F}(\rho)$ is the Zariski closure of the image of $\rho$ in ${\rm GL}(V)$.
 \end{enumerate}
 \end{Theorem}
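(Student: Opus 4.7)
The plan is to adapt the construction of Theorem~\ref{4.2} and control what happens at the branch points using tameness. Given $\rho\colon \Gamma\to \mathrm{GL}(V)$, equip $\Omega\times V$ with the trivial stratification and the diagonal action $\gamma\cdot(\omega,v)=(\gamma\omega,\rho(\gamma)v)$, which commutes with the stratification. Away from the preimage of $\{a_1,\dots,a_r\}$ the group $\Gamma$ acts freely and properly discontinuously, so the quotient is a locally free rigid stratified sheaf which by GAGA on $\mathbb{P}^1$ is the analytification of a unique algebraic stratification on $\mathbb{P}^1\setminus\{a_1,\dots,a_r\}$. The core new issue is to extend this stratification across each branch point and to check the local-exponent condition.

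At $a_i$, choose $\omega\in u^{-1}(a_i)$ with cyclic stabilizer $\Gamma_\omega$ of order $e_i$ prime to $p$. Pick a local parameter $t$ at $\omega$ on which a generator $\sigma$ of $\Gamma_\omega$ acts by $t\mapsto \zeta t$, with $\zeta$ a primitive $e_i$-th root of unity; then $s:=t^{e_i}$ is a local parameter at $a_i$. Since $(p,e_i)=1$, one decomposes $V=\bigoplus_j V_j$ into $\rho(\sigma)$-eigenspaces with eigenvalues $\zeta^{k_j}$, $0\leq k_j<e_i$. A $\Gamma_\omega$-invariant local section of $\Omega\times V_j$ is of the form $t^{k_j}g(s)v_j$ with $g$ analytic, so near $a_i$ the descended sheaf is spanned over $C[[s]]$ by $\{t^{k_j}v_j\}_j$; a direct computation shows this lattice is preserved by $s^n\partial_s^{(n)}$ for all $n$, with local exponents $k_j/e_i\in \tfrac{1}{e_i}\mathbb{Z}/\mathbb{Z}$. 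Thus $\mathcal{F}(\rho)$ lands in $\operatorname{Strat}(\mathbb{P}^1,\sum e_i[a_i])$.

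The functor $\mathcal{F}$ respects the Tannakian structure by construction. Full faithfulness in~(1) reduces, via $\mathrm{Hom}(\rho_1,\rho_2)=\mathrm{Hom}(\mathbf{1},\rho_1^*\otimes\rho_2)$, to the identification $\mathrm{Hom}(\mathbf{1},\mathcal{F}(\rho))=V^\Gamma$: a horizontal global section of $\mathcal{F}(\rho)$ pulls back to a horizontal $\Gamma$-equivariant section of the trivial stratification on $\Omega\times V$, necessarily a constant section $\omega\mapsto(\omega,v)$ with $v\in V^\Gamma$. For part~(2), if $M=\mathcal{F}(\rho)$ then $u^*M$ is trivial, so its restriction to $Z$ is rigid-locally trivial. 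Conversely, if $M|_Z$ is rigid-locally trivial then by Theorem~\ref{4.2}(2) applied to the Schottky group $\Delta$ one has $M|_Z\simeq \mathcal{F}(\sigma)$ for some $\sigma\colon\Delta\to\mathrm{GL}(V)$, and the $\Gamma/\Delta$-descent data of $M$ to $\mathbb{P}^1$ translate into an extension of $\sigma$ to a representation $\rho\colon\Gamma\to\mathrm{GL}(V)$ with $\mathcal{F}(\rho)\simeq M$. Part~(3) follows exactly as in Theorem~\ref{4.2}(3): the fundamental matrix $F$ of $u^*M$ has entries in the meromorphic functions on $\Omega$, the field $PV$ generated over $C(\mathbb{P}^1)$ by the entries of $F$, $1/\det F$ and all higher derivatives is a Picard--Vessiot field, the $\Gamma$-action gives $\gamma(F)=F\cdot\rho(\gamma)$, and $C(\Omega)^\Gamma=C(\mathbb{P}^1)$ implies $PV^{\rho(\Gamma)}=C(\mathbb{P}^1)$, so $\rho(\Gamma)$ is Zariski dense in the differential Galois group.

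The main obstacle is the local analysis at the branch points, specifically verifying that the resulting lattice is stable under \emph{all} $s^n\partial_s^{(n)}$ and not merely under $s\partial_s$. Tameness $(p,e_i)=1$ is essential: it both furnishes the eigenspace decomposition of $\rho(\sigma)$ and ensures the $p$-adic integers $k_j/e_i$ behave well under the higher derivations, so that one really obtains a regular singular rather than an irregular singularity at $a_i$.
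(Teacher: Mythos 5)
Your proposal is correct and follows essentially the same route as the paper: the paper proves Theorem~\ref{5.1} by combining the quotient construction from the proof of Theorem~\ref{4.2} with the local analysis at a tame branch point isolated as Lemma~\ref{5.2}, and your eigenspace decomposition of $V$ under $\rho(\sigma)$ with invariant sections $t^{k_j}g(s)v_j$ and exponents $k_j/e_i$ is precisely the computation in the proof of that lemma. The only cosmetic difference is that you route the converse of part~(2) through Theorem~\ref{4.2}(2) for the Schottky subgroup $\Delta$ and then extend the representation to $\Gamma$ via descent, which is equivalent to the paper's direct use of $\Gamma$-equivariance on $\Omega$.
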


 Consider the situation $C((z))\subset C((t))$ with $z=t^e$ and $(p,e)=1$. Now $G=\langle \sigma \rangle$, with $\sigma (t)=\zeta t$ and $\zeta \in C$ a primitive $e$th root of unity, is the automorphism group of $C((t))/C((z))$. Let $\big\{\partial _z^{(n)}\big\}$ denote the standard higher derivation on~$C((z))$ and $\big\{\partial _t^{(n)}\big\}$ the standard higher derivation on $C((t))$. For the proof of Theorem~\ref{5.1} one need the following result.

 \begin{Lemma}\label{5.2}\samepage\quad
\begin{enumerate}\itemsep=0pt
\item[$(a)$] Let $M$ be a trivial stratified module over $C((t))$ provided with a compatible $G$-action. Then the stratified module $M^G$ over $C((z))$ is regular singular and its exponents are in $\frac{1}{e}\mathbb{Z}/\mathbb{Z}\subset \mathbb{Z}_p/\mathbb{Z}$.
\item[$(b)$]Let $N$ be a regular singular stratified module over $C((z))$ with exponents in $\frac{1}{e}\mathbb{Z}/\mathbb{Z}$. Then $M=C((t))\otimes N$ is a trivial stratified module over $C((t))$ provided with a compatible $G$-action.
\end{enumerate}
 \end{Lemma}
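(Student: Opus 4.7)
The plan is to reduce both parts to an explicit formula for how the unique extension of the higher derivation $\{\partial_z^{(n)}\}$ from $C((z))$ to $C((t))$ acts on integer powers $t^r$. Since $(e,p)=1$, the element $1/e$ lies in $\mathbb{Z}_p$ (reducing to an element of $\mathbb{F}_p\subset C$), and the associated Hasse--Schmidt substitution $\phi_z\colon C((t))\to C((t))[[T]]$ is forced by $\phi_z(z)=z+T$ and $\phi_z(t)^e=z+T$ to be $\phi_z(t)=t(1+T/z)^{1/e}$. Raising to the $r$-th power and reading off Taylor coefficients yields the key formula
\begin{gather*}
\partial_z^{(n)}(t^r)\;=\;{r/e\choose n}\,z^{-n}\,t^r\qquad\text{for all }r\in\mathbb{Z},\ n\geq 0.
\end{gather*}
A companion remark: uniqueness of the extension implies $\sigma\circ\partial_z^{(n)}=\partial_z^{(n)}\circ\sigma$ on $C((t))$ for every $\sigma\in G$, since $\sigma\partial_z^{(n)}\sigma^{-1}$ is another extension of the same higher derivation on $C((z))$.

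For part~(a), the triviality of $M$ gives $M=C((t))\otimes_C V$ with $V=\{m\in M : \partial_t^{(n)}m=0\text{ for all }n\geq 1\}$ of $C$-dimension $\dim_{C((t))}M$. From $\sigma(t)=\zeta t$ one computes $\sigma\partial_t^{(n)}\sigma^{-1}=\zeta^{-n}\partial_t^{(n)}$, so $V$ is $G$-stable; as $|G|=e$ is coprime to $p$, $V=\bigoplus_{i=0}^{e-1}V_i$ with $\sigma$ acting as $\zeta^i$ on $V_i$. Let $\beta_i\in\{0,\dots,e-1\}$ be the representative of $-i\bmod e$; expanding in the basis $V$, an element of $M$ is $G$-invariant iff its coefficients (of the $V_i$-component) lie in $t^{\beta_i}C((z))$, so $\{t^{\beta_i}v : v\in\text{basis of }V_i,\ i=0,\dots,e-1\}$ is a $C((z))$-basis of $M^G$. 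Combining the key formula with $\partial_z^{(n)}(fv)=\partial_z^{(n)}(f)v$ for $v\in V$ (Leibniz plus $\partial_z^{(n)}v=0$ for $n\geq 1$, since $V\subset\ker\mathcal{D}^+$), one computes
\begin{gather*}
\partial_z^{(n)}(t^{\beta_i}v)\;=\;{\beta_i/e\choose n}\,z^{-n}\,t^{\beta_i}v,
\end{gather*}
exhibiting $C((z))\cdot t^{\beta_i}v$ as a copy of $E(\beta_i/e)$. Hence $M^G\cong\bigoplus_i E(\beta_i/e)^{\dim V_i}$ is regular singular with exponents in $\frac{1}{e}\mathbb{Z}/\mathbb{Z}$.

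For part~(b), the structure theorem for regular singular stratifications over $C((z))$ cited earlier in the paper reduces the problem to the one-dimensional case $N=E(r/e)$ with $r\in\mathbb{Z}$. Let $\mathbf{e}$ denote the canonical generator, so that $\partial_z^{(n)}\mathbf{e}={r/e\choose n}z^{-n}\mathbf{e}$. Define $\iota\colon C((t))\otimes_{C((z))}E(r/e)\to C((t))$ by $f\otimes\mathbf{e}\mapsto f\cdot t^r$, regarding the target as the trivial stratification. The key formula gives $\partial_z^{(n)}(t^r)={r/e\choose n}z^{-n}t^r$, matching the action of $\partial_z^{(n)}$ on $\mathbf{e}$ under $\iota$; since the $\partial_z^{(n)}$ together with scalars generate $\mathcal{D}(C((t)))$, $\iota$ is an isomorphism of stratified modules and $C((t))\otimes E(r/e)$ is trivial. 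The natural $G$-action $\sigma(f\otimes n)=\sigma(f)\otimes n$ on $M=C((t))\otimes_{C((z))}N$ commutes with each $\partial_z^{(n)}$ by the companion remark, hence is compatible with the stratification.

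The main obstacle is a careful justification of the key formula: one must verify that $\phi_z(t)=t(1+T/z)^{1/e}$ really defines a $C$-algebra homomorphism $C((t))\to C((t))[[T]]$ whose Taylor coefficients satisfy the axioms of a higher derivation on $C((t))$ extending $\{\partial_z^{(n)}\}$ on $C((z))$, and then invoking uniqueness of such an extension. The tameness assumption $(p,e)=1$ is indispensable: it is what guarantees ${r/e\choose n}\in\mathbb{Z}_p$, and thus provides an unambiguous element of $\mathbb{F}_p\subset C$. Once the key formula is established, both directions of the lemma reduce to bookkeeping with $G$-eigenspaces and the 1-dimensional classification $E(\alpha)$.
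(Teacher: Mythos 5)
Your argument is correct and follows essentially the same route as the paper's proof: decompose the solution space $V$ of the trivial module $M$ into $\sigma$-eigenspaces, observe that $M^G$ is spanned over $C((z))$ by elements $t^{e-i}v$ with $v$ in the $\zeta^i$-eigenspace, and use the formula $\partial_z^{(n)}(t^r)={r/e\choose n}z^{-n}t^r$ to identify the summands as $E(r/e)$, with part (b) reduced to the one-dimensional case in the same way. The only difference is that you make explicit the verification of that formula via $\phi_z(t)=t(1+T/z)^{1/e}$, which the paper takes for granted.
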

\begin{proof} (a) $M$ has the form $C((t))\otimes _CV$ where $V$ is a finite-dimensional $C$-vector space and the higher derivatives $\partial _M^{(k)}$, $k\geq 1$ are zero on $V$. In other words, $V$ is the solution space of $M$. Hence~$V$ is invariant under $G$. One writes~$V$ as a direct sum of $G$-invariant, 1-dimensional spaces and~$M$ is then direct sum of spaces of the form $\tilde{M}:=C((t))m$ with $\partial _M^{(k)}m=0$ for all $k\geq 1$ and $\sigma m=\zeta ^im$ with $1\leq i \leq e$. Then $\tilde{M}^G=C((z))t^{e-i}m$ and $\partial _z^{(k)}(t^{e-i}m)={\frac{e-i}{e}\choose k }z^{-k}t^{e-i}m$. This proves~(a).

(b) It suffices to consider the 1-dimensional case $N=C((z))n$ with $\partial_N^{(k)}n={\alpha \choose k }z^{-k}n$ and $e\cdot \alpha \in \mathbb{Z}$. Then $M:=C((t))\otimes N$ and, for suitable $i$, the basis vector $t^in$ of~$M$ satisfies $\partial_M^{(k)} t^in=0$ for all $k\geq 1$. \end{proof}

A proof of Theorem~\ref{5.1} is obtained by combining Lemma \ref{5.2} with the proof of Theorem~\ref{4.2}.

For the existence of tame Mumford groups one can use the classification of all Mumford groups with 2 or 3 branch points and the fact that a tame Mumford group has no elements of order~$p$ (see~\cite{Vo-vdP}). This leads to

 \begin{Proposition} \label{5.3} \quad
\begin{enumerate}\itemsep=0pt
\item[$(1)$] There is no tame Mumford group with $3$ branch points.
\item[$(2)$] The tame Mumford groups with $4$ branch points are the realizations of:
\begin{enumerate}\itemsep=0pt
\item[$(i)$] $C_\ell \ast C_m$ with $p\,{\not |}\, \ell m$ and $\ell, m >1$ and
\item[$(ii)$] $G_1\ast _{G_3} G_2$ where $G_1,G_2\in \{D_\ell ,A_4,S_4,A_5\}$ have orders not divisible by $p$ and $G_3$ is a~branch group in both $G_1$ and $G_2$.
\end{enumerate}
\item[$(3)$] In case $(i)$, the ramification indices are $(\ell ,\ell , m,m)$.
\end{enumerate}
In case $(ii)$, the group $G_3$ is necessarily a cyclic group of order $\ell$. If $G_1$ and $G_2$ have branch points with ramification indices $(n_1,n_2,\ell)$ and $(m_1,m_2,\ell)$, then $G_1\ast_{G_3}G_2$ has ramification indices $(n_1,n_2,m_1,m_2)$.
\end{Proposition}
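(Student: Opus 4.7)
The proof proceeds by invoking the classification of Mumford groups in \cite{Vo-vdP}, combined with an easy combinatorial translation between the tree-of-finite-groups structure of $\Gamma$ and the branch data of the quotient $\Omega/\Gamma = \mathbb{P}^1$. I would first recall that every element of finite order in $\Gamma$ is conjugate into a vertex group of its Bass--Serre tree, so tameness of $\Gamma$ is equivalent to the statement that every vertex group has order prime to $p$. The tame finite subgroups of ${\rm PGL}_2(C)$ are the cyclic $C_n$, dihedral $D_n$, and exceptional $A_4$, $S_4$, $A_5$, with classical ramification patterns $(n,n)$, $(2,2,n)$, $(2,3,3)$, $(2,3,4)$, $(2,3,5)$ on their respective quotients by the vertex group. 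In the tree-of-groups picture, an edge with non-trivial cyclic edge group of order $\ell$ absorbs one branch of index $\ell$ from each of its two vertex groups, since the two fixed points of this cyclic edge group sit on the axis of the edge and are thereby folded out of the branch locus of $\Omega/\Gamma$. Hence the number of branch points equals $\sum_v (\text{branches of } G_v) - 2\cdot \#\{\text{edges with non-trivial edge group}\}$.

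For part~(1), the list in \cite{Vo-vdP} of Mumford groups with exactly three branch points consists of triangle-type amalgams, and in every entry at least one vertex group has order divisible by $p$. The underlying numerical reason is that the tame per-vertex contributions $2$ (cyclic) and $3$ (dihedral or exceptional), each reduced by $1$ per non-trivial incident edge, simply cannot sum to $3$ over a multi-vertex tree without producing either a single-vertex (hence finite) $\Gamma$ or a combination excluded by the discontinuity requirements of \cite{Vo-vdP}, unless some vertex group acquires an element of order~$p$.

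For parts~(2) and~(3), I would enumerate the tame trees of groups producing four branch points. A single-vertex tree is excluded, since it makes $\Gamma$ finite, contrary to the standing assumption. Two vertices joined by a trivial edge yield $\Gamma = C_\ell * C_m$; the two cyclic generators give two pairs of fixed points, hence indices $(\ell,\ell,m,m)$. Two vertices joined by a non-trivial edge of order $\ell$ force each vertex to contribute three branches (since $3+3-2=4$, while $2+3-2=3$ and $2+2-2=2$), placing both vertices in the list $\{D_\ell, A_4, S_4, A_5\}$; the common edge subgroup $G_3$ must be a cyclic subgroup simultaneously appearing as a ``branch cyclic'' in each $G_i$, and removing the $\ell$-index branch from each vertex's list gives the indices $(n_1, n_2, m_1, m_2)$. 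The main obstacle is ruling out trees with three or more vertices, or mixed-type two-vertex trees (for instance a cyclic vertex amalgamated with a dihedral one across a non-trivial edge): these either exceed four branches or fail the Mumford discontinuity conditions. This exclusion is a finite case-check that follows directly from the combinatorial balance formula above together with the catalogue of Mumford groups in \cite{Vo-vdP}, after which parts~(2) and~(3) follow.
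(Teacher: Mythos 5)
Your proposal follows essentially the same route as the paper's own justification (given in Comments~5.4): the branch-point balance formula over the tree-of-groups decomposition, namely $\operatorname{br}(\Gamma)=3\cdot\#\mathrm{vertices}-2\cdot\#\mathrm{edges}=2+\#\mathrm{vertices}\geq 4$ for irreducible tame $\Gamma$ together with additivity $\operatorname{br}(\Gamma_1\ast\Gamma_2)=\operatorname{br}(\Gamma_1)+\operatorname{br}(\Gamma_2)$ for free products, with the classification, the discontinuity exclusions, and the fact that tame implies no elements of order $p$ all deferred to \cite{Vo-vdP}, exactly as you do. The only content you omit is the paper's independent second argument for part~(1) via good reduction over $\overline{\mathbb{F}_p}$ using \cite{SGA1}, which is not needed for the proof.
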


\begin{Comments}\label{5.4} Part (1) of Proposition~\ref{5.3} can be seen in several ways (see (1) and (2) below).
\begin{enumerate}\itemsep=0pt
\item[(1)] Let $\operatorname{br}(\Gamma)$ denote the number of branch points for $\Gamma$. If $\Gamma$ is irreducible and tame, then, according to \cite[Theorem~3.1 and Section~9.1]{Vo-vdP}, $\Gamma$ has no elements of order $p$ and $\operatorname{br}(\Gamma)=3\cdot \# {\rm vertices}-2\#{\rm edges}=2+\# {\rm vertices}\geq 4$. For a reducible $\Gamma=\Gamma_1\ast \Gamma_2$ one has $\operatorname{br}(\Gamma)=\operatorname{br}(\Gamma_1)+\operatorname{br}(\Gamma_2)\geq 4$. These formulas produce in fact all cases with 4 tame branch points.
\item[(2)] Let the Mumford group $\Gamma$ be tamely ramified over $S\subset \mathbb{P}_C^1$. A Schottky group $\Delta \subset \Gamma$ (normal and of finite index) produces a Galois covering $X\rightarrow \mathbb{P}^1_C$ which is tamely ramified over $S$. If $S$ is defined over the algebraic closure of $\mathbb{F}_p$ in~$C$, then it is well known~\cite[Expos\'e~IX]{SGA1} that $X$ is also defined over the algebraic closure of~$\mathbb{F}_p$. It follows that $X$ has good reduction and cannot be a Mumford curve. This implies part~(1) of Proposition~\ref{5.3} for $S=\{0,1,\infty\}$.
The same argument shows that the 4 branch points in $\mathbb{P}^1(C)$ for the groups in part~(2) of Proposition~\ref{5.3} cannot reduce to~4 distinct points on the projective line over the residue field of~$C$. Therefore the 4 branch points can be put into the position $\{0,1,\infty, \lambda \}$ with $0<|\lambda |<1$.

\item[(3)] Let $\Gamma\subset {\rm PGL}_2(C)$ denote a tame Mumford group and $S\subset \mathbb{P}^1(C)$ its set of branch points. A~linear algebraic group~$G$ over~$C$ will be called {\it $\Gamma$-realizable} if there is a homomorphism $h\colon \Gamma \rightarrow G$ with dense image. By Theorem~\ref{5.1} this implies that there is a regular singular stratification on $\mathbb{P}^1$ with singularities in $S$ and with differential Galois group~$G$.
\end{enumerate}
\end{Comments}

\begin{Corollary}\label{5.5} Let $\Gamma\subset {\rm PGL}_2(C)$ denote a realization of the tame amalgam $C_\ell\ast C_m$ with $p\, {\not |}\, \ell m$ and $(\ell,m)\neq (2,2)$. Let $S=\{0,1,\lambda ,\infty\}$ $(0<|\lambda |<1)$ be the set of its branch points. Let $G$ be any linear algebraic group over $C$ which is topologically finitely generated. Then:
\begin{enumerate}\itemsep=0pt
\item[$(1)$] There is a Galois covering $f\colon X\rightarrow \mathbb{P}^1$, tamely ramified over~$S$, such that $X$ is a Mumford curve and $X$ has a regular stratification with differential Galois group~$G$.
\item[$(2)$] There is a $\Gamma$-realizable group $G_1$ and there are $($reduced$)$ algebraic groups $G_3\subset G_2\subset G_1$ such that~$G_3$ is normal in $G_2$ and $G_2$ is normal and of finite index in~$G_1$ and $G\cong G_2/G_3$.
\end{enumerate}
 \end{Corollary}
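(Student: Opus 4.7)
The plan is to deduce both parts from Theorem~\ref{4.2} and Theorem~\ref{5.1} by exploiting a well-chosen normal Schottky subgroup $\Delta \triangleleft \Gamma$ together with an induced representation.

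First I would fix a torsion-free normal subgroup $\Delta \triangleleft \Gamma$ of finite index; such a $\Delta$ is a Schottky group, free non-abelian of some rank $g\geq 2$, and $X:=\Omega/\Delta$ is a Mumford curve with a Galois covering $X\to X/(\Gamma/\Delta)=\mathbb{P}^1$, tamely ramified over $S$ because every branch stabilizer in $\Gamma$ has order prime to $p$. Since $\Gamma\cong C_\ell\ast C_m$ is residually finite (as an amalgam of finite groups) and admits torsion-free normal subgroups of arbitrarily large index, and since the Euler characteristic / Schreier formula for amalgams gives $g=1+[\Gamma:\Delta]\,(1-\tfrac{1}{\ell}-\tfrac{1}{m})$ with $1-\tfrac{1}{\ell}-\tfrac{1}{m}>0$ under the hypotheses $\ell,m\geq 2$ and $(\ell,m)\neq(2,2)$, I can arrange the genus $g$ to be at least the topological generation number of $G$. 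Then Theorem~\ref{4.2}(3) supplies a representation $\rho\colon\Delta\to \mathrm{GL}(V)$ whose Zariski closure is $G$, and $\mathcal{F}(\rho)\in\operatorname{Strat}(X)$ is a stratification with differential Galois group $G$, proving~(1).

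For part~(2) I form the induced representation $\tilde{\rho}:=\operatorname{Ind}_{\Delta}^{\Gamma}\rho\colon \Gamma\to \mathrm{GL}(W)$ with $W=C[\Gamma/\Delta]\otimes_{C[\Delta]}V$. Define $G_1:=\overline{\tilde{\rho}(\Gamma)}$; this is $\Gamma$-realizable by construction, and by Theorem~\ref{5.1}(3) it is the differential Galois group of the regular singular stratification $\mathcal{F}(\tilde{\rho})$ on $\big(\mathbb{P}^1,S\big)$. Set $G_2:=\overline{\tilde{\rho}(\Delta)}$. Normality and finite index of $\Delta$ in $\Gamma$ transfer to $G_2$ inside $G_1$. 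The restriction $W|_\Delta$ splits (as a $\Delta$-module) as $\bigoplus_{c\in\Gamma/\Delta}{}^{c}V$, where ${}^cV$ denotes $V$ with $\delta$ acting by $\rho(c^{-1}\delta c)$. The projection $\pi$ to the summand corresponding to the trivial coset satisfies $\pi\circ\tilde{\rho}|_\Delta=\rho$, so by continuity of $\pi$ for the Zariski topology, $\pi$ induces a surjective morphism of algebraic groups $G_2\twoheadrightarrow \overline{\rho(\Delta)}=G$. Taking $G_3:=\ker(\pi|_{G_2})$ gives $G_3\triangleleft G_2$ closed and $G_2/G_3\cong G$, as required.

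The main technical obstacle is the rank estimate for torsion-free normal subgroups of $C_\ell\ast C_m$: one needs to know, via Bass--Serre theory or equivalently the action of $\Gamma$ on its tree, that one can realize arbitrarily large Schreier index and that the resulting torsion-free kernel is indeed free of the predicted rank $g$. A secondary point is to check that the projection onto a summand of $\operatorname{Ind}_\Delta^\Gamma V$ actually restricts to a morphism $G_2\to G$ with the correct image; this is essentially automatic once one notes that $\pi$ is $C$-linear and that the identity $\pi\circ\tilde{\rho}|_\Delta=\rho$ holds at the level of abstract groups, so passing to Zariski closures gives the claim. Everything else is a direct application of Theorems~\ref{4.2} and~\ref{5.1}.
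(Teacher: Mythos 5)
Your proposal is correct and follows essentially the same route as the paper: both parts rest on producing a normal Schottky subgroup $\Delta\lhd\Gamma$ of arbitrarily large free rank (you via the Euler-characteristic formula for the amalgam, the paper via Nielsen--Schreier applied to finite-index subgroups of a fixed normal Schottky subgroup $E$), then applying Theorem~\ref{4.2}(3) for part~(1) and taking $G_1$, $G_2$ to be the Zariski closures of the induced representation on $\Gamma$ and $\Delta$, with $G_3$ the kernel of the restriction to the distinguished summand $W_1$, for part~(2). The differences are only in the level of detail, not in the ideas.
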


 \begin{proof} Since $(\ell ,m)\neq (2,2)$, the group $\Gamma$ has a normal Schottky subgroup $E$, which is free on $g\geq 2$ generators. Subgroups $\Delta$ of finite index in $E$ are free on $1+(g-1)[E:\Delta]$ generators. Furthermore~$E$ contains subgroups $\Delta$, normal in $\Gamma$, of arbitrary large finite index. Hence, for every $d\geq 1$, there is a normal Schottky subgroup $\Delta$ of finite index which is free on at least~$d$ generators. For~$d$ large enough there is a homomorphism $\Delta \rightarrow G$ with Zariski dense image. Now apply Theorem~\ref{4.2} to an embedding $G\subset {\rm GL}(W)$. This proves~(1).

 The representation $\phi\colon \Delta \rightarrow G\subset {\rm PGL}(W)$ yields an induced representation $\psi\colon \Gamma \rightarrow {\rm PGL}(V)$ and, with the usual notation, one has $V=\oplus _{\gamma \in \Gamma /\Delta}W_\gamma$. Let $G_1$ be the Zariski closure of $\psi (\Gamma)$ and $G_2$ the Zariski closure of $\psi(\Delta)$. Each subspace $W_\gamma$ is invariant under~$G_2$ and the image of~$G_2$ in ${\rm GL}(W_1)$, for its action on the subspace~$W_1$, coincides with $G$. This proves~(2).

 We note that the pushforward under $f$ of the stratification on $X$ in part (1) is the stratification on $\mathbb{P}^1$ with group $G_1$.
 \end{proof}

\begin{Observations}\label{5.6}\quad
\begin{enumerate}\itemsep=0pt
\item[(1)] One has $C\neq \overline{\mathbb{F}_p}$ since the valuation on $C$ is not trivial. Then, by Observation \ref{3.8}, a~linear algebraic group $G$ is topologically finitely generated if and only if $G^o$ is generated by its maximal tori.
\item[(2)] An analogue of Corollary~\ref{5.5} holds for any tame Mumford group.
\item[(3)] {\it Let $G$ be any finite group}. In the statements of Corollary~\ref{5.5} one can replace the ``big'' field $C$ by the subfield $F:=\overline{\mathbb{F}_p(\lambda)}$. Indeed, any covering $Z\rightarrow \mathbb{P}_C^1$, tamely ramified above $S=\{0,1,\infty ,\lambda\}$ is in fact defined over a~finitely generated $F$-algebra~$R$. After dividing $R$ by a suitable maximal ideal, one obtains a Galois covering with the same group over the projective line over~$F$, tamely ramified over 4 points $0$, $1$, $\infty$, $\lambda$. By Corollary~\ref{5.5}, {\it any finite group $G$ is a subquotient of the tame fundamental group of $\mathbb{P}^1\setminus \{0,1,\lambda ,\infty \}$ if $\lambda \not \in \overline{\mathbb{F}_p}$.}
More precisely: {\it Let $F$ be an algebraically closed field of transcendence degree $\geq 1$ over $\mathbb{F}_p$ and suppose that $S\subset \mathbb{P}^1_F$ is a set of $4$ elements in general position, i.e., a cross-ratio is not in~$\overline{\mathbb{F}_p}$. For any finite group $G$ there are groups $G_3\lhd G_2\lhd G_1$, $G_1$ is a quotient of the tame fundamental group of $\mathbb{P}^1_F\setminus S$ and $G\cong G_2/G_3$.}

\item[(4)] A finite quotient $G$ of $C_\ell \ast C_m$ produces a tame cover of $\mathbb{P}^1$ ramified over $0$, $1$, $\lambda$, $\infty$, defined over the field $\overline{\mathbb{F}_p(\lambda)}$. We give an {\it example} with $p|\ \#G$. Let $q$ be a power of the prime $p$ and suppose that $\ell$ and~$m$ divide $q-1$. Let $a, b\in \mathbb{F}_q^*$ have orders~$\ell$ and~$m$. Let $G\subset {\rm GL}_2(\mathbb{F}_q)$ denote the group generated by the matrices ${a\ 0\choose 0\ 1},{b\ 1\choose 0\ 1 }$. The group $G$ is an image of $C_\ell \ast C_m$ and the commutator of the generators is ${1\ a-1\choose 0\ \ \ 1 }$ and has order $p$.
\end{enumerate}
\end{Observations}

\begin{proof}[The proof of Proposition \ref{1.1}]
(1) It suffices to consider the case $A_0$ is the trivial stratification. With some obvious notations,
 the statement in that case is the bijectivity of the map
\begin{gather*}
C\otimes_{C_0}\big\{b\in B_0\,|\, \partial ^{(n)}b=0 \ \text{for all} \ n\geq 1\big\}\rightarrow \big\{ b\in B \,|\, \partial^{(n)}b=0
\ \text{for all} \ n\geq 1\big\}.
\end{gather*}
Choose a basis $\{\xi_i\}_{i\in I}$ of the $C_0$-linear space $C$. Write $b\in B$ as $\sum _ib_i\xi_i$ with all $b_i\in B_0$. From $\partial^{(n)}b=\sum _i \partial^{(n)}(b_i)\xi_i$ the statement follows.

(2) As usual, the Picard--Vessiot field of $A_0$ is the field of fractions of $C_0(Y_0)\big[\{X_{i,j}\},\frac{1}{\det}\big]/P$, where the operators $\partial ^{(n)}$ are acting like $\big(\partial ^{(n)}X_{i,j}\big)=A_n\cdot (X_{i,j})$ with prescribed matrices $A_n$ having entries in $C_0(Y_0)$ and $P$ is a maximal differential ideal. A Picard--Vessiot field for $A$ is obtained as the field of fractions of $C(Y)\big[\{X_{i,j}\},\frac{1}{\det}\big]/Q$ where $Q$ is a maximal differential ideal containing the ideal $(P)$ generated
 by $P$. It suffices to show that $Q=(P)$.

Suppose that $Q\neq (P)$. Consider a finitely generated $C_0$-algebra $R\subset C$ and the inclusion $\psi\colon R\otimes _{C_0}C_0(Y_0)\big[\{X_{i,j}\}, \frac{1}{\det}\big]\rightarrow C(Y)\big[\{X_{i,j}\},\frac{1}{\det}\big]$. For suitable $R$ the prime ideal $\psi^{-1}Q$ gene\-ra\-tes $Q$. Now $\psi^{-1}Q$ is a differential ideal, strictly larger than the ideal generated by $P$. After dividing $R$ by a suitable maximal ideal one obtains in $C_0(Y_0)\big[\{X_{i,j}\},\frac{1}{\det}\big]$ a differential ideal which is strictly larger than $P$. This contradicts the maximality of $P$.

(3) The abelianized stratified fundamental group of a curve $X_0$ (irreducible, smooth, complete) over~$C_0$ coincides with the stratified fundamental group of its Jacobian variety $\operatorname{Jac}(X_0)$. By \cite[Theorem~21]{Sa} (see also Theorem~\ref{2.4}) the stratified fundamental group of $\operatorname{Jac}(X)$ is much larger than that of $\operatorname{Jac}(X_0)$.

Furthermore Proposition~\ref{2.5} and a combination of Proposition~\ref{2.6} and Corollary~\ref{2.7} yield examples for part~(3) of Proposition~\ref{1.1}.
\end{proof}

\subsection*{Acknowledgements} We thank Andreas Maurischat for pointing out that his thesis contains a negative answer for Question~\ref{Question2}. Many thanks are due to the referees for their work which is the basis for a major revision of the text.

\pdfbookmark[1]{References}{ref}
\LastPageEnding

\end{document}